\newtheorem{theorem}{Theorem}[section]
\newtheorem{corollary}[theorem]{Corollary}
\newtheorem{lemma}[theorem]{Lemma}
\newtheorem{proposition}[theorem]{Proposition}
\theoremstyle{definition}
\newtheorem{definition}[theorem]{Definition}
\newtheorem{remark}[theorem]{Remark}
\newcounter{theoremintro}
\newtheorem{theoremi}[theoremintro]{Theorem}
\newcommand{\interior}{{\rm int}}
\newcommand{\cB}{{\mathscr B}}
\newcommand{\cU}{{\mathscr U}}
\newcommand{\cO}{{\mathscr O}}
\newcommand{\cT}{{\mathscr T}}
\newcommand{\cQ}{{\mathscr Q}}
\newcommand{\cP}{{\mathscr P}}
\newcommand{\cZ}{{\mathscr Z}}
\newcommand{\Zb}{{\mathbb Z}}
\newcommand{\Nb}{{\mathbb N}}
\newcommand{\eps}{\varepsilon}
\newcommand{\unit}{1}
\DeclareMathOperator{\diam}{diam}
\begin{document}

\title{Almost finiteness and the small boundary property}

\author{David Kerr}
\address{David Kerr,
Department of Mathematics,
Mailstop 3368, Texas A\&M University, 
College Station, TX 77843-3368, USA}
\email{kerr@math.tamu.edu}

\author{G{\'a}bor Szab{\'o}}
\address{G{\'a}bor Szab{\'o},
KU Leuven,
Department of Mathematics,  
Celestijnenlaan 200B, 
box 2400,
B-3001 Leuven, 
Belgium}
\email{gabor.szabo@kuleuven.be}

\date{June 10, 2019}

\begin{abstract}
Working within the framework of free actions of countable amenable groups on 
compact metrizable spaces, we show that the small boundary property is
equivalent to a density version of almost finiteness, which we call almost 
finiteness in measure, and that
under this hypothesis the properties of almost finiteness, 
comparison, and $m$-comparison for some nonnegative integer $m$
are all equivalent.
The proof combines an Ornstein--Weiss tiling argument with the use
of zero-dimensional extensions which are measure-isomorphic over singleton fibres.
These kinds of extensions are also employed to show
that if every free action of a given group on a zero-dimensional space
is almost finite then so are all free actions of the group 
on spaces with finite covering dimension. 
Combined with recent results of Downarowicz--Zhang and Conley--Jackson--Marks--Seward--Tucker-Drob
on dynamical tilings and of 
Castillejos--Evington--Tikuisis--White--Winter on the Toms--Winter conjecture, this
implies that crossed product C$^*$-algebras arising from
free minimal actions of groups with local subexponential growth
on finite-dimensional spaces are classifiable in the sense of Elliott's program.
We show furthermore that, for free actions of countably infinite amenable groups,
the small boundary property implies that the crossed product has uniform property $\Gamma$,
which confirms the Toms--Winter conjecture for such crossed products in the minimal case.
\end{abstract}

\maketitle

\tableofcontents

\section{Introduction}

A fundamental feature of the theory of amenability for groups is the connection 
it establishes between approximate invariance and tiling properties. This connection
was made clear in the work of Ornstein and Weiss \cite{OrnWei87}, who showed that the
approximate invariance that characterizes amenability via the F{\o}lner property
can be leveraged through a recursive procedure to produce tilings 
with approximately invariant shapes and almost full coverage, not only within the group itself but also in its 
probability-measure-preserving actions.
In the latter dynamical setting, these tilings (or tower decompositions, to use the more
customary terminology for actions) can be translated in a direct way into matrix 
subalgebras of the von Neumann algebra crossed product and thereby be used to 
produce a simple proof of hyperfiniteness when the action is free.
In particular, this shows that all free ergodic measure-preserving 
actions of countably infinite amenable groups on atomless standard probability spaces
give rise to the unique hyperfinite II$_1$ factor.\footnote{The first proof of this 
fact was given by Connes as an application of his 
celebrated result \cite{Connes76} that injectivity implies hyperfiniteness, 
whose full force is still needed to prove hyperfiniteness of the group 
von Neumann algebra itself.}

The attempt to similarly determine the structure of the C$^*$-crossed products
arising from actions of countably infinite amenable groups on compact
metrizable spaces has, like the general Elliott classification program for simple separable nuclear
C$^*$-algebras, been forced to contend with obstructions of
a topological nature that are conditioned by
the phenomenon of higher-dimensionality. Although the precise technical connections are still not so
well understood, these obstructions at the C$^*$-algebra level
are closely related to Gromov's notion of mean dimension \cite{Gro99}, 
which is an entropy-like invariant in topological dynamics
that provides a measure of asymptotic dimension growth.
Giol and the first author showed in \cite{GioKer2010} that the cubical subshifts 
constructed by Lindenstrauss and Weiss in \cite{LinWei00} as examples of minimal actions
that have nonzero mean dimension also produce crossed products which, for similar structural reasons, 
fail to behave well from the purview of Elliott's program.

Despite the complications in the C$^*$-picture caused by such obstacles,
it has become apparent over the last several years that,
from the viewpoint of both structure and classification theory, hyperfiniteness does
have a veritable analogue in the C$^*$-realm, 
namely the conjunction of nuclearity and $\cZ$-stability. 
For a simple separable unital nuclear C$^*$-algebra $A$,
the property of $\cZ$-stability can be expressed as the existence of 
order-zero completely positive contractive maps $\varphi$ 
from matrix algebras into $A$ such that the image of $\varphi$ is approximately central 
and $1-\varphi (1)$ is small in the sense of Cuntz subequivalence 
\cite{HirOro13}.\footnote{Whether or not this subequivalence is 
itself implemented in an approximately central way
is roughly what separates $\cZ$-stability from its specialization to the nuclear setting.}
The most recent affirmation of this analogy 
is the work of Castillejos, Evington, Tikuisis, White, and Winter 
on the Toms--Winter conjecture \cite{CasEviTikWhiWin18}, 
which shows that $\cZ$-stability can be substituted for
finite nuclear dimension as the regularity hypothesis 
in the C$^*$-counterpart to the classification of 
hyperfinite von Neumann algebras, which itself was only recently clinched in 
\cite{EllGonLinNiu15,GonLinNiu15,TikWhiWin17} after
several decades of effort by many researchers
(see Corollary~D and the accompanying discussion in \cite{TikWhiWin17}).

This parallel between the measure-theoretic and topological worlds
passes moreover to the dynamical level:
in \cite{Ker17} it was shown that 
a topological version of the Ornstein--Weiss tower decomposition called {\it almost finiteness} 
plays the role of $\cZ$-stability in a dynamical version of the Toms--Winter conjecture
and, when the action is free and minimal, implies that the crossed product is $\cZ$-stable.\footnote{Nuclearity is automatic in this case since the acting group is amenable.}
The main difference between almost finiteness and the Ornstein--Weiss decomposition 
is that the smallness of the remainder in the former is expressed using
dynamical subequivalence instead of a probability measure.
While towers and tower decompositions have
long played an important role in unraveling the structure of C$^*$-crossed products,
notably in the case of $\Zb$-actions \cite{Put90,LinPhi04,LinPhi10,TomWin13,EllNiu17}
and in the various theories of dimension for group actions
\cite{GueWilYu17,Sza15,SzaWuZac17,Ker17},
the novelty in the concept of almost finiteness is its dovetailing of topology
with measure-theoretic phenomena within the general setting of amenability. 
Indeed almost finiteness not only relates to the Ornstein--Weiss tower theorem
via analogy but also naturally
calls for the direct application of Ornstein--Weiss tiling methods 
towards C$^*$-algebraic ends, as illustrated by the proof of $\cZ$-stability 
in the free minimal case \cite{Ker17}.
This commingling of the measure-theoretic with the topological 
is very much in line with the kind of C$^*$-application of von Neumann algebra technology
that was pioneered by Matui and Sato in 
their groundbreaking work on the Toms--Winter conjecture \cite{MatSat12,MatSat14}.

One of the main aims of the present paper is to push this technical connection 
between measure and topology further at the dynamical level. 
Our work begins with the observation (Theorem~\ref{T-zero dim})
that the Ornstein--Weiss
tiling argument, as presented in \cite{ConJacKerMarSewTuc18}, applies equally well
to free actions of countable amenable groups on zero-dimensional spaces
so as to produce a disjoint collection of towers with clopen levels and F{\o}lner shapes 
such that the part of the space
that remains uncovered is small in upper density (or, equivalently, uniformly 
small on all invariant Borel probability measures). This motivates the concept
of {\it almost finiteness in measure} for free actions on general compact metrizable spaces,
which asks for the same kind of tower decomposition with remainder of small
upper density but only requires the levels to be open,
as is natural for the purpose of
accommodating spaces of higher dimension (Definition~\ref{D-af in measure}).
Almost finiteness in measure differs from almost finiteness in replacing the sense 
in which the remainder
is required to be small, from a topological one based on dynamical subequivalence
to a measure-theoretic one based on density. In this respect it is similar to 
the topological Rokhlin property for $\Zb^d$-actions introduced by Gutman in \cite{Gut11}.

By leveraging the above topological version of the Ornstein--Weiss tower theorem
via the use of extensions which are measure-isomorphic over singleton fibres,
whose basic theory is developed in Section~\ref{S-extn}, 
we prove in Theorem~\ref{T-af in measure} that, for free actions
on compact metrizable spaces, almost finiteness in measure is equivalent to the small
boundary property, i.e., the existence of a basis of open sets whose boundaries are null
for every invariant Borel probability measure. The small boundary property has its origins
in work of Shub and Weiss that initiated an investigation into the problem of when a given dynamical 
system possesses factors with lower entropy \cite{ShuWei91}.
It is known to imply mean dimension zero, and
is conjecturally equivalent to it under freeness, as well as to 
the decomposability of the action into an inverse limit of finite entropy actions.
The equivalence of these three properties for extensions of minimal $\Zb$-actions was established in 
\cite{LinWei00,Lin99} and was recently shown in \cite{Gut17} to hold for $\Zb$-actions with the marker property
and more generally in \cite{GutLinTsu16} for $\Zb^d$-actions with the marker property.
As a consequence of Theorem~\ref{T-af in measure}, we obtain the following as our first main result:

\begin{theoremi}[Theorem~\ref{T-af comparison}]
Let $G$ be a countably infinite amenable group.
For a free action $G\curvearrowright X$, the following are equivalent:
\begin{enumerate}
\item the action is almost finite,

\item the action has the small boundary property and comparison,

\item the action has the small boundary property and $m$-comparison for some $m\geq 0$.
\end{enumerate}
\end{theoremi}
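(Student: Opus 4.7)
The plan is to establish the cycle (i)$\Rightarrow$(ii)$\Rightarrow$(iii)$\Rightarrow$(i). The implication (ii)$\Rightarrow$(iii) is immediate since comparison is precisely $0$-comparison. For (i)$\Rightarrow$(ii) I would first observe that almost finiteness plainly entails almost finiteness in measure, because dynamical subequivalence of the remainder to a thin tower slice forces the remainder to have small upper density; Theorem~\ref{T-af in measure} then yields the small boundary property. For the comparison half I would appeal to, or reprove along the lines of \cite{Ker17}, the fact that almost finiteness directly implies comparison via a tower-pushing argument: given a strict uniform measure inequality $\mu(A)<\mu(B)$, choose a sufficiently F{\o}lner shape, invoke almost finiteness, and within each column move $A$-fibres into $B$-fibres by finitely many group translates.

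The main content is (iii)$\Rightarrow$(i). Here my plan is to feed the small boundary property into Theorem~\ref{T-af in measure} to obtain almost finiteness in measure, and then to upgrade the measure-smallness of the remainder to the dynamical subequivalence smallness demanded by almost finiteness by invoking $m$-comparison. Concretely, given a finite $K\subseteq G$ and $\epsilon>0$, I would apply almost finiteness in measure at a parameter $\delta$ chosen much smaller than $\epsilon/(m+1)$, obtaining a disjoint collection of open towers with $(K,\delta)$-invariant shapes whose uncovered remainder $R$ has upper density below $\delta$. Within this collection I would carve out an open sub-tower $W$---a union of thin column slices, one per tower, cut using SBP to keep their boundaries negligible for all invariant measures---whose total invariant measure is, by F{\o}lner invariance of the shapes, uniformly at least $(m+1)\delta$. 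This yields $\mu(R)<\mu(W)/(m+1)$ uniformly over invariant $\mu$, whereupon $m$-comparison produces the dynamical subequivalence $R\precsim W$; combined with the tower data this witnesses almost finiteness at tolerance $\epsilon$.

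The hardest part will be the quantitative balancing in this last step: the slice $W$ must be thin enough in the tower structure to meet the smallness specification built into the definition of almost finiteness, yet its invariant measure must exceed $(m+1)\delta$ uniformly across \emph{all} invariant $\mu$ so that $m$-comparison can be triggered. A subsidiary but pivotal point is the passage between open and closed witnesses of invariant measure when invoking $m$-comparison, and this is exactly where SBP is indispensable: it lets the F{\o}lner-derived measure estimates transfer between interiors and closures without loss, converting a uniform bound on upper density into the strict inequality of invariant measures required on both open sides of the subequivalence. Once these balances are correctly set, the assembly into an almost finiteness certificate at tolerance $\epsilon$ is routine.
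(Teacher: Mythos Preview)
Your overall architecture for (iii)$\Rightarrow$(i) --- pass through Theorem~\ref{T-af in measure} to get almost finiteness in measure, then use $m$-comparison to upgrade the density-small remainder to a dynamically subequivalent one --- is exactly the paper's strategy. But there is a genuine gap at the point where you write ``$\mu(R)<\mu(W)/(m+1)$ \dots\ whereupon $m$-comparison produces the dynamical subequivalence $R\precsim W$''. This is not what $m$-comparison gives you. The hypothesis $\mu(R)<\mu(W)$ only yields $R\prec_m W$: a cover of (any closed subset of) $R$ by open sets partitioned into $m+1$ colour classes, each colour class translating disjointly into $W$. The extra factor $m+1$ in your measure inequality does not by itself collapse $\prec_m$ to $\prec$, and almost finiteness requires genuine $\prec$.

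The missing mechanism, which the paper supplies, is structural rather than measure-theoretic. Inside each shape $S_i$ one chooses $m+1$ pairwise disjoint sub-shapes $S_{i,0},\dots,S_{i,m}$ all of the \emph{same} cardinality $\kappa$ (with $(m+1)\kappa < |S_i|/n$), and fixes bijections $\varphi_{i,j}:S_{i,0}\to S_{i,j}$. One applies $m$-comparison only to the single column $B=\bigsqcup_i S_{i,0}V_i$, obtaining $R\prec_m B$. The $(m+1)$ colour classes of the resulting cover are then redirected: the $j$th class, which lands in $\bigsqcup_i S_{i,0}V_i$, is post-composed with the translations $t\mapsto\varphi_{i,j}(t)t^{-1}$ to land instead in $\bigsqcup_i S_{i,j}V_i$. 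Since the columns $S_{i,0}V_i,\dots,S_{i,m}V_i$ are disjoint, the images of all colour classes are now simultaneously disjoint, and one obtains $R\prec\bigsqcup_i(S_{i,0}\sqcup\cdots\sqcup S_{i,m})V_i$. This is the step your plan omits.

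Two minor points: the ``thin column slices'' are unions of tower levels and are automatically open, so no SBP cutting is needed there; and $m$-comparison is formulated for open sets on both sides, so the open/closed passage you flag is not actually an obstacle.
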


\noindent 
This yields the corollary that, for free minimal actions on compact metrizable
spaces of finite covering dimension, finite tower dimension implies almost finiteness
(Corollary~\ref{C-tow dim af}).

The type of extensions employed in the proof of Theorem~\ref{T-af comparison}
are similarly mobilized to show the following reduction theorem.

\begin{theoremi}[Corollary~\ref{cor:findim-almost-finite}]\label{T-B}
Let $G$ be a countably infinite amenable group. 
Suppose that every free action of $G$ on a zero-dimensional
compact metrizable space is almost finite.
Then it follows that every free action of $G$ on a finite-dimensional
compact metrizable space is almost finite.\footnote{In fact, this is more generally proved for actions satisfying the so-called topological small boundary property, which is known to be automatic for free actions on finite-dimensional spaces by Theorem~3.8 of \cite{Sza15}.}
\end{theoremi}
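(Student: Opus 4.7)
The plan is to reduce the question to the zero-dimensional case via a carefully chosen extension. By Theorem~3.8 of \cite{Sza15}, every free action of $G$ on a finite-dimensional compact metrizable space automatically satisfies the topological small boundary property, so it suffices to prove the more general statement indicated in the footnote: under the hypothesis of Theorem~B, every free action $G \curvearrowright X$ with the topological small boundary property is almost finite. The bridge between the finite- and zero-dimensional worlds is supplied by the extension theory developed in Section~\ref{S-extn}: the topological small boundary property should allow the construction of a free zero-dimensional extension $\pi : Y \to X$ which is measure-isomorphic over singleton fibres.

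By hypothesis, $G \curvearrowright Y$ is almost finite, so by Theorem~A it has both the small boundary property and comparison. To conclude almost finiteness of $G \curvearrowright X$, I would verify the same two properties on $X$ and again invoke Theorem~A. The small boundary property descends because topological small boundary implies small boundary; alternatively, the measure-isomorphism over singleton fibres sets up a mass-preserving bijection between $G$-invariant Borel probability measures on $Y$ and on $X$ under which null sets correspond. For comparison, given open $U, V \subseteq X$ with $\mu(U) < \mu(V)$ for every invariant $\mu$, pulling back produces open sets $\pi^{-1}(U)$ and $\pi^{-1}(V)$ in $Y$ satisfying the corresponding strict inequality on $Y$, and comparison on $Y$ then yields $\pi^{-1}(U) \preceq \pi^{-1}(V)$.

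The technical core is then to push the resulting subequivalence on $Y$ back down to one on $X$. A witness consists of open pieces $W_i \subseteq \pi^{-1}(U)$ together with group elements $g_i$ whose translates $g_i W_i$ are pairwise disjoint subsets of $\pi^{-1}(V)$, and the issue is that $\pi(W_i)$ need not be open in $X$. The \textbf{main obstacle} is to repair this defect. Since $\pi$ is bijective off a set that is null for every invariant measure, the strategy is to inner-approximate each $W_i$ by a compact set on which $\pi$ is one-to-one, push these forward to closed subsets of $U$, and then thicken them to open neighbourhoods in $X$. The small overlaps and excess that this introduces are absorbed into the uniform gap $\inf_{\mu}(\mu(V) - \mu(U)) > 0$, which is positive by compactness of the invariant measure simplex. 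Once comparison on $X$ is established, Theorem~A concludes that $G \curvearrowright X$ is almost finite.
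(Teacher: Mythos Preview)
Your overall architecture---reduce to the topological small boundary property via \cite{Sza15}, build a zero-dimensional extension $\pi:Y\to X$ which is measure-isomorphic over singleton fibres, use almost finiteness upstairs, and then push the structure down---matches the paper's. The divergence, and the gap, is in \emph{what} you push down. You try to transfer the abstract comparison property and then appeal to Theorem~A; the paper instead pushes down a specific clopen castle and deals with the remainder directly.

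Your transfer of comparison is not justified as written. Given a closed $C\subseteq U$ and a witness $\{(W_i,g_i)\}$ for $\pi^{-1}(C)\prec\pi^{-1}(V)$ on $Y$, you propose to inner-approximate each $W_i$ by a compact subset of $\pi^{-1}(S_\pi)$, push forward, and thicken to open sets $O_i$ in $X$. Two things break simultaneously: after the inner approximation the images $\pi(K_i)$ need not cover $C$ (you have only removed a measure-zero set upstairs, but covering $C$ is a pointwise, not measure-theoretic, requirement), and after the thickening the translates $g_iO_i$ need not be pairwise disjoint or contained in $V$. The quantity $\inf_\mu(\mu(V)-\mu(U))$ is a measure gap, whereas the defects you need to repair---failure to cover $C$ and failure of disjointness---are topological; there is no mechanism here by which a measure gap ``absorbs'' them. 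In short, the hard part is precisely to turn an $\eps$-approximate subequivalence into an exact one, and nothing in your sketch does that.

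The paper avoids this problem by exploiting a stronger feature of the extension it builds (Theorem~\ref{thm:TSBP-extension}): for disjoint clopen $W_1,W_2\subseteq Y$, the set $\pi(W_1)\cap\pi(W_2)$ is not merely null but \emph{topologically small}. One then takes a clopen castle $\{(V_j,S_j)\}$ partitioning $Y$ (available since $Y$ is zero-dimensional and almost finite), sets $B_j=\pi(V_j)\setminus\pi(Y\setminus V_j)$ to obtain an open castle in $X$, and observes that the remainder is contained in a finite union of sets of the form $\pi(sV_j)\cap\pi(Y\setminus sV_j)$, hence is itself topologically small. A separate lemma (Lemma~\ref{lem:top-small-implies-thin}) shows by induction on the smallness constant that any topologically small closed set is subequivalent to any open set of positive lower density, which finishes the almost finiteness verification directly, with no appeal to abstract comparison on $X$.
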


Recently Downarowicz and Zhang have verified the hypothesis of Theorem~\ref{T-B} 
for groups $G$ whose finitely generated subgroups all have subexponential growth \cite{DowZha17}. 
Conley, Jackson, Marks, Seward, and Tucker-Drob also independently established this fact, in unpublished work,
by observing that it follows from a clopen version of the tiling argument in \cite{ConJacKerMarSewTuc18}
(see the introduction to Section~\ref{S-classification}). 
Together with the $\cZ$-stability theorem from \cite{Ker17}, 
work of Castillejos--Evington--Tikuisis--White--Winter 
\cite{CasEviTikWhiWin18}, and the classification results from \cite{EllGonLinNiu15,GonLinNiu15,TikWhiWin17},
this allows us to deduce our main result about C$^*$-crossed products:

\begin{theoremi}[Theorems~\ref{T-subexp} and \ref{T-classification}]
Let $G$ be a countably infinite group for which all finitely generated
subgroups have subexponential growth.
Then every free action $G\curvearrowright X$ on a compact metrizable finite-dimensional space 
is almost finite. 
As a consequence, the crossed products arising from
minimal such actions are classified 
by the Elliott invariant (ordered $K$-theory paired with tracial states)
and are simple ASH algebras of topological dimension at most 2.
\end{theoremi}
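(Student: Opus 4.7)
The plan is to combine Theorem~B with the tiling theorem of Downarowicz--Zhang to first establish the almost finiteness assertion, and then to invoke the existing C$^*$-algebraic machinery to extract the classification statement.

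First I would show that every free action of $G$ on a compact metrizable finite-dimensional space is almost finite. Since every finitely generated subgroup of $G$ has subexponential growth, $G$ itself is amenable, being the directed union of its finitely generated (hence amenable) subgroups. The main tiling result of Downarowicz--Zhang then produces, for any free action of such a group on a zero-dimensional compact metrizable space, clopen tower decompositions with F{\o}lner shapes and arbitrarily small uncovered remainder, which is precisely what is needed to conclude almost finiteness in that setting (with smallness of the remainder furnished by clopenness together with dynamical subequivalence). This verifies the hypothesis of Theorem~B, and applying that reduction theorem then yields almost finiteness of every free action of $G$ on a finite-dimensional compact metrizable space.

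For the classification statement, I would fix a free minimal such action $G \curvearrowright X$ with $X$ finite-dimensional and set $A = C(X) \rtimes G$. Freeness combined with minimality makes $A$ simple, while amenability of $G$ ensures that $A$ is nuclear and, via Tu's theorem (the action of an amenable group being automatically amenable), satisfies the UCT. By the first half of this theorem together with the $\cZ$-stability theorem from \cite{Ker17}, $A$ is $\cZ$-stable, and hence of finite nuclear dimension by the resolution of the remaining implication of the Toms--Winter conjecture in \cite{CasEviTikWhiWin18}. Thus $A$ is a unital simple separable nuclear $\cZ$-stable C$^*$-algebra satisfying the UCT, and the classification theorem of \cite{EllGonLinNiu15,GonLinNiu15,TikWhiWin17} identifies $A$ up to isomorphism by its Elliott invariant and simultaneously realizes it as a simple ASH algebra of topological dimension at most~$2$.

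The main challenge here is largely one of conceptual packaging rather than genuinely new proof work, since the heavy lifting---almost finiteness on zero-dimensional spaces, the topological-to-measure-theoretic bridge of Theorem~B, the $\cZ$-stability theorem for free minimal almost finite actions, the Toms--Winter input, and the Elliott classification itself---is entirely carried out in results either established earlier in the excerpt or cited from the literature. The role of this final theorem is to thread them together, and the only step that is specific to the growth hypothesis on $G$ is the use of subexponential growth via Downarowicz--Zhang to obtain almost finiteness of free zero-dimensional actions, which is what unlocks everything downstream.
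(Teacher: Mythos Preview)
Your proposal is correct and follows essentially the same route as the paper: Downarowicz--Zhang gives almost finiteness for free zero-dimensional actions of such $G$, Theorem~B (Corollary~\ref{cor:findim-almost-finite}) transfers this to finite-dimensional spaces, and then Theorem~12.4 of \cite{Ker17} yields $\cZ$-stability, \cite{CasEviTikWhiWin18} gives finite nuclear dimension, and the UCT via \cite{Tu99} brings the crossed product into the scope of the classification theorem in \cite{TikWhiWin17}. One small sharpening: in the zero-dimensional case the Downarowicz--Zhang towers actually \emph{partition} $X$, so there is no remainder to account for at all---but this only strengthens your argument.
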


\noindent 
This classification theorem was known in the case of finitely generated groups $G$
having polynomial growth (which is equivalent to virtual nilpotence by Gromov's theorem \cite{Gro81})
as a consequence of \cite{SzaWuZac17,Bar17},
in which it was verified directly that the associated crossed products have finite nuclear dimension. 
In the polynomial growth setting one can also give a simpler proof of almost finiteness,
and hence of $\cZ$-stability, as we explain in Section~\ref{S-classification}.
The more general hypothesis of subexponential growth encompasses groups of
intermediate growth such as the Grigorchuk group \cite{Gri08}.

A companion paper to \cite{CasEviTikWhiWin18} 
shows that the Toms--Winter conjecture holds
under the additional hypothesis that the C$^*$-algebra satisfies a C$^*$-version
of Murray and von Neumann's property $\Gamma$, called 
\emph{uniform property} $\Gamma$ \cite{Casetal18}. 
That is, for nonelementary simple separable unital nuclear C$^*$-algebras
satisfying uniform property $\Gamma$, the properties of finite nuclear dimension,
$\cZ$-stability, and strict comparison are equivalent. Since $\cZ$-stability
implies uniform property $\Gamma$, this yields the implication from $\cZ$-stability to
finite nuclear dimension (the reverse implication 
had been previously established by Winter in \cite{Win12}).
To round out the paper we prove the following about C$^*$-crossed products 
within the general context of amenable acting groups:

\begin{theoremi}[Theorem~\ref{T-Gamma} and Corollary~\ref{C-TW}] 
Let $G$ be a countably infinite amenable group.
Let $X$ be a compact metrizable space and $G\curvearrowright X$ a free action 
with the small boundary property.
Then the crossed product $C(X)\rtimes G$ has uniform property $\Gamma$. 
In particular, the Toms--Winter conjecture holds for such crossed products when the action is also minimal.
\end{theoremi}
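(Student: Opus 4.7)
The plan is to establish uniform property $\Gamma$ by producing, for each $n \geq 2$, each finite subset $F$ of $A := C(X) \rtimes G$, and each $\varepsilon > 0$, pairwise orthogonal positive contractions $e_1,\ldots,e_n \in C(X) \subseteq A$ satisfying
\[
\sup_{\tau \in T(A)} \tau\bigl(\unit - \textstyle\sum_k e_k\bigr) < \varepsilon, \quad \sup_{\tau \in T(A),\, a \in F} \|[a,e_k]\|_{2,\tau} < \varepsilon, \quad \sup_{\tau \in T(A)} |\tau(e_k) - 1/n| < \varepsilon.
\]
Passing to a suitable ultrapower then yields the central projections in the uniform trace quotient of the sequence algebra of $A$ that are demanded by the definition of uniform property $\Gamma$ in~\cite{Casetal18}. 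Because $T(A)$ identifies with the simplex $M_G(X)$ of $G$-invariant Borel probability measures via $\tau_\mu(f u_g) = \delta_{g,e} \int f \, d\mu$, and because elements of $C(X)$ already commute with $C(X)$, the centrality condition reduces to controlling $\|e_k - e_k \circ g^{-1}\|_{L^2(\mu)}$ uniformly over $\mu \in M_G(X)$ and $g$ in a finite set $K \subset G$ encoding the unitary part of $F$.

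To build the $e_k$, I will apply Theorem~\ref{T-af in measure}: the small boundary property implies almost finiteness in measure. Choosing the F\o lner tolerance sufficiently fine with respect to $(K,\varepsilon)$, this yields a finite disjoint collection of open towers $\{(S_j,B_j)\}_{j=1}^\ell$ with $(K,\delta)$-invariant shapes, whose combined levels $sB_j$ (for $j$ varying and $s \in S_j$) are pairwise disjoint open subsets of $X$, and whose complement $R$ satisfies $\mu(R) < \delta$ for every $\mu \in M_G(X)$. The small boundary property lets me further assume that each $B_j$---and hence each level $sB_j$---has boundary null for every invariant measure.

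Next I partition each shape into $n$ nearly equal and nearly $K$-invariant pieces, writing $S_j = \bigsqcup_{k=1}^n S_j^{(k)}$ with $\bigl||S_j^{(k)}| - |S_j|/n\bigr| \leq 1$ and $|gS_j^{(k)} \triangle S_j^{(k)}|/|S_j| < \delta$ for all $g \in K$. Setting $V_k := \bigsqcup_j \bigsqcup_{s \in S_j^{(k)}} sB_j$ produces open sets that partition $X$ up to the small remainder $R$, whose boundaries are null for all invariant measures, and which satisfy $|\mu(V_k) - 1/n| < 2\delta$ and $\mu(gV_k \triangle V_k) < \delta$ uniformly over $\mu \in M_G(X)$ and $g \in K$. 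Urysohn approximation combined with the weak$^*$ compactness of $M_G(X)$ then produces pairwise orthogonal positive contractions $e_k \in C(X)$ supported in $V_k$ that approximate $\mathbf{1}_{V_k}$ in $L^2(\mu)$ uniformly over invariant $\mu$, delivering the desired local data. The Toms--Winter conjecture for minimal actions then follows by combining uniform $\Gamma$ with the main result of~\cite{Casetal18}, Winter's implication from finite nuclear dimension to $\cZ$-stability, and its converse from~\cite{CasEviTikWhiWin18}.

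The main technical obstacle I anticipate is the approximately equivariant partition of each shape in the previous step. Dividing a F\o lner set $S_j$ into $n$ nearly equal pieces each of which remains nearly invariant under a prescribed finite subset of $G$ requires an internal tiling structure for $S_j$: for groups covered by the Downarowicz--Zhang tiling theorem one can tile $S_j$ exactly by smaller F\o lner sets and distribute the tiles evenly among the $n$ classes, but for general countable amenable groups one must absorb the additional tolerance coming from approximate tiling into the earlier estimates, ensuring that the remainder introduced by the secondary tiling does not spoil the uniform tracial estimates on the $V_k$.
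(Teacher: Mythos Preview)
There is a genuine gap in your formulation of uniform property $\Gamma$. The three displayed conditions you aim for --- that $\sum_k e_k$ is tracially close to $1$, that each $e_k$ is approximately central in $\|\cdot\|_{2,\tau}$, and that $\tau(e_k)\approx 1/n$ --- are strictly weaker than what the definition requires. Uniform property $\Gamma$ (both in this paper and in \cite{Casetal18}) demands $|\tau(e_k a) - \tfrac{1}{n}\tau(a)| < \varepsilon$ for every $a$ in the prescribed finite set and every trace $\tau$, not merely for $a = 1$. In the ultrapower picture, having orthogonal projections $p_k \in A^\omega \cap A'$ summing to $1$ with $\tau(p_k) = 1/n$ does \emph{not} force $\tau(p_k a) = \tfrac{1}{n}\tau(a)$: the functional $a\mapsto n\tau(p_k a)$ is a trace on $A$, but nothing makes it coincide with $\tau$ when $T(A)$ is not a singleton. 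Concretely, your sets $V_k$ are unions of tower levels selected only to make $\mu(V_k) \approx 1/n$ and $\mu(gV_k\triangle V_k)$ small; for a given $f \in C(X)$ there is no mechanism forcing $\int_{V_k} f\,d\mu \approx \tfrac{1}{n}\int f\,d\mu$, since the levels assigned to $V_k$ may all sit in a region where $f$ is atypically large or small.

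The paper repairs exactly this point, and the secondary Ornstein--Weiss tiling you anticipate is deployed for this purpose rather than for approximate invariance. One first fixes a regular closed partition $\cP$ of $X$ with $\overline{D}(\partial\cP)=0$ (obtained from a zero-dimensional extension via Theorem~\ref{thm:SBP}) and arranges, using Lemma~\ref{L-levels}, that every level of the castle lies inside the interior of a single member of $\cP$. After tiling each shape $S_i$ by the tiles $T_j$ of Lemma~\ref{L-tiling}, each tile $T_j c$ is assigned its \emph{coloring} $\sigma_c \in \cP^{T_j}$, recording which member of $\cP$ contains each translated level $tcV_i$. Tiles with the same coloring contribute identically to $\mu(A\cap\,\cdot\,)$ for every $A \in \cP$, so distributing the tiles \emph{within each color class} into two halves of equal cardinality yields $\mu(A\cap Z_1) = \mu(A\cap Z_2)$ for all $A\in\cP$ and all $\mu\in M_G(X)$, which is precisely what delivers $|\mu(f_k \unit_A) - \tfrac12\mu(A)| < \varepsilon$. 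An equipartition of shapes by tile count alone, as in your proposal, cannot achieve this. (The paper also obtains approximate centrality in operator norm rather than trace norm, via a Lipschitz weighting $q/Q$ along nested shells $T_{j,q}\subseteq T_j$; this is a genuine methodological difference but not a defect in your approach to centrality.)
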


Our expectation is that the conditions in the Toms--Winter conjecture 
(i.e., finite nuclear dimension, $\cZ$-stability, and strict comparison)
are in fact automatic for crossed products of free minimal actions 
of countably infinite amenable groups
under the assumption of the small boundary property.
This is indeed what happens for $G = \Zb$ by a theorem of Elliott and Niu \cite{EllNiu17}. 
Their argument relies crucially on the recursive topological structure one obtains by generating tower decompositions via recurrence,
which is only available for $\Zb$, and so it appears that new methods 
will be required to tackle the case of $\Zb^2$ and other amenable groups.
\medskip

\noindent{\it Acknowledgements.}
The first author was partially supported by NSF grant DMS-1500593. 
A portion of the work was carried out during his six-month stay in 2017-2018 at the ENS de Lyon, 
during which time he held ENS and CNRS visiting professorships and was supported by Labex MILYON/ANR-10-LABX-0070.
He thanks Damien Gaboriau and Mikael de la Salle at the ENS for their generous hospitality.
The second author was supported by EPSRC grant EP/N00874X/1, the Danish National Research Foundation through the Centre for Symmetry and Deformation (DNRF92), the European Union's Horizon 2020 research and innovation programme under the grant MSCA-IF-2016-746272-SCCD, and a start-up grant of KU Leuven.
Part of the work was carried out during the second author's visit to Texas A\&M in June 2017, and he thanks the first author for his hospitality.
Both authors would like to thank Jianchao Wu for stimulating discussions on the subject of this paper.

\section{General terminology and notation}\label{S-notation}

Throughout the paper $G$ denotes a countable group. We are mainly interested in
the case when $G$ is amenable and will make this a blanket assumption
in Section~\ref{S-af in measure} while explicitly inserting it as a hypothesis
elsewhere when appropriate.

Given a finite set $K\subseteq G$ and a $\delta > 0$, we say that a finite
set $F\subseteq G$ is {\it $(K,\delta )$-invariant} if 
$|KF\Delta F | < \delta |F|$. The existence of $(K,\delta )$-invariant
finite sets for every finite $K\subseteq G$ and $\delta > 0$ is the F{\o}lner characterization
of amenability for $G$.
The notions of upper and lower (Banach) density for a subset $A$ of an amenable $G$, 
written $\overline{D} (A)$ and $\underline{D} (A)$,
are reviewed at the beginning of the next section.

For a compact metrizable space $X$ we write $M(X)$ for the set of Borel probability
measures on $X$ equipped with the weak$^*$ topology, under which it is compact.

A group action $G\curvearrowright X$ will be expressed using the concatenation
$(s,x)\mapsto sx$ except in some instances involving extensions where it will be convenient
to give it a name, such as $\alpha$, with the transformation $x\mapsto sx$ of $X$ for a given $s\in X$ 
sometimes written $\alpha_s$.

Actions on compact metrizable spaces are always assumed to be continuous.
For an action $G\curvearrowright X$ on a compact metrizable space we write $M_G (X)$
for the weak$^*$ closed subset of $M(X)$ consisting of all $G$-invariant Borel probability 
measures on $X$.

Let $\alpha : G\curvearrowright Z$ and $\beta : G\curvearrowright X$ be actions of $G$
on compact metrizable spaces. A map $\pi : Z\to X$ is a {\it factor map} if it is continuous and
surjective and satisfies $\pi (sz) = s\pi (x)$ for all $s\in G$ and $z\in Z$ (equivariance). 
We sometimes write $\pi : (Z,\alpha)\to (X,\beta)$ to indicate the precise roles of the actions.
The map $\pi$ is also called an {\it extension}. One also refers to $\beta$ as a {\it factor} of $\alpha$
and to $\alpha$ as an {\it extension} of $\beta$.

For the remainder of this section we let $G\curvearrowright X$ be a free action on a compact metrizable space. 
A {\it tower} is a pair $(V,S)$ consisting of a subset $V$ of $X$ and a finite subset $S$ of $G$
such that the sets $sV$ for $s\in S$ are pairwise disjoint. The set $V$ is the {\it base} of the tower,
the set $S$ is the {\it shape} of the tower, and the sets $sV$ for $s\in S$ 
are the {\it levels} of the tower.
The tower $(V,S)$ is {\it open} if $V$ is open and {\it clopen} if $V$ is clopen.
A {\it castle} is a finite collection of towers $\{ (V_i , S_i) \}_{i\in I}$
such that the sets $S_i V_i$ for $i\in I$ are pairwise disjoint.
The {\it remainder} of the castle is the set $X\setminus \bigcup_{i\in I} S_i V_i$.
The castle is {\it open} if each of the towers is open, 
and {\it clopen} if each of the towers is clopen.

Let $m\geq 0$ and $A,B\subseteq X$. Write 
$A\prec_m B$ if for every closed set $C\subseteq A$ there is a finite collection $\cU$
of open subsets of $X$ which cover $C$, an $s_U \in G$ for each $U\in\cU$, and a partition 
$\cU = \cU_0 \sqcup\cdots\sqcup \cU_m$ such that for each $i=0,\dots ,m$ the sets 
$s_U U$ for $U\in\cU_i$ are pairwise disjoint and contained in $B$.
When $m=0$ we also write $A\prec B$, in which case we say that $A$ is
{\it subequivalent} to $B$. The action $G\curvearrowright X$ 
has {\it $m$-comparison} if $A\prec_m B$ for all nonempty open sets $A,B\subseteq X$ 
satisfying $\mu (A) < \mu (B)$ for every $\mu\in M_G (X)$.
In the case $m=0$ we also simply say that the action has {\it comparison}.

The action $G\curvearrowright X$ is said to be {\it almost finite} if for every $n\in\Nb$, 
finite set $K\subseteq G$, and $\delta > 0$ there are 
\begin{enumerate}
\item an open castle $\{ (V_i ,S_i ) \}_{i\in I}$ whose shapes are $(K,\delta )$-invariant
and whose levels have diameter less than $\delta$, 

\item sets $S_i' \subseteq S_i$ such that $|S_i' | < |S_i |/n$ and
\[
X\setminus \bigsqcup_{i\in I} S_i V_i \prec \bigsqcup_{i\in I} S_i' V_i .
\]
\end{enumerate}
When $X$ is zero-dimensional this agrees with Matui's original notion of almost finiteness 
for second countable {\'e}tale groupoids with compact zero-dimensional unit spaces \cite{Mat12}.
In this case one can equivalently require that the levels of the tower
be clopen and partition $X$, so that condition (ii) can be dispensed with.
See Section~10 of \cite{Ker17}.

\section{Almost finiteness in measure and a topological Ornstein--Weiss tower theorem}\label{S-af in measure}

The proof of the Ornstein--Weiss tower theorem as presented in \cite{ConJacKerMarSewTuc18} shows,
by substituting clopenness for measurability, that every 
free action of an amenable group on a zero-dimensional compact metrizable space is 
almost finite in measure (Definition~\ref{D-af in measure}). 
For the convenience of the reader we will briefly reconstruct the argument 
as it applies to establish the clopen version, which we record 
as Theorem~\ref{T-zero dim}.
Along the way we will also establish a couple of facts (Propositions~\ref{P-D sup} and \ref{P-portmanteau})
that will be useful in later sections.

Throughout the section we assume $G$ to be amenable.

Let $G\curvearrowright X$ be an arbitrary action on a set.

\begin{definition}\label{D-density}
For a set $A\subseteq X$ we define
\begin{align*}
\underline{D} (A) = \sup_F \inf_{x\in X} \frac{1}{|F|} \sum_{s\in F} 1_A(sx)
\hspace*{7mm} \text{and} \hspace*{7mm}
\overline{D} (A) = \inf_F \sup_{x\in X} \frac{1}{|F|} \sum_{s\in F} 1_A(sx)
\end{align*}
where $F$ ranges over the nonempty finite subsets of $G$ in both cases.
These two quantities are called the {\it lower} and {\it upper (Banach) densities} of $A$.
\end{definition}

Observe that $\underline{D} (A) = 1 - \overline{D} (X\setminus A)$
and $\underline{D} (sA) = \underline{D} (A)$ for all $s\in G$.

The following is a dynamical version of Lemma~2.9 in \cite{DowHucZha16} and can be established
by the same argument.

\begin{lemma}\label{L-density Folner}
Let $(F_n )_n$ be a F{\o}lner sequence for $G$. Then 
\begin{align*}
\underline{D} (A) = \lim_{n\to\infty} \,\inf_{x\in X} \frac{1}{|F_n|} \sum_{s\in F_n} 1_A(sx)
\hspace*{7mm} \text{and} \hspace*{7mm}
\overline{D} (A) = \lim_{n\to\infty} \,\sup_{x\in X} \frac{1}{|F_n |} \sum_{s\in F_n} 1_A(sx).
\end{align*}
\end{lemma}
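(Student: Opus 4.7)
The plan is to transpose the argument of Lemma~2.9 in \cite{DowHucZha16} to the dynamical setting, using the standard double-counting trick that replaces F{\o}lner approximate invariance of a single set by that of the averaging functions $x \mapsto \frac{1}{|F|}\sum_{s\in F} 1_A(sx)$. Since $\underline{D}(A) = 1 - \overline{D}(X \setminus A)$ and the analogous identity
\[
\inf_{x \in X} \frac{1}{|F_n|} \sum_{s \in F_n} 1_A(sx) \;=\; 1 - \sup_{x \in X} \frac{1}{|F_n|} \sum_{s \in F_n} 1_{X \setminus A}(sx)
\]
holds for each $n$, the two claimed equalities are equivalent via $A \leftrightarrow X \setminus A$, so I will only treat the one for $\underline{D}$. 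Set $\alpha := \underline{D}(A)$ and $\alpha_n := \inf_{x \in X}\frac{1}{|F_n|} \sum_{s\in F_n} 1_A(sx)$.

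One inequality is immediate: since $\underline{D}(A)$ is a supremum over all nonempty finite subsets of $G$, in particular $\alpha \geq \alpha_n$ for every $n$, so $\alpha \geq \limsup_n \alpha_n$.

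For the reverse inequality, fix $\varepsilon > 0$ and pick, by definition of $\alpha$, a finite set $F \subseteq G$ such that $\inf_{y \in X}\frac{1}{|F|}\sum_{s \in F} 1_A(sy) > \alpha - \varepsilon$. For any $x \in X$, apply this bound with $y = tx$ for each $t \in F_n$ and sum over $t$; switching the order of summation turns $\sum_{t \in F_n}\sum_{s \in F} 1_A(stx)$ into $\sum_{g \in FF_n} N(g) 1_A(gx)$ with multiplicity $N(g) := |\{(s,t) \in F \times F_n : st = g\}| \leq |F|$. Splitting $FF_n$ as $F_n \sqcup (FF_n \setminus F_n)$, bounding the second piece trivially, and dividing through by $|F|$ yields
\[
(\alpha - \varepsilon)|F_n| \;\leq\; \sum_{g \in F_n} 1_A(gx) \;+\; |FF_n \setminus F_n|.
\]
Dividing by $|F_n|$, taking the infimum over $x \in X$, and invoking the F{\o}lner property to force $|FF_n \setminus F_n|/|F_n| < \varepsilon$ for all sufficiently large $n$, one obtains $\alpha_n > \alpha - 2\varepsilon$ for such $n$. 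Since $\varepsilon > 0$ was arbitrary, $\liminf_n \alpha_n \geq \alpha$, completing the proof.

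No step presents a genuine obstacle. The only point requiring care is the multiplicity $N(g)$ that appears when interchanging summations, which is handled uniformly by the bound $N(g) \leq |F|$ in combination with the F{\o}lner estimate on $|FF_n \setminus F_n|$ that absorbs the boundary contribution.
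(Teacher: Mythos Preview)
Your proof is correct and follows exactly the approach the paper intends: the paper does not give a proof but defers to Lemma~2.9 of \cite{DowHucZha16}, and what you have written is precisely the dynamical transcription of that argument via the double-counting/multiplicity trick. There is nothing to add.
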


\begin{proposition}\label{P-D sup} 
Let $G\curvearrowright X$ be a continuous action on a compact metrizable space
and $A$ a subset of $X$. If $A$ is closed, then 
\begin{align*}
\overline{D} (A) = \sup_{\mu\in M_G (X)} \mu (A) ,
\end{align*}
while if $A$ is open then 
\begin{align*}
\underline{D} (A) = \inf_{\mu\in M_G (X)} \mu (A) .
\end{align*}
\end{proposition}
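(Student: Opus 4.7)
The plan is to reduce the two statements to a single one and then execute a standard empirical-measure argument.

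First, I observe that the two claims are equivalent by duality. If $A$ is open then $X \setminus A$ is closed, and from the identity $\underline{D}(A) = 1 - \overline{D}(X \setminus A)$ noted just after Definition~\ref{D-density}, combined with $\mu(X\setminus A) = 1 - \mu(A)$ for any $\mu \in M_G(X)$, we obtain
\[
\underline{D}(A) = 1 - \overline{D}(X\setminus A) = 1 - \sup_{\mu \in M_G(X)} \mu(X\setminus A) = \inf_{\mu \in M_G(X)} \mu(A).
\]
So it suffices to prove the closed case.

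For the inequality $\overline{D}(A) \geq \sup_{\mu \in M_G(X)} \mu(A)$, I would fix $\mu \in M_G(X)$ and any nonempty finite $F \subseteq G$. By $G$-invariance of $\mu$,
\[
\int_X \frac{1}{|F|}\sum_{s\in F} 1_A(sx)\, d\mu(x) = \frac{1}{|F|}\sum_{s\in F} \mu(s^{-1}A) = \mu(A),
\]
so the supremum over $x \in X$ of the integrand is at least $\mu(A)$. Taking the infimum over $F$ gives $\overline{D}(A) \geq \mu(A)$, and then the supremum over $\mu$.

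For the reverse inequality, I would invoke Lemma~\ref{L-density Folner} with a F{\o}lner sequence $(F_n)_n$. Since $A$ is closed, each indicator $1_A$ is upper semicontinuous, hence so is the function $f_n(x) := |F_n|^{-1}\sum_{s\in F_n} 1_A(sx)$, and therefore $f_n$ attains its supremum at some point $x_n \in X$. Form the empirical measures
\[
\mu_n = \frac{1}{|F_n|}\sum_{s\in F_n} \delta_{sx_n} \in M(X),
\]
so that $\mu_n(A) = f_n(x_n) = \sup_{x\in X} f_n(x)$. By compactness of $M(X)$ in the weak$^*$ topology, after passing to a subsequence $\mu_n \to \mu$ for some $\mu \in M(X)$. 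A standard F{\o}lner estimate shows that for each $g \in C(X)$ and $t \in G$,
\[
|\mu_n(g\circ\alpha_t) - \mu_n(g)| \leq \frac{|tF_n \Delta F_n|}{|F_n|}\,\|g\|_\infty \to 0,
\]
so $\mu$ is $G$-invariant. Since $A$ is closed, the portmanteau theorem gives $\limsup_n \mu_n(A) \leq \mu(A)$, whence
\[
\overline{D}(A) = \lim_n f_n(x_n) = \lim_n \mu_n(A) \leq \mu(A) \leq \sup_{\mu' \in M_G(X)} \mu'(A).
\]

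The main technical point is the upper semicontinuity of $f_n$ (which is what secures a maximizer $x_n$) together with the portmanteau inequality, both tied to the hypothesis that $A$ is closed; the rest is a routine Krylov--Bogolyubov computation. No single step is hard, but the proof genuinely uses closedness twice, which is why the two cases of the proposition split cleanly along the open/closed dichotomy rather than being handled simultaneously.
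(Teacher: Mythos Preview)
Your proof is correct and follows essentially the same route as the paper: reduce to the closed case via $\underline{D}(A)=1-\overline{D}(X\setminus A)$, get $\mu(A)\le\overline{D}(A)$ by averaging over a finite $F$, and for the other direction form empirical measures along a F{\o}lner sequence, extract a weak$^*$ limit (the paper uses a cluster point, you pass to a subsequence), check $G$-invariance, and apply the portmanteau inequality for closed sets. The only cosmetic difference is that you invoke upper semicontinuity of $f_n$ to secure an actual maximizer $x_n$, whereas the paper simply asserts the existence of points $x_n$ realizing the limit in Lemma~\ref{L-density Folner}; either way works.
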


\begin{proof}
By the observation following Definition~\ref{D-density}, we need only prove the first assertion.
Suppose then that $A$ is closed.
By Lemma~\ref{L-density Folner} there are a F{\o}lner sequence $(F_n )_n$ for $G$ and
a sequence $\{ x_n \}$ in $X$ such that 
\begin{align*}
\overline{D} (A) = \lim_{n\to\infty} \frac{1}{|F_n |} \sum_{t\in F_n} 1_A(tx_n).
\end{align*}
Take a weak$^*$ cluster point $\mu$ of the sequence $\mu_n := |F_n |^{-1} \sum_{t\in F_n} \delta_{tx_n}$
in $M(X)$, where $\delta_{tx_n}$ denotes the point mass at $tx_n$. Then for every $f\in C(X)$
and $s\in G$ we have, writing $sf$ for the function $x\mapsto f(s^{-1} x)$
and abbreviating integration with respect to $\mu$ and $\mu_n$ as $\mu (\cdot )$ and $\mu_n (\cdot )$,
\begin{align*}
|\mu (sf) - \mu (f)| 
&\leq | \mu (sf) - \mu_n (sf) | + | \mu_n (sf) - \mu_n (f) | + | \mu_n (f) - \mu (f)| \\
&\leq | \mu (sf) - \mu_n (sf) | +  \frac{|s^{-1} F_n \Delta F_n|}{|F_n|} \| f \| + | \mu_n (f) - \mu (f)| \\
&\to 0
\end{align*}
as $n\to\infty$, showing that $\mu$ is $G$-invariant. Moreover, the portmanteau theorem yields
\begin{align*}
\mu (A) 
\geq \limsup_{n\to\infty} \mu_n (A) 
= \lim_{n\to\infty} \frac{1}{|F_n|} \sum_{t\in F_n} 1_A(tx_n)
= \overline{D} (A) 
\end{align*}
and hence $\overline{D} (A) \leq \sup_{\mu\in M_G (X)} \mu (A)$.

For the reverse inequality, it suffices to observe that for every $\mu\in M_G (X)$
and nonempty finite set $F\subseteq G$ we have 
\begin{align*}
\mu (A) 
= \frac{1}{|F|} \sum_{t\in F} \int_X 1_{t^{-1}A} \, d\mu 
&= \int_X \frac{1}{|F|} \sum_{t\in F} 1_{t^{-1} A} \, d\mu \\
&\leq \bigg\| \frac{1}{|F|} \sum_{t\in F} 1_{t^{-1} A} \bigg\|
= \sup_{x\in X} \frac{1}{|F|} \sum_{t\in F} 1_A(tx).
\end{align*}
so that $\mu (A) \leq \overline{D} (A)$.
\end{proof}

\begin{proposition}\label{P-portmanteau}
Let $X$ be a compact metrizable space with compatible metric $d$ and
let $A$ be a closed subset of $X$. Then for every
$\eps > 0$ there exists a $\delta > 0$ such that the set
\[
A_+ = \{ x\in X : d(x,A) \leq \delta \}
\]
satisfies $\overline{D} (A_+ ) \leq \overline{D} (A) + \eps$.
\end{proposition}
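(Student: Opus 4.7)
The plan is to use Proposition~\ref{P-D sup} to reduce the statement to a question about invariant measures, and then exploit the weak$^*$ compactness of $M_G(X)$ to upgrade pointwise continuity to a uniform estimate. Note first that $A_+$ is closed, being the preimage of $[0,\delta]$ under the continuous function $x\mapsto d(x,A)$. Hence Proposition~\ref{P-D sup} gives both
\[
\overline{D}(A_+) = \sup_{\mu\in M_G(X)} \mu(A_+) \quad\text{and}\quad \overline{D}(A) = \sup_{\mu\in M_G(X)} \mu(A).
\]

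I would then argue by contradiction: suppose there exists $\eps > 0$ such that for every $n\in\Nb$ the closed set $A_n := \{x : d(x,A)\leq 1/n\}$ satisfies $\overline{D}(A_n) > \overline{D}(A) + \eps$, and choose $\mu_n\in M_G(X)$ with $\mu_n(A_n) > \overline{D}(A) + \eps$. Since $M_G(X)$ is weak$^*$ compact, we can pass to a subsequence and assume $\mu_n \to \mu$ weak$^*$ with $\mu\in M_G(X)$.

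Now fix an arbitrary $\delta > 0$, and let $B_\delta = \{x : d(x,A)\leq\delta\}$, which is closed. For all sufficiently large $n$ we have $1/n < \delta$, so $A_n \subseteq B_\delta$, and hence $\mu_n(B_\delta) \geq \mu_n(A_n) > \overline{D}(A) + \eps$. The portmanteau theorem, applied to the closed set $B_\delta$, yields
\[
\mu(B_\delta) \;\geq\; \limsup_{n\to\infty} \mu_n(B_\delta) \;\geq\; \overline{D}(A) + \eps.
\]
Since $A$ is closed, the sets $B_\delta$ decrease to $A$ as $\delta\to 0$, so by continuity from above $\mu(A) = \lim_{\delta\to 0} \mu(B_\delta) \geq \overline{D}(A) + \eps$. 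But $\mu\in M_G(X)$ forces $\mu(A) \leq \overline{D}(A)$, a contradiction.

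The only subtlety — the ``main obstacle'', such as it is — is that one cannot simply invoke continuity of a single measure from above, since the supremum over $\mu\in M_G(X)$ need not be attained by a common measure for all $\delta$; the compactness of $M_G(X)$ together with the closedness of each $B_\delta$ (allowing the $\limsup$ direction of the portmanteau theorem) is precisely what converts the family of near-maximizing measures $\mu_n$ into one fixed invariant measure witnessing the contradiction.
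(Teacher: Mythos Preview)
Your proof is correct and follows essentially the same route as the paper's own argument: both proceed by contradiction, select near-maximizing invariant measures $\mu_n$ for the shrinking neighbourhoods $A_n$, extract a weak$^*$ limit via compactness of $M_G(X)$, apply the portmanteau inequality on a fixed closed neighbourhood, and then use continuity from above to force $\mu(A) > \overline{D}(A)$. The only cosmetic difference is that the paper indexes the fixed neighbourhood by $A_{n_j}$ along the subsequence rather than by an arbitrary $\delta>0$, and carries an $\eps/2$ where you carry $\eps$.
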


\begin{proof}
Suppose to the contrary that there is an $\eps > 0$ for which no suitable $\delta$ exists. 
Then by Proposition~\ref{P-D sup} we can find, for every $n\in\Nb$,
a $\mu_n \in M_G(X)$ such that the set
\[
A_n = \{ x\in X : d(x,A) \leq 1/n \}
\]
satisfies $\mu_n (A_n ) > \overline{D} (A) + \eps/2$. 
Since $M_G(X)$ is compact in the weak$^*$ topology there is a 
subsequence $(\mu_{n_k} )_k$ of $(\mu_n )_n$ which weak$^*$ converges to some $\mu\in M_G(X)$.
For integers $k\geq j\geq 1$ we have
\[
\mu_{n_k} (A_{n_j} ) \geq \mu_{n_k} (A_{n_k} ) > \overline{D} (A) + \eps/2 ,
\]
and so for a fixed $j$ the portmanteau theorem yields, since $A_{n_j}$ is closed,
\[
\mu (A_{n_j} ) \geq \limsup_{k\to\infty} \mu_{n_k} (A_{n_j} ) \geq \overline{D} (A) + \eps/2 .
\]
Since $A$ is equal to the decreasing intersection of the sets $A_{n_j}$ for $j\in\Nb$, we deduce that
\[
\mu (A) = \lim_{j\to\infty} \mu (A_{n_j} ) \geq \overline{D} (A) + \eps/2 .
\]
On the other hand $\mu (A)\leq \overline{D} (A)$ by Proposition~\ref{P-D sup}, producing a contradiction.
\end{proof}

For the rest of this section we will assume that the action $G\curvearrowright X$ is free.
Under this assumption, we can write the sum appearing in Definition~\ref{D-density}
as a cardinality via the equation
\[
\sum_{t\in F} 1_A(tx) = |A\cap Fx|
\]
for all $x\in X$, $A\subseteq X$ and finite subsets $F\subseteq G$.

\begin{definition}\label{D-af in measure}
A free action $G\curvearrowright X$ on a compact metric space is said to be
{\it almost finite in measure} if for every finite set $K\subseteq G$ and $\delta ,\eps > 0$ 
there is an open castle $\{ (V_i , S_i ) \}_{i\in I}$ with levels of diameter
less than $\delta$ such that
\begin{enumerate}
\item each shape $S_i$ is $(K,\delta )$-invariant,

\item $\underline{D} (\bigsqcup_{i\in I} S_i V_i ) \geq 1-\eps$.
\end{enumerate} 
\end{definition}

\begin{definition}
Let $K$ be a finite subset of $G$ and $\delta > 0$.
A set $A\subseteq X$ is said to be {\it $(K,\delta )^*$-invariant} if there exists a finite
set $F\subseteq G$ such that 
\begin{align*}
|(KA\Delta A) \cap Fx| < \delta |A\cap Fx|
\end{align*}
for all $x\in X$.
\end{definition}

\begin{definition}
Let $\eps > 0$. A collection $\{ A_i \}_{i\in I}$ of finite subsets of $G$ is said to be {\it $\eps$-disjoint}
if for every $i\in I$ there is a subset $A_i' \subseteq A_i$ with $|A_i' | \geq (1-\eps )|A_i |$ such that the sets
$A_i'$ for $i\in I$ are pairwise disjoint.
\end{definition}

One can establish the following in the same way as Lemma~\ref{L-density Folner}.

\begin{lemma}
Let $K$ be a finite subset of $G$ and $\delta > 0$. 
Let $( F_n )_n$ be a F{\o}lner sequence for $G$. 
Then a set $A\subseteq X$ is $(K,\delta )^*$-invariant if and only if 
$A\cap F_n x\neq\emptyset$ for all $x\in X$ and sufficiently large $n$ and
\begin{align*}
\lim_{n\to\infty} \, \sup_{x\in X} \frac{|(KA\Delta A) \cap F_n x|}{|A\cap F_n x|} < \delta .
\end{align*}
\end{lemma}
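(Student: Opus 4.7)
My plan is to treat the two directions separately by a F{\o}lner averaging argument, modelled on the proof of Lemma~\ref{L-density Folner}.

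The backward implication is immediate: if the asserted nonemptiness holds and $\lim_{n} \sup_{x} |(KA \Delta A) \cap F_n x|/|A \cap F_n x| < \delta$, then any $F_n$ with $n$ large enough satisfies $|(KA \Delta A) \cap F_n x| < \delta |A \cap F_n x|$ uniformly in $x$, hence serves as a witness $F$ in the definition of $(K,\delta)^*$-invariance.

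For the forward implication I would fix a witness $F$ and first upgrade the definitional bound to a uniform strict bound. Since for each $x$ the ratio $|(KA \Delta A) \cap Fx|/|A \cap Fx|$ is a rational $p/q$ with $p,q \in \{0,\dots,|F|\}$ and $q \geq 1$ (the witness inequality forces $|A \cap Fx| > 0$), the set of attainable values as $x$ varies over $X$ is finite, so
\[
\delta' := \sup_{x \in X} \frac{|(KA \Delta A) \cap Fx|}{|A \cap Fx|}
\]
is an attained maximum that is strictly less than $\delta$. Replacing $x$ by $gx$ yields the uniform estimate $|(KA \Delta A) \cap F(gx)| \leq \delta' |A \cap F(gx)|$ in both $x \in X$ and $g \in G$.

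To convert this fixed-window bound into one for the F{\o}lner window $F_n$, I would use a standard Ornstein--Weiss-style averaging trick: sum the previous inequality over $g$ in the inner core $F_n^{\circ} := \{g \in F_n : Fg \subseteq F_n\}$ and re-index by $t = hg$ with $h \in F$, noting that the multiplicity $|\{(g,h) \in F_n^{\circ} \times F : hg = t\}|$ is bounded above by $|F|$ and attains $|F|$ exactly on the inner-inner core $\{t \in F_n : F^{-1} t \subseteq F_n^{\circ}\}$. Discarding the boundary tail, whose size is $o(|F_n|)$ by the F{\o}lner property, produces
\[
|(KA \Delta A) \cap F_n x| \leq \delta' |A \cap F_n x| + o(|F_n|)
\]
uniformly in $x$. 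A parallel double-counting of the trivial lower bound $|A \cap F(gx)| \geq 1$ over $g \in F_n^{\circ}$ gives $|A \cap F_n x| \geq |F_n^{\circ}|/|F|$, which both secures the eventual nonemptiness of $A \cap F_n x$ and renders the error term asymptotically negligible after dividing. One concludes
\[
\limsup_{n \to \infty} \sup_{x \in X} \frac{|(KA \Delta A) \cap F_n x|}{|A \cap F_n x|} \leq \delta' < \delta.
\]

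The only delicate point is the multiplicity bookkeeping in the averaging step, where the F{\o}lner estimate must be set up so that the shape-$F$ bound transfers to the shape-$F_n$ bound with at most an $o(|F_n|)$ boundary loss; everything else reduces to routine counting.
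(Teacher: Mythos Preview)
Your argument is correct and is precisely the F{\o}lner averaging argument the paper has in mind; the paper in fact omits the proof entirely, stating only that it is established in the same way as Lemma~\ref{L-density Folner}. One small addendum: you conclude only that $\limsup_n \sup_x |(KA\Delta A)\cap F_n x|/|A\cap F_n x| \leq \delta' < \delta$, whereas the statement asserts that the limit exists; this follows because your averaging bound works for \emph{any} finite $F$ with $A\cap Fx\neq\emptyset$ for all $x$ (not just the chosen witness), yielding $\limsup_n \leq \inf_F r(F)$, while trivially $r(F_n)\geq \inf_F r(F)$ once $F_n$ is itself admissible, so the limit exists and equals this infimum.
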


The following is Lemma~3.1 from \cite{ConJacKerMarSewTuc18}.

\begin{lemma}\label{L-1}
Let $K$ and $F$ be finite subsets of $G$ and let $\eps , \delta > 0$. Let 
$C$ be a subset of $X$ and for each $c\in C$ let $F_c$ be a 
$(K,\delta (1-\eps ))$-invariant subset of $F$ so that the  
collection $\{ F_c c : c\in C \}$ is $\eps$-disjoint and 
$\underline{D} (\bigcup_{c\in C} F_c c ) > 0$. Then $\bigcup_{c\in C} F_c c$ is
$(K,\delta )^*$-invariant.
\end{lemma}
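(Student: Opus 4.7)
The plan is to establish the uniform pointwise bound $|(KA\Delta A)\cap F'x|<\delta|A\cap F'x|$ for a sufficiently F\o lner finite set $F'\subseteq G$ by a tile-by-tile counting argument, with the positivity of $\underline{D}(A)$ entering at the end to absorb the boundary-of-$F'$ error term produced by the F\o lner approximation.

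The crucial first move is to extract a \emph{uniform} slack from the strict invariance hypothesis. Because each $F_c$ is a subset of the fixed finite set $F$, the collection $\{F_c:c\in C\}$ consists of only finitely many distinct subsets of $G$. For each of them the strict inequality $|KF_c\Delta F_c|<\delta(1-\eps)|F_c|$ holds, and finiteness produces a uniform $\xi\in(0,1)$ with
\[
|KF_c\Delta F_c|\leq\delta(1-\eps)(1-\xi)|F_c|\qquad\text{for every }c\in C.
\]
Without this quantitative gap the strict inequality would not survive the limiting and summation that follow.

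Next, I would fix $F'$ to be $(L,\eta)$-invariant for $L=FF^{-1}\cup F^{-1}K^{-1}$ with $\eta>0$ to be chosen later. Fix $x\in X$, let $g_c\in G$ be the unique element with $c=g_cx$ (using freeness), and set $C_x=\{c\in C\cap Gx:F_cg_c\cap F'\neq\emptyset\}$. Using $KA\Delta A\subseteq\bigcup_c(KF_c\Delta F_c)c$ together with the uniform slack above, the numerator satisfies
\[
|(KA\Delta A)\cap F'x|\leq\sum_{c}|(KF_c\Delta F_c)g_c\cap F'|\leq\delta(1-\eps)(1-\xi)\sum_{c\in C_x}|F_c|+C_1\eta|F'|,
\]
where the extra $C_1\eta|F'|$ absorbs contributions from tiles whose base lies outside $F'$ but whose $K$-enlargement meets $F'$ (bounded by the $(F^{-1}K^{-1},\eta)$-invariance of $F'$). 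For the denominator, set $A'=\bigsqcup_c F_c'c\subseteq A$. The $\eps$-disjointness yields $(1-\eps)\sum_{c\in C_x}|F_c|\leq\sum_{c\in C_x}|F_c'|$; and since the $F_c'g_c$ are pairwise disjoint and contained in $FF^{-1}F'$, the F\o lner property of $F'$ gives
\[
\sum_{c\in C_x}|F_c'|\leq|A'\cap F'x|+|FF^{-1}F'\setminus F'|\leq|A\cap F'x|+\eta|F'|.
\]
Assembling these two estimates produces
\[
|(KA\Delta A)\cap F'x|\leq\delta(1-\xi)|A\cap F'x|+C_2\eta|F'|
\]
for some constant $C_2$ depending only on $K$, $F$, and $\xi$.

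The endgame invokes the hypothesis $\underline{D}(A)>0$. By Lemma~\ref{L-density Folner}, after possibly enlarging $F'$ within the F\o lner regime one has $|A\cap F'x|\geq \tfrac12\underline{D}(A)|F'|$ uniformly in $x$. Choosing $\eta$ small enough that $C_2\eta<\tfrac14\delta\xi\underline{D}(A)$ ensures $C_2\eta|F'|<\delta\xi|A\cap F'x|$, whence
\[
|(KA\Delta A)\cap F'x|<\delta(1-\xi)|A\cap F'x|+\delta\xi|A\cap F'x|=\delta|A\cap F'x|,
\]
as required. The main obstacle is really the first step: absent the observation that tile shapes vary over a finite family of subsets of $F$, the strict invariance hypothesis would not yield a quantitative gap $\xi$, and the F\o lner error term $C_2\eta|F'|$ would refuse to be absorbed into the bound.
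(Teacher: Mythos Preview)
The paper does not prove this lemma itself; it merely records it as Lemma~3.1 of \cite{ConJacKerMarSewTuc18}. Your reconstruction follows the natural route and captures the two essential ideas: first, because every $F_c$ is a subset of the fixed finite set $F$, the family $\{F_c : c\in C\}$ is finite and the strict inequality $|KF_c\Delta F_c|<\delta(1-\eps)|F_c|$ can be upgraded to a uniform slack $\xi>0$; second, the hypothesis $\underline{D}(A)>0$ is precisely what is needed at the end to absorb the F{\o}lner-boundary error into the main term. Your endgame and your denominator estimate are both correct as written.

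The one place where your write-up is thin is the numerator error term $C_1\eta|F'|$, which you describe as accounting for tiles with $c\notin C_x$ (i.e., $F_cg_c\cap F'=\emptyset$) whose $K$-enlargement still meets $F'$. The bound does hold, but the derivation is not quite as immediate as you suggest. For such a $c$, any element of $(KF_c\setminus F_c)g_c\cap F'$ has the form $ktg_c\in F'$ with $k\in K$ and $t\in F_c$; since $tg_c\in F_cg_c$ is disjoint from $F'$, one gets $tg_c\in K^{-1}F'\setminus F'$ and hence $g_c\in F_c^{-1}(K^{-1}F'\setminus F')\subseteq F^{-1}(K^{-1}F'\setminus F')$. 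Thus the number of such $c$ is at most $|F|\cdot|K^{-1}F'\setminus F'|$, and the second factor is controlled by the $(K^{-1},\eta)$-invariance of $F'$. You should therefore include $K^{-1}$ in your set $L$ (or assume $e\in F$ so that $K^{-1}\subseteq F^{-1}K^{-1}\subseteq L$). With this adjustment your bound $C_1\eta|F'|$ holds with $C_1$ of order $|F|^2$, and the proof is complete.
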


The following is Lemma~3.2 from \cite{ConJacKerMarSewTuc18}.

\begin{lemma}\label{L-2}
Let $T$ be a finite subset of $G$ and let $\eps , \delta > 0$ be such that $\eps (1+\delta ) < 1$.
Let $B$ be a $(T^{-1} , \delta )^*$-invariant subset of $X$, and let $A$ be a subset of $X$ with 
$B\subseteq A$ such that $|A\cap Tx | \geq \eps |T|$ for all $x\in X$. Then
\begin{align*}
\underline{D} (A) \geq (1-\eps (1+\delta )) \underline{D} (B) + \eps .
\end{align*}
\end{lemma}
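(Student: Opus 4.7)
The plan is to exploit the hypothesis $|A\cap Ty|\geq\eps|T|$ via a double-counting argument on the part of the orbit that lies outside $T^{-1}B$, and to combine this with the trivial bound $|A\cap Fx|\geq|B\cap Fx|$ coming from $B\subseteq A$. Fix a F{\o}lner sequence $(F_n)_n$ for $G$ and, using the lemma preceding the statement, choose $n$ so large that
\[
|T^{-1}B\cap F_n x|\leq (1+\delta)|B\cap F_n x|
\]
holds uniformly in $x\in X$ and also $|TF_n\setminus F_n|/|F_n|$ is as small as desired.

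Now I would split $F_n x=(F_n x\cap T^{-1}B)\sqcup(F_n x\setminus T^{-1}B)$ and observe that, by definition of $T^{-1}B$, for every $y\in F_n x\setminus T^{-1}B$ one has $Ty\cap B=\emptyset$, so the $\geq\eps|T|$ points of $A\cap Ty$ all lie in $A\setminus B$. Counting pairs $(t,y)\in T\times(F_n x\setminus T^{-1}B)$ with $ty\in A$ in two ways, using freeness to bound the multiplicity by $|T|$, yields
\[
|T|\cdot |(A\setminus B)\cap TF_n x|\geq \eps|T|\bigl(|F_n|-|T^{-1}B\cap F_n x|\bigr).
\]
Replacing $TF_n x$ by $F_n x$ at the cost of $|TF_n\setminus F_n|$, then applying the invariance bound above, gives
\[
|(A\setminus B)\cap F_n x|\geq \eps|F_n|-\eps(1+\delta)|B\cap F_n x|-|TF_n\setminus F_n|.
\]

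Since $B\subseteq A$ and $B$ and $A\setminus B$ are disjoint, I add $|B\cap F_n x|$ to both sides and collect terms, obtaining
\[
|A\cap F_n x|\geq \bigl(1-\eps(1+\delta)\bigr)|B\cap F_n x|+\eps|F_n|-|TF_n\setminus F_n|.
\]
Dividing by $|F_n|$, taking the infimum over $x\in X$, and finally letting $n\to\infty$ while invoking Lemma~\ref{L-density Folner} (noting $1-\eps(1+\delta)>0$ by hypothesis so that infima behave well under the scaling) delivers the claimed inequality $\underline{D}(A)\geq(1-\eps(1+\delta))\underline{D}(B)+\eps$.

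The main technical point, rather than an obstacle, is the double-counting step: one must keep careful track of which points of $A$ are guaranteed to lie in $A\setminus B$, which is precisely why restricting to the complement of $T^{-1}B$ (not just of $B$) is essential. Everything else is a matter of absorbing the F{\o}lner and invariance error terms and using that infima of sums satisfy $\inf(f+c)=\inf f+c$ for a constant $c$.
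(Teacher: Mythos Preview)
Your argument is correct. The paper itself does not supply a proof of this lemma; it simply records it as Lemma~3.2 of \cite{ConJacKerMarSewTuc18}, so there is no in-paper proof to compare against. Your double-counting approach is the natural one and is essentially what the cited reference does: restrict to points $y\in F_n x\setminus T^{-1}B$ so that $Ty\cap B=\emptyset$, count pairs $(t,y)$ with $ty\in A$, and absorb the boundary error $|TF_n\setminus F_n|$ using the F{\o}lner condition before passing to the limit via Lemma~\ref{L-density Folner}.

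Two minor remarks on presentation. First, the ``lemma preceding the statement'' you invoke for the bound $|T^{-1}B\cap F_n x|\leq(1+\delta)|B\cap F_n x|$ is not the immediately preceding Lemma~\ref{L-1} but the limit characterization of $(K,\delta)^*$-invariance stated two lemmas earlier; it would be clearer to cite it explicitly. Second, freeness is not actually needed to bound the multiplicity by $|T|$ (given $z$ and $t$, the equation $ty=z$ determines $y$ uniquely); rather, freeness is what lets you write $|F_n x|=|F_n|$ and $|TF_n x\setminus F_n x|=|TF_n\setminus F_n|$, which you use implicitly throughout. Neither point affects the validity of the argument.
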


\begin{lemma}\label{L-clopen castle}
Let $X$ be a zero-dimensional compact metrizable space and 
$G\curvearrowright X$ a free action.
Let $Y$ be a clopen subset of $X$
and $S$ a finite subset of $G$. Let $0 < \eps < \frac12$ and $\delta > 0$. Then there 
is a clopen castle $\{ (V_i , S_i ) \}_{i=1}^n$ with levels of diameter less than $\delta$ such that 
\begin{enumerate}
\item $S_i \subseteq S$ and $|S_i | \geq (1-\eps )|S|$ for every $i=1,\dots ,n$,

\item the set $A = \bigsqcup_{i=1}^n S_i V_i$ 
satisfies $Y\cap A = \emptyset$, $Y\cup A = Y\cup \bigcup_{i=1}^n SV_i$, 
and $|Y\cup A\cap Sx| \geq \eps |S|$ for all $x\in X$.
\end{enumerate}
\end{lemma}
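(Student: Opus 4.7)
The plan is to cover the clopen ``bad'' set $C = \{x \in X : |Y \cap Sx| < \eps |S|\}$ by bases of clopen towers built through a compactness-based sequential procedure; the shape of each new tower will be obtained from $S$ by deleting both the elements that land in $Y$ and those that would cause collision with the portion of the castle built so far.

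My first step would be to use zero-dimensionality and freeness to choose, for each $x \in C$, a clopen neighborhood $W_x$ lying in the clopen set $\{y \in X : \{s \in S : sy \in Y\} = T_x\}$, where $T_x = \{s \in S : sx \in Y\}$ has size less than $\eps|S|$, and such that the sets $\{sW_x : s \in S\}$ are pairwise disjoint and each of diameter less than $\delta$. Compactness of $C$ then yields a finite subcover $W_{x_1}, \dots, W_{x_N}$. Next I would process these neighborhoods in order: at stage $j$, writing $A_{j-1}$ for the clopen union of levels of the partial castle built in stages $1,\dots,j-1$ (with $A_0 = \emptyset$), I refine $W_{x_j}$ into the clopen pieces
\[
W_{x_j}^R = \{ y \in W_{x_j} : \{s \in S : sy \in A_{j-1}\} = R \}, \qquad R \subseteq S \setminus T_{x_j},
\]
and for each piece satisfying $|T_{x_j}| + |R| < \eps|S|$ I append a tower with base $V = W_{x_j}^R$ and shape $S_V = S \setminus (T_{x_j} \cup R)$, so that $|S_V| > (1-\eps)|S|$ and, by the definition of $R$, $S_V V$ meets neither $Y$ nor $A_{j-1}$.

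To verify the conclusion, condition (i) and $Y \cap A = \emptyset$ are immediate from the construction. The equality $Y \cup A = Y \cup \bigcup_i SV_i$ follows from the decomposition $SV = S_V V \sqcup T_{x_j} V \sqcup RV$, whose three parts lie in $A$, in $Y$, and in $A_{j-1} \subseteq A$ respectively. For the covering inequality, let $x \in X$: the case $x \notin C$ is handled by $|Y \cap Sx| \geq \eps|S|$; for $x \in C$ I would take the minimal $j$ with $x \in W_{x_j}$ and inspect the piece $W_{x_j}^{R(x)}$ containing $x$. Either $|T_{x_j}| + |R(x)| \geq \eps |S|$, in which case $|(Y \cup A_{j-1}) \cap Sx| \geq \eps |S|$ already, or the piece is turned into a base, placing $x$ in the base of a new tower and giving $|Sx \cap A| \geq |S_V| > (1-\eps)|S|$.

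The main obstacle I anticipate is that a bad point $x$ may have $Sx \cap A_{j-1} \neq \emptyset$, in which case a tower at $x$ with the naive shape $S \setminus T_x$ would collide with the existing castle. Trimming the shape further by $R(x)$ is the device that resolves this collision; the very reason $x$ is still uncovered at stage $j-1$, namely $|T_x| + |R(x)| < \eps|S|$, is exactly what keeps the trimmed shape above the required threshold $(1-\eps)|S|$.
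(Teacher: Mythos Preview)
Your proof is correct and follows essentially the same greedy recursive construction as the paper's: one processes a finite list of clopen tower bases in order, and at each stage refines the current base according to which translates already land in the previously covered set, adding a tower with the trimmed shape whenever that shape is still large enough. The only differences are cosmetic---the paper starts from a clopen \emph{partition} of all of $X$ and sets $A_0 = Y$ (so that $Y$-hits and castle-hits are tracked by a single index $S\setminus T$), whereas you cover only the bad set $C$ by compactness and split the bookkeeping into $T_{x_j}$ and $R$; neither choice affects the argument.
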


\begin{proof}
By freeness and zero-dimensionality we can find a clopen partition
$\{ V_1 , \dots , V_m \}$ of $X$ such that for each $i=1,\dots ,m$ 
the pair $(V_i , S)$ is a tower whose levels all have diameter less than $\delta$.
Write $\cT$ for the set of all $T \subseteq S$ such that $|T| \geq (1-\eps )|S|$.
Set $A_0 = Y$.
We will recursively construct sets $A_1 , \dots , A_m$ and clopen castles $\{ (V_{i,T} , T) \}_{T\in\cT}$
for $i=1,\dots ,m$, with some of the sets $V_{i,T}$ possibly being empty,
such that $A_i$ is the disjoint union of $A_{i-1}$ and 
$\bigsqcup_{T\in\cT} TV_{i,T}$ for each $i=1,\dots ,m$.

Let $1 \leq i\leq m$ and suppose that we have constructed the sets $A_1 , \dots , A_{i-1}$ 
and (in the case $i>1$) clopen castles 
$\{ (V_{j,T} , T) \}_{T\in\cT}$ for $j=1,\dots , i-1$ satisfying the required properties.
For each $T\in\cT$ define the clopen set
\begin{align*}
V_{i,T} = V_i \cap \bigg( \bigcap_{s\in S\setminus T} s^{-1} A_{i-1} \bigg)
\cap \bigg( \bigcap_{s\in T} (X\setminus s^{-1} A_{i-1} ) \bigg) .
\end{align*}
The sets $V_{i,T}$ are pairwise disjoint, and so 
$\{ (V_{i,T} , T) \}_{T\in\cT}$ is a clopen castle
such that $A_{i-1}$ and $\bigsqcup_{T\in\cT} TV_{i,T}$ are disjoint.
Put $A_i = A_{i-1} \sqcup \bigsqcup_{T\in\cT} TV_{i,T}$
to complete the recursive construction.

Set $A = \bigsqcup_{i=1}^m \bigsqcup_{T\in\cT} TV_{i,T}$. Then $Y\cap A = \emptyset$
and $Y\cup A = Y\cup \bigcup_{i=1}^m \bigcup_{T\in\cT} SV_{i,T}$.
Let $x\in X$. Then there is an $1\leq i\leq m$ such that $x\in V_i$.
If $x\in V_{i,T}$ for some $T\in\cT$ then $Tx\subseteq A_i \subseteq Y\cup A$
and hence 
\begin{align*}
|Y\cup A\cap Sx | \geq |Tx | = |T| \geq (1-\eps )|S|\geq \eps |S| .
\end{align*}
On the other hand, if $x\notin V_{i,T}$ for all $T\in\cT$ then the set of all $s\in S$ such that 
$sx\in A_i$ has cardinality at least $\eps |S|$ and hence 
$|Y\cup A\cap Sx |\geq |A_i \cap Sx | \geq \eps |S|$.
Thus the clopen castle $\{ (V_{j,T} , T) \}_{1\leq j\leq m, T\in\cT}$ satisfies condition (ii)
in the lemma statement. Condition (i) is built into the construction.
\end{proof}

The following is a simple (and well-known) exercise.

\begin{lemma}\label{L-almost invt}
Let $K$ be a finite subset of $G$ and $\delta > 0$. Then there is an $\eps > 0$ such that
if $F$ is a $(K,\eps )$-invariant finite subset of $G$ then every set $F' \subseteq F$
with $|F'| \geq (1-\eps )|F|$ is $(K,\delta )$-invariant.
\end{lemma}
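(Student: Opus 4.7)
The plan is to bound $|KF' \Delta F'|$ directly by a multiple of $\eps|F|$ using elementary set inclusions, then translate this into a bound in terms of $|F'|$ via the lower bound $|F'|\ge (1-\eps)|F|$, and finally pick $\eps$ small enough (depending on $|K|$ and $\delta$) to conclude. This is essentially a routine counting exercise, so no serious obstacle is expected; the only task is to keep track of the combinatorial factors.

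First I would split $|KF' \Delta F'| = |KF' \setminus F'| + |F' \setminus KF'|$ and treat the two halves separately. For the forward direction, since $F' \subseteq F$ I would write
\[
KF' \setminus F' \;\subseteq\; (KF' \setminus F) \cup (F \setminus F') \;\subseteq\; (KF \setminus F) \cup (F \setminus F'),
\]
which immediately gives $|KF' \setminus F'| \leq |KF \Delta F| + |F \setminus F'| \leq 2\eps|F|$ by the hypothesis $|KF\Delta F|<\eps|F|$ and the assumption $|F'|\geq (1-\eps)|F|$.

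For the reverse direction I would use
\[
F' \setminus KF' \;\subseteq\; F \setminus KF' \;\subseteq\; (F \setminus KF) \cup (KF \setminus KF').
\]
The first summand has cardinality at most $\eps|F|$ by the $(K,\eps)$-invariance of $F$. For the second, the key observation is that $KF \setminus KF' \subseteq K(F \setminus F')$ (any element of $KF$ not in $KF'$ must be $kf$ for some $f \in F \setminus F'$), so $|KF \setminus KF'| \leq |K| \cdot |F \setminus F'| \leq |K|\eps|F|$. Summing produces $|F' \setminus KF'| \leq (1+|K|)\eps|F|$.

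Combining the two estimates yields $|KF' \Delta F'| \leq (3+|K|)\eps|F|$, and since $|F| \leq |F'|/(1-\eps)$ this is at most $(3+|K|)\eps(1-\eps)^{-1}|F'|$. Choosing $\eps$ at the outset small enough that $(3+|K|)\eps \leq \delta(1-\eps)$—for instance $\eps \leq \delta/(3+|K|+\delta)$—then forces $|KF'\Delta F'| < \delta|F'|$, as required. The argument is self-contained and depends only on the given definition of $(K,\delta)$-invariance.
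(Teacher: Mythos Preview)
Your argument is correct: the elementary set inclusions and the counting with the factor $|K|$ are exactly what is needed, and the choice of $\eps$ at the end is sound (note that it automatically guarantees $\eps<1$, so $F'\neq\emptyset$ and the division by $1-\eps$ is legitimate). The paper does not actually give a proof of this lemma---it is stated as ``a simple (and well-known) exercise''---so there is nothing to compare against; your write-up is a perfectly adequate verification of that exercise.
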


\begin{theorem}\label{T-zero dim}
A free action $G\curvearrowright X$ on a zero-dimensional compact metrizable space 
is almost finite in measure.
\end{theorem}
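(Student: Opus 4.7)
The plan is to reproduce, in clopen form, the standard Ornstein--Weiss iterative tiling construction using Lemma~\ref{L-clopen castle} as the one-step engine and Lemmas~\ref{L-1} and \ref{L-2} for the invariance and density bookkeeping. Fix a finite set $K\subseteq G$ and $\delta , \eps >0$; I may freely shrink both in the course of the argument. Using Lemma~\ref{L-almost invt}, choose an auxiliary $\eps_0 \in (0,\tfrac{1}{2})$ small enough and an auxiliary tolerance $\delta_0 \in (0,\delta)$ small enough that whenever $F\subseteq G$ is $(K,\delta_0 (1-\eps_0))$-invariant, every subset $F' \subseteq F$ with $|F'|\geq (1-\eps_0)|F|$ is $(K,\delta)$-invariant. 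Then pick an integer $N$ so that $(1-\eps_0 (1+\delta_0))^{N} < \eps /2$, and choose a sequence of finite $(K,\delta_0 (1-\eps_0))$-invariant sets $S_1 , \dots , S_N \subseteq G$ which are extremely F{\o}lner, each so much more invariant than the previous one that in particular every $S_i$ is $(T^{-1} , \delta_0)^*$-invariant relative to the shapes arising in all prior steps (this can be arranged by choosing $S_{i+1}$ to be much more $(T^{-1}, \delta_0)$-invariant with respect to a sufficiently large a priori set $T$, in view of Lemma~\ref{L-1}).

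Now carry out the recursion. Set $Y_0 = \emptyset$. Supposing $Y_{i-1}$ has been constructed as the disjoint union of level sets of clopen castles with shapes that are $(K,\delta)$-invariant, of cardinality at least $(1-\eps_0)|S_j|$ and contained in $S_j$ for $j = N, N-1, \dots, N-i+2$, apply Lemma~\ref{L-clopen castle} with $Y = Y_{i-1}$, $S = S_{N-i+1}$, and tolerance parameters $\eps_0$ and $\delta$ to produce a clopen castle $\cC_i = \{(V_{i,\ell},T_{i,\ell})\}_\ell$ with levels of diameter less than $\delta$, shapes $T_{i,\ell}\subseteq S_{N-i+1}$ with $|T_{i,\ell}| \geq (1-\eps_0)|S_{N-i+1}|$ (hence $(K,\delta)$-invariant by our choice of parameters), such that the new level set $A_i = \bigsqcup_\ell T_{i,\ell} V_{i,\ell}$ is disjoint from $Y_{i-1}$ and
\[
|Y_i \cap S_{N-i+1} x| \geq \eps_0 |S_{N-i+1}|
\]
for all $x\in X$, where $Y_i := Y_{i-1} \sqcup A_i$. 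Then the amalgamation of all the castles $\cC_1 , \dots , \cC_N$ forms a single clopen castle whose level set is exactly $Y_N$.

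It remains to show $\underline{D}(Y_N) \geq 1 - \eps$. Here the key observation is that at each stage $Y_{i-1}$ is $(S_{N-i+1}^{-1} , \delta_0)^*$-invariant: indeed $Y_{i-1}$ is a disjoint union (modulo the $\eps_0$-disjointness allowed by the shape truncations) of translates $T_{j,\ell} V_{j,\ell}$ with $T_{j,\ell}$ sufficiently invariant relative to $S_{N-i+1}^{-1}$ by our inductive F{\o}lner choices, so Lemma~\ref{L-1} applies. Lemma~\ref{L-2} then yields
\[
\underline{D}(Y_i) \geq (1 - \eps_0 (1+\delta_0)) \underline{D}(Y_{i-1}) + \eps_0 .
\]
Iterating this bound starting from $\underline{D}(Y_0) = 0$ and using our choice of $N$, together with the fixed point $\eps_0 / (\eps_0 + \eps_0 \delta_0) = 1/(1+\delta_0)$, gives
\[
\underline{D}(Y_N) \geq \tfrac{1}{1+\delta_0} \bigl(1 - (1-\eps_0(1+\delta_0))^N\bigr) \geq 1 - \eps
\]
provided $\delta_0$ was chosen small enough at the outset, which completes the argument.

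The main obstacle, and the only nontrivial point, is arranging the nested F{\o}lner hypothesis precisely enough that the $(S_{N-i+1}^{-1}, \delta_0)^*$-invariance hypothesis of Lemma~\ref{L-2} is satisfied at every step; this is the usual bootstrapping feature of Ornstein--Weiss tiling, and is the reason one must choose $S_1 , \dots , S_N$ in \emph{reverse} order of application, each with F{\o}lner-invariance quantitatively dictated by the union of all the shapes that will appear in the later stages. Once this is set up, the rest is a routine iteration.
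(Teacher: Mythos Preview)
Your proposal is correct and follows essentially the same approach as the paper's proof: both run the Ornstein--Weiss recursion in clopen form, using Lemma~\ref{L-clopen castle} as the one-step engine and Lemmas~\ref{L-1} and \ref{L-2} for the invariance and density bookkeeping, with the F{\o}lner shapes chosen in advance so that each is sufficiently invariant relative to the inverses of the less-invariant ones. The only differences are cosmetic (your parameters $\eps_0,\delta_0,N$ correspond to the paper's $\eps,\beta,n$, and you index the recursion forward with shapes $S_{N-i+1}$ rather than backward with shapes $F_i$).
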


\begin{proof}
Let $K$ be a finite subset of $G$ and $\delta > 0$. Let $\eps > 0$ 
be such that $\eps (1+\delta ) < 1$ and also such that it satisfies the
conclusion of Lemma~\ref{L-almost invt} with respect to $K$ and $\delta$.
Take an $n\in\Nb$ such that $(1-\eps )^n < \eps$.

Fix a $\beta > 0$ such that $(1+\beta )^{-1} (1-(1-(1+\beta )\eps )^n ) > 1 - \eps$ and
take nonempty $(K,\eps )$-invariant finite sets $F_1 , \dots , F_n \subseteq G$ such that 
for all $1\leq j < i \leq n$ the set $F_i$ is $(F_j^{-1} ,\beta (1-\eps ))$-invariant.
By a recursive procedure we will construct, for each $i$ running from $n$ down to $1$,
a clopen castle $\{ (V_{i,k} , F_{i,k} ) \}_{k=1}^{k_i}$ 
with levels of diameter less than $\delta$ 
and $F_{i,k} \subseteq F_i$ and $|F_{i,k} | \geq (1-\eps )|F_i |$ for $k=1,\dots ,k_i$
so that the sets $A_i = \bigsqcup_{k=1}^{k_i} F_{i,k} V_{i,k}$ are pairwise disjoint and
satisfy
\begin{align*}
\bigcup_{j=i}^n A_j
= \bigcup_{j=i+1}^n A_j \cup \bigcup_{k=1}^{k_i} F_i V_{i,k}
\end{align*}
and
\begin{align*}
\underline{D} (A_i \cup A_{i+1} \cup \cdots\cup A_n ) 
\geq \frac{1}{1+\beta} ( 1 - (1-\eps (1+\beta ))^{n+1-i} ) .
\end{align*}
Since each $F_{i,k}$ will be $(K,\delta )$-invariant by our choice of $\eps$, this will establish the theorem.
 
For the first stage of the construction, apply Lemma~\ref{L-clopen castle} with $Y=\emptyset$
and $S = F_n$ to obtain a clopen castle $\{ (V_{n,k} , F_{n,k} ) \}_{k=1}^{k_n}$ 
with levels of diameter less than $\delta$ such that 
\begin{enumerate}
\item $F_{n,k} \subseteq F_n$ and $|F_{n,k} | \geq (1-\eps )|F_n |$ for every $k=1,\dots ,k_n$,

\item the set $A_n = \bigsqcup_{k=1}^{k_n} F_{n,k} V_{n,k}$ satisfies 
$A_n = \bigcup_{k=1}^{k_n} F_n V_{n,k}$ and
$|A_n \cap F_n x| \geq \eps |F_n |$ for all $x\in X$.
\end{enumerate}
Apply Lemma~\ref{L-2} with $B = \emptyset$ and $A = A_n$ to obtain 
$\underline{D} (A_n) \geq \eps$.

Now let $1\leq i < n$ and suppose that we have carried out the stages of the construction
from $n$ down to $i+1$. Apply Lemma~\ref{L-clopen castle} with 
$Y = \bigsqcup_{j=i+1}^n A_j$ and $S = F_i$ to obtain a clopen
castle $\{ (V_{i,k} , F_{i,k} ) \}_{k=1}^{k_i}$ 
with levels of diameter less than $\delta$ such that 
\begin{enumerate}
\item $F_{i,k} \subseteq F_i$ and $|F_{i,k} | \geq (1-\eps )|F_i |$ for every $k=1,\dots ,k_i$,

\item the set $A_i = \bigsqcup_{k=1}^{k_i} F_{i,k} V_{i,k}$ satisfies 
$A_i \cap (A_{i+1} \cup\cdots\cup A_n ) = \emptyset$, 
$\bigsqcup_{j=i}^n A_j = \bigsqcup_{j=i+1}^n A_j \cup \bigcup_{k=1}^{k_i} F_i V_{i,k}$,
and 
$|\bigsqcup_{j=i}^n A_j \cap F_i x| \geq \eps |F_i |$ for all $x\in X$.
\end{enumerate}
By Lemma~\ref{L-1} the set $\bigsqcup_{j=i+1}^n A_j$
is $(F_i^{-1} , \beta )^*$-invariant
and so we can apply Lemma~\ref{L-2} 
with $B = \bigsqcup_{j=i+1}^n A_j$ and $A = \bigsqcup_{j=i}^n A_j$ to obtain 
\begin{align*}
\underline{D} (A_i \cup\cdots\cup A_n ) 
&\geq (1-\eps (1+\beta )) \underline{D} (A_{i+1} \cup\cdots\cup A_n ) + \eps \\
&\geq \frac{1}{1+\beta} ( 1-\eps (1+\beta ) - (1-\eps (1+\beta ))^{n+1-i} + \eps (1+\beta )) \\
&= \frac{1}{1+\beta} ( 1 - (1-\eps (1+\beta ))^{n+1-i} ) ,
\end{align*}
completing the $i$th stage of the construction.
\end{proof}

\section{Extensions measure-isomorphic over singleton fibres}\label{S-extn}

The proofs of Theorems~\ref{thm:SBP} and \ref{thm:TSBP-extension}
will hinge on the use of extensions which are measure-isomorphic
over singleton fibres (Definition~\ref{D-meas isom}), about which
we develop some basic facts here.

Let $Z$ and $X$ be compact metrizable spaces and $\pi :Z\to X$ a continuous surjection.
Write $S_\pi$ for the set of all $x\in X$ such that $\pi^{-1} (x)$ is a singleton. 
Then $S_\pi$ is a $G_\delta$ set, for if we fix a compatible metric on $X$ then we
can write $S_\pi$ as the intersection of the open sets $\{ x\in X : \diam (\pi^{-1} (x)) < 1/n \}$
for $n\in\Nb$. 

\begin{proposition}\label{P-bijective}
For $\mu\in M(Z)$ the following are equivalent:
\begin{enumerate}
\item $\pi_* \mu (S_\pi ) = 1$,

\item there is a basis $\cB$ for the topology on $Z$ such that for every $U\in\cB$ one has
$\pi_* \mu (\pi (U) \cap \pi (Z\setminus U)) = 0$,

\item for every open set $U\subseteq Z$ one 
has $\pi_* \mu (\pi (U) \cap \pi (Z\setminus U)) = 0$,

\item for every pair of disjoint closed sets $C_1 , C_2 \subseteq Z$ one 
has $\pi_* \mu (\pi (C_1 ) \cap \pi (C_2 )) = 0$,

\item for every closed set $C\subseteq Z$ one has $\pi_* \mu (\pi (C)) = \mu (C)$,

\item for every open set $U\subseteq Z$ one has $\pi_* \mu (\pi (U)) = \mu (U)$.
\end{enumerate}
\end{proposition}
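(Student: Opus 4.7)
The plan is to establish the implications in the cycle (i) $\Rightarrow$ (v) $\Rightarrow$ (iv) $\Rightarrow$ (iii) $\Rightarrow$ (ii) $\Rightarrow$ (i), and to treat (v) $\Leftrightarrow$ (vi) separately as a short approximation argument. Throughout, I will freely use that $\pi_*\mu(A) = \mu(\pi^{-1}(A))$ for every Borel $A \subseteq X$.

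First I would prove (i) $\Rightarrow$ (v). For a closed set $C \subseteq Z$, every point of $\pi^{-1}(\pi(C)) \setminus C$ shares its $\pi$-image with a distinct point of $C$, so $\pi(\pi^{-1}(\pi(C)) \setminus C) \subseteq X \setminus S_\pi$. Under (i), this forces $\mu(\pi^{-1}(\pi(C)) \setminus C) \leq \pi_*\mu(X \setminus S_\pi) = 0$, hence $\pi_*\mu(\pi(C)) = \mu(\pi^{-1}(\pi(C))) = \mu(C)$. Next, (v) $\Rightarrow$ (iv) is a short inclusion--exclusion: if $C_1, C_2$ are disjoint and closed, then $\pi(C_1 \cup C_2) = \pi(C_1) \cup \pi(C_2)$, and applying (v) to $C_1$, $C_2$ and $C_1 \cup C_2$ yields $\pi_*\mu(\pi(C_1) \cap \pi(C_2)) = \mu(C_1) + \mu(C_2) - \mu(C_1 \cup C_2) = 0$.

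For (iv) $\Rightarrow$ (iii), fix a compatible metric on $Z$ and an open $U \subseteq Z$; set $C = Z \setminus U$ and $C_n = \{z \in Z : d(z,C) \geq 1/n\}$. Then $C_n$ is closed and disjoint from $C$, and $U = \bigcup_n C_n$, so $\pi(U) \cap \pi(C) = \bigcup_n (\pi(C_n) \cap \pi(C))$ is a countable union of $\pi_*\mu$-null sets by (iv). The implication (iii) $\Rightarrow$ (ii) is immediate from second countability of $Z$ (take any countable basis, which satisfies (iii) a fortiori). For (ii) $\Rightarrow$ (i), first pass from $\cB$ to a countable sub-basis $\cB_0 \subseteq \cB$ (which exists because $Z$ is hereditarily Lindelöf). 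Given any $x \in X \setminus S_\pi$, pick distinct $z_1, z_2 \in \pi^{-1}(x)$ and, by Hausdorffness and the basis property, some $U \in \cB_0$ with $z_1 \in U$ and $z_2 \notin U$; then $x \in \pi(U) \cap \pi(Z \setminus U)$. Thus $X \setminus S_\pi$ is covered by the countable family $\{\pi(U) \cap \pi(Z \setminus U) : U \in \cB_0\}$, each member of which is $\pi_*\mu$-null by (ii), giving $\pi_*\mu(X \setminus S_\pi) = 0$.

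Finally, (v) $\Leftrightarrow$ (vi) is obtained by a two-sided approximation. Given open $U$, set $C_n = \{z : d(z, Z\setminus U) \geq 1/n\}$; then $C_n \nearrow U$, so $\pi(C_n) \nearrow \pi(U)$, and upward continuity applied to $\mu$ and $\pi_*\mu$ derives (vi) from (v). Conversely, given closed $C$, set $U_n = \{z : d(z,C) < 1/n\}$; a standard compactness argument (any $x \in \bigcap_n \pi(U_n)$ admits preimages $z_n \in U_n$ with $z_n \to z \in \bigcap_n \overline{U_n} = C$) shows $\bigcap_n \pi(U_n) = \pi(C)$, and downward continuity then extracts (v) from (vi).

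The main obstacle is (ii) $\Rightarrow$ (i), where one must be careful to reduce to a countable sub-basis before taking a countable union of null sets; the other implications are routine measure-theoretic manipulations. A minor technical point to watch is the compactness argument identifying $\bigcap_n \pi(U_n)$ with $\pi(C)$ in the proof of (vi) $\Rightarrow$ (v), which relies on $Z$ being compact (so that the preimage sequence $(z_n)$ admits a convergent subsequence).
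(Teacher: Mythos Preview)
Your proof is correct and follows essentially the same cycle of implications as the paper, with two minor differences worth noting. First, you go directly (i)$\Rightarrow$(v) while the paper passes through (vi) first; these are dual arguments of the same length. Second, for (ii)$\Rightarrow$(i) the paper argues differently: it sets $W_n = \{ z\in Z : \diam(\pi^{-1}(\pi(z))) \geq 1/n \}$, covers each closed set $W_n$ by finitely many basic sets $U_j \in \cB$ of diameter at most $1/(2n)$, observes that then $\pi(U_j)\cap\pi(W_n) \subseteq \pi(U_j)\cap\pi(Z\setminus U_j)$, and concludes $\mu(W_n)=0$ for every $n$. Your countable-sub-basis covering of $X\setminus S_\pi$ is equally valid; just note that the correct justification for extracting a countable $\cB_0 \subseteq \cB$ that is still a basis is second countability of $Z$ (compact metrizable), not the hereditary Lindel\"of property per se, which does not suffice in general non-metrizable spaces. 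The paper's version avoids this extraction step by exploiting compactness and the diameter structure of the basis directly, but neither approach is materially simpler than the other.
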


\begin{proof}
(i)$\Rightarrow$(vi). For every open set $U\subseteq Z$ we have 
$\pi^{-1} (S_\pi \cap \pi (U)) = \pi^{-1}(S_\pi)\cap U$
and hence, using (i),
\begin{align*}
\mu(U) \leq \pi_* \mu (\pi (U)) = \pi_* \mu (S_\pi \cap \pi (U)) = \mu(\pi^{-1}(S_\pi)\cap U) =\mu (U) .
\end{align*}

(vi)$\Rightarrow$(v). Let $C$ be a closed subset of $Z$. Take a decreasing sequence  
$U_1 \supseteq U_2 \supseteq\dots$ of open subsets of $Z$ such that $C = \bigcap_{n=1}^\infty U_n$.
Then $\pi (C)$ is equal to the intersection of the decreasing sequence 
\[
\pi (U_1 )\supseteq \pi (U_2 ) \supseteq\dots
\]
and so (vi) yields
\begin{align*}
\pi_* \mu (\pi (C)) 
= \lim_{n\to\infty} \pi_* \mu (\pi (U_n )) 
= \lim_{n\to\infty} \mu (U_n ) 
= \mu (C) .
\end{align*}

(v)$\Rightarrow$(iv). Let $C_1$ and $C_2$ be disjoint closed subsets of $Z$. Then
since $\pi (C_1 )\cup \pi (C_2 ) = \pi (C_1 \cup C_2 )$ we can apply (v) to obtain
\begin{align*}
\pi_* \mu (\pi (C_1 ) \cap \pi (C_2 )) 
&= \pi_* \mu (\pi (C_1 )) + \pi_* \mu (\pi (C_2 )) - \pi_* \mu (\pi (C_1 )\cup \pi (C_2 )) \\
&= \mu (C_1 ) + \mu (C_2 ) - \mu (C_1 \cup C_2 ) 
= 0 .
\end{align*}

(iv)$\Rightarrow$(iii). Let $U$ be an open subset of $Z$.
Take an increasing sequence $C_1 \subseteq C_2 \subseteq\dots$ of closed
subsets of $Z$ such that $U = \bigcup_{n=1}^\infty C_n$.
By (iv), for every $n$ we have
\begin{align*}
\pi_* \mu (\pi (C_n ) \cap \pi (X\setminus U)) 
= 0 .
\end{align*}
Thus 
\begin{align*}
\pi_* \mu (\pi (U) \cap \pi (X\setminus U)) 
&= \pi_* \mu \bigg(\bigg(\bigcup_{n=1}^\infty \pi (C_n)\bigg) \cap \pi (X\setminus U)\bigg) \\
&= \pi_* \mu \bigg(\bigcup_{n=1}^\infty (\pi (C_n) \cap \pi (X\setminus U))\bigg) \\
&= \lim_{n\to\infty} \pi_* \mu (\pi (C_n) \cap \pi (X\setminus U)) 
=0 ,
\end{align*}
yielding (iii).

(iii)$\Rightarrow$(ii). Trivial. 

(ii)$\Rightarrow$(i). Let $\cB$ be as in (ii). Fix a compatible metric $d$ on $X$.
Given an $n\in\Nb$, we define the set $W_n = \{ z\in Z : \diam (\pi^{-1} (\pi (z))) \geq 1/n \}$,
which is readily seen to be closed. By compactness we can find open sets 
$U_1 , \dots , U_{k_n} \in\cB$ of diameter at most $1/(2n)$ whose union contains $W_n$.
Then for each $j=1,\dots , k_n$ we have $\pi (U_j ) \cap \pi (W_n ) \subseteq \pi (Z\setminus U_j )$,
and since by (ii) we have $\pi_* \mu (\pi (U_j ) \cap \pi (Z\setminus U_j )) = 0$ we deduce that
$\pi_* \mu (\pi (U_j ) \cap \pi (W_n )) = 0$ and hence $\mu (U_j \cap W_n ) = 0$.
Therefore
\begin{align*}
\mu (W_n ) \leq \sum_{j=1}^{k_n} \mu (U_j \cap W_n ) = 0 .
\end{align*}
It follows that $\mu (\bigcup_{n=1}^\infty W_n ) = \lim_{n\to\infty} \mu (W_n ) = 0$.
Since $\bigcup_{n=1}^\infty W_n$ is equal to $Z\setminus \pi^{-1} (S_\pi )$, we conclude that
$\pi_* \mu (S_\pi ) = 1$.
\end{proof}

Now let $G\curvearrowright Z$ and $G\curvearrowright X$ be actions on 
compact metrizable spaces and let $\pi : Z\to X$ be a factor map. 
Note that the $G_\delta$ set $S_\pi$ defined at the beginning of the section is $G$-invariant in this case.

\begin{definition}\label{D-meas isom}
The extension $\pi$ is {\it measure-isomorphic} if for every $\mu\in M_G (Z)$ the map 
$\pi$ induces a measure conjugacy 
between the measure-preserving actions $G\curvearrowright (Z,\mu )$ and $G\curvearrowright (X,\pi_* \mu )$,
i.e., $\pi$ restricts to an equivariant measure isomorphism between some $G$-invariant conull subsets
of $Z$ and $X$.
The extension $\pi$ is {\it measure-isomorphic over singleton fibres}
if $\pi_* \mu (S_\pi ) = 1$ for every $\mu\in M_G (Z)$.
\end{definition}

Note that if $\pi$ is measure-isomorphic over singleton fibres then it is measure-isomorphic,
as $\pi$ restricts to a $G$-equivariant Borel isomorphism 
$\pi^{-1} (S_\pi )\to S_\pi$ (\cite{Kec95}, Corollary~15.2).
The converse is false, as examples in \cite{DowGla16} demonstrate.

The following is immediate from Proposition~\ref{P-bijective}.

\begin{proposition}\label{P-bijective dyn}
For the extension $\pi : Z\to X$ the following are equivalent:
\begin{enumerate}
\item $\pi$ is measure-isomorphic over singleton fibres,

\item there is a basis $\cB$ for the topology on $Z$ such that for every $U\in\cB$ one has
$\pi_* \mu (\pi (U) \cap \pi (Z\setminus U)) = 0$ for all $\mu\in M_G (Z)$,

\item for every open set $U\subseteq Z$ one 
has $\pi_* \mu (\pi (U) \cap \pi (Z\setminus U)) = 0$ for all $\mu\in M_G (Z)$,

\item for every pair of disjoint closed sets $C_1 , C_2 \subseteq Z$ one 
has $\pi_* \mu (\pi (C_1 ) \cap \pi (C_2 )) = 0$ for all $\mu\in M_G (Z)$,

\item for every closed set $C\subseteq Z$ one has $\pi_* \mu (\pi (C)) = \mu (C)$ for all $\mu\in M_G (Z)$,

\item for every open set $U\subseteq Z$ one has $\pi_* \mu (\pi (U)) = \mu (U)$ for all $\mu\in M_G (Z)$.
\end{enumerate}
\end{proposition}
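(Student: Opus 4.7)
The plan is simply to apply Proposition~\ref{P-bijective} with each of the measures $\mu \in M_G(Z)$, so that each of conditions (i)--(vi) of Proposition~\ref{P-bijective dyn} becomes the universal quantification over $\mu \in M_G(Z)$ of the corresponding condition of Proposition~\ref{P-bijective}. Note that the set $S_\pi$ is $G$-invariant since $\pi^{-1}(sx) = s\pi^{-1}(x)$, so condition (i) of Proposition~\ref{P-bijective dyn} is literally the quantification of (i) of Proposition~\ref{P-bijective} over $\mu \in M_G(Z)$.

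For the implications (i)$\Rightarrow$(vi)$\Rightarrow$(v)$\Rightarrow$(iv)$\Rightarrow$(iii) of Proposition~\ref{P-bijective dyn}, one reads off each implication directly from the corresponding one in Proposition~\ref{P-bijective} applied to every fixed $\mu \in M_G(Z)$; there is no interaction between different measures. The implication (iii)$\Rightarrow$(ii) is trivial since one may take $\cB$ to be any basis.

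The only implication requiring a moment's thought is (ii)$\Rightarrow$(i), because in (ii) of Proposition~\ref{P-bijective dyn} the same basis $\cB$ must work for every $\mu \in M_G(Z)$ simultaneously, whereas in (ii) of Proposition~\ref{P-bijective} the basis is allowed to depend on $\mu$. However this is only a strengthening of the hypothesis: given the single basis $\cB$ guaranteed by (ii) of Proposition~\ref{P-bijective dyn}, for each individual $\mu \in M_G(Z)$ the basis $\cB$ witnesses condition (ii) of Proposition~\ref{P-bijective}, so that proposition yields $\pi_* \mu (S_\pi) = 1$. Since this holds for every $\mu \in M_G(Z)$, condition (i) of Proposition~\ref{P-bijective dyn} follows. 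The main (mild) obstacle is thus just this bookkeeping about uniformity of the basis, which poses no real difficulty.
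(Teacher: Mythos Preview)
Your proposal is correct and takes essentially the same approach as the paper, which simply states that the result is immediate from Proposition~\ref{P-bijective}. Your careful bookkeeping about the uniformity of the basis in (ii) is exactly the right way to spell out what ``immediate'' means here.
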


Note that if $G$ is amenable then one has $\pi_* (M_G (Z)) = M_G (X)$ as a consequence of the Hahn--Banach theorem.
To be more precise, we can view a given measure $\nu$ in $M_G (X)$ as a
$G$-invariant state $\sigma$ on the C$^*$-algebra $C(X)$ and, regarding $C(X)$ as a C$^*$-subalgebra
of $C(Z)$ under the embedding given by composition with $\pi$, we can extend $\sigma$ by the Hahn--Banach theorem 
to a state $\tilde{\sigma}$ on $C(Z)$.
Using compactness of the state space, we may then take a weak$^*$ cluster point of a sequence of averaged states given by
$f\mapsto |F_n |^{-1} \sum_{s\in F_n} \tilde{\sigma} (s^{-1} f)$ over a F{\o}lner sequence $\{ F_n \}$ of $G$, which yields a $G$-invariant state on $C(Z)$ restricting to $\sigma$ on $C(X)$.
By the Riesz representation theorem, such a state corresponds to a unique measure in $M_G (Z)$ that maps to $\nu$ under (the push-forward of) $\pi$.

In particular, one can replace ``$\pi_* \mu$'' and ``for all $\mu\in M_G (Z)$'' with ``$\nu$'' and
``for all $\nu\in M_G (X)$'', respectively, in each of the conditions (ii), (iii), and (iv)
in the above proposition.

\section{Equivalence of the small boundary property and almost finiteness in measure}\label{S-SBP af in meas}

Our goal in this section is to establish Theorem~\ref{T-af in measure} equating
almost finiteness in measure with the small boundary property under the assumption of freeness.
Below $X$ is assumed to be a compact metrizable space.

Let us begin by recalling the small boundary property. This concept originated in the work of Shub and Weiss 
(Section~1 of \cite{ShuWei91}) and later appeared in the context of actions with mean dimension zero \cite{LinWei00}.

\begin{definition}
An action $G\curvearrowright X$ is said to have the {\it small boundary property} if there is a basis for the topology on $X$ consisting of open sets $U$ such that $\mu(\partial U)=0$ for every $\mu\in M_G(X)$.
\end{definition}

When $G$ is amenable, the condition on $U$ above can be rephrased as $\overline{D}(\partial U)=0$ in view of Proposition \ref{P-D sup}.

Recall that a set $C\subseteq X$ is said to be {\it regular closed} if $C=\overline{\interior (C)}$.

\begin{definition}\label{D-regular}
A finite collection $\cP$ of subsets of $X$ is called a {\it regular closed partition} 
if all of its members are regular closed, 
$X=\bigcup\cP$, and $C_1\cap C_2\subseteq\partial C_1\cap \partial C_2$ for all 
distinct elements $C_1, C_2\in\cP$.
In this case we define the boundary of $\cP$ by
\[
\partial\cP = \bigcup_{C\in\cP} \partial C \ \subseteq \ X.
\]
If $\alpha$ is some homeomorphism from $X$ to itself, then $\alpha(\cP)$ will denote the regular closed partition given by the family $\{ \alpha(C) \mid C\in\cP \}$.
In other words, we identify $\alpha$ with its induced bijection on the set of all regular closed partitions on $X$.
\end{definition}

A regular closed partition is not a genuine partition in general, but can be viewed as
a partition modulo a set which is both closed and nowhere dense and hence 
topologically small in a manner of speaking.

\begin{remark}
Let $\cP_1$ and $\cP_2$ be two regular closed partitions of $X$. As for open covers, we say that $\cP_1$ 
{\it refines} $\cP_2$ if there exists a map $r: \cP_1\to\cP_2$ such that $C\subseteq r(C)$ for all $C\in\cP_1$.
We refer to such a map as a {\it refinement map}.
Unlike for open covers, it follows easily from the definition of a regular closed partition 
that such a refinement map, if it exists, is unique.
\end{remark}

\begin{definition}
Let $\cP_1$ and $\cP_2$ be two regular closed partitions of $X$.
We define their common refinement by
\[
\cP_1 \vee \cP_2 = \{ \overline{\interior(C_1\cap C_2)} \mid C_1\in\cP_1,\ C_2\in\cP_2 \},
\]
which is easily seen to be another regular closed partition.
\end{definition}

The main thrust of the following theorem lies in the construction proving the implication
\ref{thm:SBP:1}$\Rightarrow$\ref{thm:SBP:2}, which draws inspiration from a similar
construction in \cite{Kul95} carried out for $G=\mathbb Z$ in a slightly different context.

\begin{theorem}\label{thm:SBP}
Suppose that $G$ is amenable.
Let $\alpha: G\curvearrowright X$ be an action on a compact metric space.
The following are equivalent:
\begin{enumerate}[label=(\roman*)]
\item $\alpha$ has the small boundary property, \label{thm:SBP:1}
\item there exists an extension $\pi: (Z,\gamma) \to (X,\alpha)$ which is measure-isomorphic over singleton fibres such that $Z$ is zero-dimensional, \label{thm:SBP:2}
\item for every $\varepsilon>0$ there exists a finite collection $\cO$ of pairwise disjoint open subsets of $X$ of diameter at most $\eps$ such that $\underline{D}(\bigcup\cO)= 1$, \label{thm:SBP:3}
\item for every $\varepsilon>0$ there exists a finite collection $\cO$ of pairwise disjoint open subsets of $X$ of diameter at most $\eps$ such that $\underline{D}(\bigcup\cO)\geq 1-\varepsilon$.  \label{thm:SBP:4}
\item for every $\varepsilon>0$ and every pair of open sets $U, V\subseteq X$ with $\overline{U}\subseteq V$ there exists an open set $U_0$ with $U\subseteq U_0\subseteq V$ such that $\overline{D}(\partial U_0)\leq\varepsilon$. \label{thm:SBP:5}
\end{enumerate}
\end{theorem}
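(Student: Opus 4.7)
The plan is to establish the five conditions as equivalent through a cycle of direct implications, with the substantive work concentrated in $(i)\Rightarrow(ii)$. First I will handle the routine directions. For $(i)\Rightarrow(iii)$, I would cover $X$ by finitely many SBP open sets $U_1,\dots,U_m$ of diameter at most $\eps$ and form the Boolean atoms $O_S=\bigcap_{i\in S}U_i\cap\bigcap_{i\notin S}(X\setminus\overline{U_i})$ over nonempty $S\subseteq\{1,\dots,m\}$; these are pairwise disjoint open sets of diameter at most $\eps$ whose union is $X\setminus\bigcup_i\partial U_i$, with complement of $\overline{D}$ equal to zero by SBP. The implication $(iii)\Rightarrow(iv)$ is immediate. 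For $(iv)\Rightarrow(v)$, given $\overline{U}\subseteq V$ I would apply (iv) with diameter less than $\min(\eps,d(\overline{U},X\setminus V)/2)$ and set $U_0=U\cup\bigcup\{O\in\cO:O\cap\overline{U}\neq\emptyset\}$; a case analysis exploiting pairwise disjointness of the atoms yields $\partial U_0\subseteq X\setminus\bigcup\cO$, hence $\overline{D}(\partial U_0)\leq\eps$. The reverse direction $(v)\Rightarrow(iv)$ proceeds by shrinking an open cover of $X$ to sets of small diameter, applying (v) to each piece, and forming Boolean atoms as above. The same thickening construction, now with $\cO$ drawn from (iii) so that $\overline{D}(X\setminus\bigcup\cO)=0$, also directly yields $(iii)\Rightarrow(i)$.

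The main step $(i)\Rightarrow(ii)$ is a Kulesza-style construction. I would fix a countable family $\{U_n\}$ of SBP open sets that (a) forms a basis for the topology of $X$, (b) is closed under the $G$-action through a permutation $\sigma_s$ of $\Nb$ with $sU_n=U_{\sigma_s(n)}$, and (c) separates points in the sense that for any distinct $x\neq x'\in X$ some $U_n$ has $x\in U_n$ and $x'\in X\setminus\overline{U_n}$. The set $N=\bigcup_n\partial U_n$ then has $\overline{D}(N)=0$ by SBP. I define a partial coding $\chi\colon X\setminus N\to\{0,1\}^\Nb$ by $\chi(x)=(\chi_{U_n}(x))_n$ and let $Z\subseteq X\times\{0,1\}^\Nb$ be the closure of its graph. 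For $(x,\omega)\in Z$ realized as a limit $(y_k,\chi(y_k))\to(x,\omega)$, the openness of $U_n$ forces $\omega_n=1$ when $x\in U_n$ and $\omega_n=0$ when $x\in X\setminus\overline{U_n}$; by the separation property this makes the projection $Z\to\{0,1\}^\Nb$ injective, so $Z$ embeds in the Cantor space and is therefore zero-dimensional. The induced $G$-action on $\{0,1\}^\Nb$ through $\sigma_s$ preserves $Z$, so $\pi\colon Z\to X$ becomes a $G$-equivariant continuous surjection. For $x\notin N$ the fibre $\pi^{-1}(x)$ is the singleton $\{(x,\chi(x))\}$, so $X\setminus N\subseteq S_\pi$; since $\overline{D}(N)=0$, every $\nu\in M_G(X)$ has $\nu(S_\pi)=1$, and by the surjectivity of $\pi_*\colon M_G(Z)\to M_G(X)$ noted after Proposition~\ref{P-bijective dyn} this gives $\pi_*\mu(S_\pi)=1$ for every $\mu\in M_G(Z)$.

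For $(ii)\Rightarrow(i)$, given $x\in V$ open I would use zero-dimensionality of $Z$ to find a clopen $U\subseteq Z$ with $\pi^{-1}(x)\subseteq U\subseteq\pi^{-1}(V)$; then $W=X\setminus\pi(Z\setminus U)$ is an open neighborhood of $x$ in $V$, and $\partial W\subseteq\pi(U)\cap\pi(Z\setminus U)$ has $\nu$-measure zero for every $\nu\in M_G(X)$ by Proposition~\ref{P-bijective dyn}(iv). This yields $(i)\Leftrightarrow(ii)\Leftrightarrow(iii)$, and together with $(iii)\Rightarrow(iv)\Leftrightarrow(v)$ it suffices to close the loop by, say, deriving $(v)\Rightarrow(iii)$. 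The hardest part I anticipate is precisely this closure: passing from the approximate bound $\overline{D}(\partial U_0)\leq\eps$ available in (v) to the exact condition $\overline{D}(X\setminus\bigcup\cO)=0$ in (iii) would require iterating the thickening construction along a summable sequence of parameters $\eps_n\to 0$ and using a Borel--Cantelli-type argument on invariant measures to prevent the boundary sets at different scales from accumulating, which is the key subtlety distinguishing the exact SBP of (i), (iii) from the approximate versions (iv), (v).
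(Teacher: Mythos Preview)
Your cycle is almost complete and largely correct, with one genuine gap and one minor imprecision worth flagging.

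\textbf{The gap.} You correctly identify $(v)\Rightarrow(i)$ (or $(v)\Rightarrow(iii)$) as the missing link, but your sketch --- summable $\eps_n$ plus a Borel--Cantelli argument --- is not the right mechanism. The difficulty is not that boundaries at different scales accumulate in measure; it is that the boundary of a nested union $W=\bigcup_n W_n$ is not a priori controlled by the individual $\partial W_n$ at all. The paper's device, which you are missing, is Proposition~\ref{P-portmanteau}: for any closed $A$ and $\eps>0$ there is a closed neighbourhood $A_+$ of $A$ with $\overline{D}(A_+)\le\overline{D}(A)+\eps$. Using this, one builds $W_1\subseteq W_2\subseteq\cdots$ together with \emph{nested} closed neighbourhoods $Z_1\supseteq Z_2\supseteq\cdots$ of the successive boundaries, with $\overline{D}(Z_n)\le\eps_n\to 0$ and $\overline{W_{n+1}}\subseteq W_n\cup Z_n$. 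The nesting forces $\partial W\subseteq\bigcap_n Z_n$, which has upper density zero by monotonicity alone --- no summability, no Borel--Cantelli. Without the thickening step from Proposition~\ref{P-portmanteau}, the recursion does not close up.

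\textbf{A minor imprecision.} In your $(i)\Rightarrow(ii)$ you write $\overline{D}(N)=0$ for $N=\bigcup_n\partial U_n$. Upper density is not countably subadditive, and $N$ need not be closed, so this is not literally what you mean. What you actually use (and what holds) is $\nu(N)=0$ for every $\nu\in M_G(X)$, by countable additivity of each $\nu$; that suffices for $\nu(S_\pi)=1$.

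\textbf{Comparison with the paper.} Your $(i)\Rightarrow(ii)$ is a genuinely different (and arguably slicker) construction than the paper's. The paper builds $Z$ as an inverse limit of successively finer $G$-refined regular closed partitions $\cQ_k$, defines the action via refinement maps, and checks equivariance by hand through an explicit composition identity. Your coding map $\chi:X\setminus N\to\{0,1\}^{\Nb}$ with $Z$ the closure of its graph gives zero-dimensionality for free via the injective projection to $\{0,1\}^{\Nb}$, and the $G$-action comes transparently from the permutation of coordinates. Both are in the Kulesza spirit; yours trades the partition bookkeeping for a clean embedding argument. Similarly, your direct $(i)\Rightarrow(iii)$ via Boolean atoms of a finite SBP cover bypasses the extension altogether, whereas the paper routes through $(ii)$. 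These shortcuts are fine, but note that the paper's inverse-limit construction is later reused (Theorem~\ref{thm:TSBP-extension}) in the topological-small-boundary setting, where the partition structure carries extra information; your coding construction would also adapt, but the paper's version makes the topological smallness of $\pi(W_1)\cap\pi(W_2)$ more transparent.
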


\begin{proof}
\ref{thm:SBP:1}$\Rightarrow$\ref{thm:SBP:2}.
Let us fix a compatible metric $d$ on $X$ for the course of the proof.
Given an $n\in\mathbb{N}$ we may find an open cover $\{ U_1, \dots , U_m \}$ of $X$ such that for each $j$ the set $U_j$ has diameter at most $1/n$ and $\nu(\partial U_j) = 0$ for all 
$\nu\in M_G (X)$.
By recursively defining
\[
C_1 = \overline{U}_1,\quad C_{k+1} = \overline{ U_{k+1}\setminus\Big( \bigcup_{j=1}^k C_k\Big)}
\]
for $k<m$, we obtain a regular closed partition $\cP_n = \{C_1,\dots,C_m\}$ of $X$.
By our choice of the sets $U_j$, each $C_j$ has diameter at most $1/n$ and $\nu(\partial\cP_n)=0$ for all 
$\nu\in M_G (X)$

Now let $F_1 \subseteq F_2 \subseteq\dots$ be an increasing sequence of finite subsets of $G$ with $e\in F_n=F_n^{-1}$ and $G=\bigcup_{n=1}^\infty F_n$.
Recursively define a sequence of regular closed partitions $\cQ_n$ of $X$ by
\[
\cQ_1 = \cP_1,\quad \cQ_{k+1} = \cP_{k+1} \vee \bigvee_{s\in F_{k}} \alpha_s(\cQ_k). 
\]
Given $k\in\mathbb N$ and $s\in F_k$, the partition $\alpha_s(\cQ_{k+1})$ refines $\cQ_k$ by definition, and we let $r_k^s: \alpha_s(\cQ_{k+1})\to\cQ_k$ be the refinement map sending each member of $\alpha_s(\cQ_{k+1})$ to the member of $\cQ_k$ which contains it.
For brevity, write $r_k^{1}=r_k$.
By the uniqueness of refinement maps between regular closed partitions, we may observe that
\begin{equation} \label{eq:r-composition-rule}
r_{k}^{st}\circ\alpha_{st}\circ r_{k+1} = r_{k}^{s}\circ\alpha_s\circ r_{k+1}^{t}\circ\alpha_t
\end{equation}
whenever $s,t\in G$ and $k$ are such that all of these maps are defined.

We define a compact metrizable space as the inverse limit
\[
Z=\lim_{\longleftarrow} \{ \cQ_k, r_k \} = \Big\{ (C_k)_{k\geq 1} \in \prod_{k\geq 1} \cQ_k \mid C_k = r_k(C_{k+1}) \text{ for all } k\geq 1\Big\}
\]
and denote by $r_{\infty, k}: Z\to\cQ_k$ the natural projection map for each $k$.
Clearly $Z$ is zero-dimensional, and in fact the collection
\[
\cB = \{ r_{\infty,k}^{-1}(A) \mid k\geq 1,\ A\in\cQ_k \}
\]
is a canonical basis for the topology consisting of clopen sets.
Note that each sequence $(C_k)_k\in Z$ is uniquely determined by any one of its cofinite tails, which we will use without further mention.

Given an element $z=(C_k)_k\in Z$, we observe that the intersection
$\bigcap_{k=1}^\infty C_k$ is nonempty by compactness and can only contain one element because its diameter 
is at most 
\[
\inf_{k\in\mathbb N} \operatorname{diam}(C_k) \leq \inf_{k\in\mathbb N} 1/k = 0.
\]
Thus we may define a map $\pi: Z\to X$ by declaring $\pi\big( (C_k)_k \big)$ to be the unique element 
in the intersection $\bigcap_{k=1}^\infty C_k$.

It is easy to check that $\pi$ is continuous, for if $z_1, z_2\in Z$ are both contained in the clopen set $r_{\infty,k}^{-1}(A)$ for some $k\geq 1$ and $A\in\cQ_k$, then by definition of $\pi$ this means $\pi(z_1), \pi(z_2)\in A$, which by construction of $\cQ_k$ implies $d(\pi(z_1),\pi(z_2))\leq 1/k$.

The surjectivity of $\pi$ holds because $(\cQ_k)_k$ is a sequence of successively finer regular closed partitions: if $x\in X$ then there is some $C_1\in\cQ_1$ with $x\in C_1$.
Inductively, if we have a finite sequence 
\[
C_1\supseteq C_2\supseteq\dots\supseteq C_{k_0} \ni x \quad\text{such that}\quad C_j\in\cQ_j,\quad 1\leq j\leq k_0,
\]
then we can find some $C_{k_0+1}\in\cQ_{k_0+1}$ with $x\in C_{k_0+1}\subseteq C_{k_0}$.
This gives us some sequence $(C_k)_k=z\in Z$ such that $x=\pi(z)$.

Let us now define a $G$-action $\gamma$ on $Z$.
Let $s\in G$.
Then there is a $k_0\in\mathbb N$ such that $s\in F_{k_0}$.
We define $\gamma_s: Z\to Z$ via the assignment
\[
 (C_k)_{k>k_0} \mapsto (r_k^s(\alpha_s(C_{k+1})))_{k\geq k_0} .
\]
Then $\gamma_s$ is a well-defined map on $Z$.
It is also continuous: if two elements $z_1, z_2\in Z$ agree on their first $j+1$ components as sequences for $j>k_0$, then it follows from the definition of $\gamma_s$ that $\gamma_s(z_1)$ and $\gamma_s(z_2)$ agree on their first $j$ components.
As $Z$ is equipped with the product topology coming from $\prod \cQ_k$, this yields continuity.

It is clear that $\gamma_e$ is the identity map.
To show that $\gamma$ is an action of $G$ by homeomorphisms, we only need to show that the assignment $s\mapsto\gamma_s$ is multiplicative in $G$ with respect to composition.
So let $s,t\in G$ and suppose that $s,t, st\in F_{k_0}$ for some $k_0 \in\Nb$.
Then
\begin{align*}
\gamma_{st}\big( (C_k)_{k>k_0} \big) 
&= (r_k^{st}(\alpha_{st}(C_{k+1})))_{k\geq k_0}  \\
&= (r_k^{st}(\alpha_{st}(r_{k+1}(C_{k+2}))))_{k\geq k_0}  \\
&\stackrel{\eqref{eq:r-composition-rule}}{=} r_k^s(\alpha_s(r_{k+1}^t(\alpha_t(C_{k+2}))))_{k\geq k_0}  \\
&= \gamma_s\big( (r_{k+1}^t(\alpha_t(C_{k+2})))_{k\geq k_0} \big) \\
&= (\gamma_s\circ\gamma_t)\big( (C_k)_{k>k_0} \big).
\end{align*}
This verifies that $\gamma$ is an action.

The map $\pi: Z\to X$ is then $G$-equivariant: for $s\in G$, $k_0\in\mathbb N$ as above and arbitrary $z=(C_k)_k\in Z$, we have
\[
\alpha_s(\pi(z))\in \alpha_s\Big( \bigcap_{k=1}^\infty C_k \Big) = \bigcap_{k\geq k_0}^\infty \alpha_s(C_{k+1}) \subseteq \bigcap_{k\geq k_0} r_k^s(\alpha_s(C_{k+1})) \ni \pi(\gamma_s(z)).
\]
Since the intermediate sets above are one-point sets, this yields $\alpha_s(\pi(z))=\pi(\gamma_s(z))$ for all $z\in Z$ and $s\in G$.

To show that the extension $\pi: (Z,\gamma)\to (X,\alpha)$ is measure-isomorphic over singleton fibres it suffices to check that $\pi$ satisfies condition (ii)
in Proposition \ref{P-bijective dyn} with respect to the canonical basis $\cB$ defined above.
Let $W\in\cB$ be an element in this basis.
Then there are $k\geq 1$ and $A\in\cQ_k$ such that $W=r_{\infty ,k}^{-1}(A)$.
The complement is given by $Z\setminus W=\bigsqcup_{C\in\cQ_k ,\, C\neq A} r_{\infty ,k}^{-1}(C)$.
By the definition of the map $\pi$, we see that $\pi(W)=A$ and $\pi(Z\setminus W)=\bigcup\{ C \mid C\in\cQ_k, C\neq A \}\subseteq X$.
As $\cQ_{k}$ is a regular closed partition the intersection of these two sets lies inside $\partial\cQ_k$, which has vanishing upper density by construction.
Therefore $\overline{D}(\pi(W)\cap\pi(Z\setminus W))=0$ for all $W\in\cB$, which finishes the proof of \ref{thm:SBP:2}.

\ref{thm:SBP:2}$\Rightarrow$\ref{thm:SBP:3}. Let $\pi: (Z,\gamma)\to (X,\alpha)$ be an extension as in \ref{thm:SBP:2}.
Let $\varepsilon>0$ be given. 
Let $\delta>0$ be small enough so that sets of diameter at most $\delta$ are mapped under $\pi$ to sets of diameter at most $\varepsilon$.
Choose a clopen partition $\cP$ of $Z$ consisting of sets with diameter at most $\delta$.
Let us consider 
\[
\cO = \{ \pi(W)\setminus\pi(Z\setminus W) \mid W\in\cP \}.
\]
By design, the members of $\cO$ are pairwise disjoint and have diameter at most $\varepsilon$.
For every $W\in\cP$ we have
$\overline{X\setminus\pi(W)}\subseteq \pi(Z\setminus W)$ and hence $\partial \pi(W) \subseteq \pi(W)\cap\pi(Z\setminus W)$, showing that the members of $\cO$ are open.

For each $W\in\cP$, since $Z\setminus W$ is closed we have $\nu(\pi(W)\cap\pi(Z\setminus W))=0$ for every 
$\nu\in M_G (X)$ by Proposition \ref{P-bijective dyn}(iv).
As $X$ is covered by the images of the members of $\cP$ under $\pi$, we have
\[
X\setminus\bigcup\cO \subseteq \bigcup \{ \pi(W)\cap\pi(Z\setminus W) \mid W\in\cP \}
\]
This implies that $\underline{D}(\bigcup\cO) = 1-\overline{D}(X\setminus\bigcup\cO) = 1$ and thus yields \ref{thm:SBP:3}.

\ref{thm:SBP:3}$\Rightarrow$\ref{thm:SBP:4}. Trivial.

\ref{thm:SBP:4}$\Rightarrow$\ref{thm:SBP:5}.
Let $U, V\subseteq X$ be open sets with $\overline{U}\subseteq V$.
Let $\varepsilon>0$ be given, and assume without loss of generality that it is small enough so that the $\varepsilon$-neighbourhood of $U$ is contained in $V$.
By \ref{thm:SBP:4} there is a finite collection $\cO$ of pairwise disjoint open subsets of $X$ with diameter less than $\varepsilon$ such that $\underline{D}(\bigcup\cO)\geq 1-\varepsilon$.
Set
\[
U_0 = U\cup\bigcup\{ W \in \cO \mid W\cap\partial U \neq \emptyset \}.
\]
Then clearly $U\subseteq U_0\subseteq V$.
By construction we have $\partial U_0 \subseteq X\setminus\bigcup\cO$ and thus $\overline{D}(\partial U_0)\leq\varepsilon$, as desired.

\ref{thm:SBP:5}$\Rightarrow$\ref{thm:SBP:1}. 
Let $U, V\subseteq X$ be open sets such that $\overline{U}\subseteq V$.
We will find an open set $W\subseteq X$ with $U\subseteq W\subseteq V$ such that $\overline{D}(\partial W)=0$.
Choose a sequence $(\eps_n )_n$ of strictly positive real numbers such that $\eps_n \to 0$.

Apply \ref{thm:SBP:5} to obtain an open set $W_1$ with $U\subseteq W_1\subseteq\overline{W}_1\subseteq V$ and $\overline{D}(\partial W_1)\leq\varepsilon_1/2$.
By Proposition~\ref{P-portmanteau} there is a closed neighbourhood $Z_1$ of $\partial W_1$ such that $Z_1 \subseteq V$ and $\overline{D}(Z_1) \leq \varepsilon_1$.
Apply \ref{thm:SBP:5} again to obtain an open set $W_2$ with $W_1\subseteq W_2\subseteq \overline{W}_2 \subseteq W_1\cup Z_1$ and $\overline{D}(\partial W_2)\leq \varepsilon_2/2$.
By Proposition~\ref{P-portmanteau} there is a closed neighbourhood $Z_2$ of $\partial W_2$ such that $Z_2 \subseteq Z_1$ and $\overline{D}(Z_2) \leq \varepsilon_2$.
Carry on recursively like this to generate for each $n$
an open set $W_n$ and a closed neighbourhood $Z_n$ of $\partial W_n$
such that $U\subseteq W_n\subseteq V$, $W_n\subseteq W_{n+1}$,
$Z_{n+1}\subseteq Z_n$, $W_{n+1}\subseteq W_n\cup Z_n$, and $\overline{D}(Z_n)\leq\varepsilon_n$.
Set
\[
W=\bigcup_{n\in\mathbb N} W_n.
\]
Then $U\subseteq W\subseteq V$. We also have $\partial W \subseteq \bigcap_{n\in\mathbb N} Z_n$ and hence $\overline{D}(\partial W)=0$.
This proves that $\alpha$ has the small boundary property.
\end{proof}

\begin{theorem}\label{T-af in measure}
Suppose that $G$ is amenable.
Then a free action $\alpha : G\curvearrowright X$ on a compact metrizable space
has the small boundary property if and only if it is almost finite in measure.
\end{theorem}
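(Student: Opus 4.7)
The plan is to prove the equivalence in two directions. For the ``if'' direction, I would apply the definition of almost finiteness in measure with $K=\{e\}$, $\delta=\eps$, and density parameter $\eps$; the levels of the resulting open castle form a finite family of pairwise disjoint open sets of diameter less than $\eps$ whose union has lower density at least $1-\eps$, which is precisely condition~\ref{thm:SBP:4} of Theorem~\ref{thm:SBP}, whence the small boundary property follows.

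For the converse, I would begin by applying the implication \ref{thm:SBP:1}$\Rightarrow$\ref{thm:SBP:2} of Theorem~\ref{thm:SBP} to obtain an extension $\pi:(Z,\gamma)\to(X,\alpha)$ which is measure-isomorphic over singleton fibres, with $Z$ zero-dimensional. Equivariance of $\pi$ together with freeness of $\alpha$ forces $\gamma$ to be free (any element fixing $z\in Z$ must fix $\pi(z)$). Fix a compatible metric on $Z$. Given a finite $K\subseteq G$ and $\delta,\eps>0$, uniform continuity of $\pi$ yields an $\eta>0$ so that subsets of $Z$ with diameter less than $\eta$ map to subsets of $X$ of diameter less than $\delta$. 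Applying Theorem~\ref{T-zero dim} to $\gamma$, I obtain a clopen castle $\{(V_i,S_i)\}_{i\in I}$ on $Z$ with $(K,\delta)$-invariant shapes, clopen levels of diameter less than $\eta$, and $\underline{D}(\bigsqcup_i S_iV_i)\geq 1-\eps$.

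The castle is then pushed down to $X$ by setting $W_i = X\setminus\pi(Z\setminus V_i)$. Since $V_i$ is clopen and $Z$ is compact, $\pi(Z\setminus V_i)$ is closed, so each $W_i$ is open, and one readily checks $W_i=\pi(V_i)\setminus\pi(Z\setminus V_i)$. Using equivariance and the disjointness of the $sV_i$ in $Z$, I verify that $\{(W_i,S_i)\}_{i\in I}$ is an open castle whose levels $sW_i\subseteq\pi(sV_i)$ have diameter less than $\delta$ and whose shapes are $(K,\delta)$-invariant. The decisive step, and the main technical obstacle, is the density estimate $\underline{D}(\bigsqcup_i S_iW_i)\geq 1-\eps$. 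By Proposition~\ref{P-D sup} it suffices to bound this from below at each $\nu\in M_G(X)$; using the Hahn--Banach lifting noted after Proposition~\ref{P-bijective dyn}, I would choose $\mu\in M_G(Z)$ with $\pi_*\mu=\nu$, and then invoke parts (iii) and (vi) of Proposition~\ref{P-bijective dyn} applied to the clopen sets $sV_i$ to conclude $\pi_*\mu(sW_i)=\mu(sV_i)$ for each $i$ and $s\in S_i$. Summing over the (disjoint) levels then gives $\nu(\bigsqcup_i S_iW_i)=\mu(\bigsqcup_i S_iV_i)\geq 1-\eps$, as required. The heart of the argument is therefore the interplay between the measure-isomorphism over singleton fibres, which transfers $\mu$-measures of clopen subsets of $Z$ to $\pi_*\mu$-measures of their open images in $X$, and the ability to lift $G$-invariant measures from $X$ back to $Z$.
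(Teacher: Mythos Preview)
Your proposal is correct and follows essentially the same route as the paper: the ``if'' direction via condition~\ref{thm:SBP:4} of Theorem~\ref{thm:SBP}, and the ``only if'' direction by passing to a zero-dimensional extension, invoking Theorem~\ref{T-zero dim}, and pushing the clopen castle down via $W_i=\pi(V_i)\setminus\pi(Z\setminus V_i)$. Your density estimate is in fact a more detailed unpacking of the paper's terse line ``$\underline{D}(\bigcup_i S_iW_i)=\underline{D}(\bigcup_i S_iV_i)$'': you make explicit the lifting of $\nu\in M_G(X)$ to $\mu\in M_G(Z)$ and the level-by-level identity $\pi_*\mu(sW_i)=\mu(sV_i)$ via Proposition~\ref{P-bijective dyn}(iii),(vi), whereas the paper compresses this into the observation that $\overline{D}(\pi(sV_i)\cap\pi(Z\setminus sV_i))=0$.
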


\begin{proof}
The ``if'' part follows from \ref{thm:SBP:4}$\Rightarrow$\ref{thm:SBP:1} of Theorem \ref{thm:SBP}.
Let us show the ``only if'' part.
Applying \ref{thm:SBP:1}$\Rightarrow$\ref{thm:SBP:2} of Theorem \ref{thm:SBP}, there exists an extension $\pi: (Z,\gamma)\to (X,\alpha)$ such that $Z$ is zero-dimensional and which is measure-isomorphic over singleton fibres.
Since $\alpha$ is free, so is $\gamma$. Fix compatible metrics on $Z$ and $X$.

Let $K$ be a finite subset of $G$ and $\eps > 0$.
By Theorem~\ref{T-zero dim} the action $\gamma$ is almost finite in measure, and so there exists a clopen castle 
$\{ (V_i , S_i ) \}_{i\in I}$ with $(K,\varepsilon)$-invariant shapes such that 
$\underline{D}(\bigcup_{i\in I} S_i V_i )\geq 1-\varepsilon$.
Choose a $\delta>0$ small enough so that subsets of $Z$ of diameter at most $\delta$ are mapped under $\pi$ to subsets of $X$ of diameter at most $\varepsilon$.
By partitioning each $V_i$ into smaller clopen sets and relabeling, we may assume that the levels of our castle all have diameter at most $\delta$.
For each $i\in I$ set
\[
W_i = \pi(V_i)\setminus\pi(Z\setminus V_i) 
\]
and note that $sW_i = \pi(sV_i)\setminus\pi(Z\setminus sV_i)$ for each $s\in S_i$.
Then $\{ (W_i , S_i ) \}_{i\in I}$ is an open castle.
The shapes of this castle are $(K,\varepsilon)$-invariant, and its levels have diameter at most $\varepsilon$ by our choice of $\delta$.
Also, since $\overline{D}(\pi(sV_i)\cap\pi(Z\setminus sV_i))=\overline{D}(\pi(V_i)\cap\pi(Z\setminus V_i)) = 0$ for every $i\in I$ and $s\in S_i$ by Propositions~\ref{P-bijective dyn} and \ref{P-D sup}, we have
\[
\underline{D}\bigg(\bigcup_{i\in I} S_i W_i\bigg) = \underline{D}\bigg(\bigcup_{i\in I} S_i V_i\bigg) \geq 1-\varepsilon.
\]
This finishes the proof.
\end{proof}

\begin{corollary}
Suppose that $G$ is infinite and amenable. Let $G\curvearrowright X$ be a free action
on a compact metrizable space which is almost finite in measure. 
Then the action has mean dimension zero.
\end{corollary}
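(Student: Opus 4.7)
The plan is a two-step reduction, bypassing any direct analysis of open covers. First, Theorem~\ref{T-af in measure} already tells us that, under the hypothesis of freeness, almost finiteness in measure is equivalent to the small boundary property. So the action immediately acquires the SBP. Second, I would invoke the theorem of Lindenstrauss and Weiss \cite{LinWei00} (originally for $\Zb$-actions, but with a proof that goes through verbatim for countable amenable acting groups) stating that the small boundary property implies mean dimension zero. The combination of these two steps yields the conclusion.

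For the second step, which is the substantive one, I would recall the standard argument: given a finite open cover $\cU$ of $X$, use the SBP to produce an open refinement $\cV$ of $\cU$ whose members all satisfy $\nu(\partial V)=0$ for every $\nu \in M_G(X)$, equivalently $\overline{D}(\partial V)=0$ by Proposition~\ref{P-D sup}. For a F{\o}lner set $F\subseteq G$, the refined cover $\bigvee_{s\in F} s^{-1}\cV$ has order bounded independently of $|F|$ off the union $\bigcup_{s\in F}s^{-1}(\partial\cV)$; by invariance, the relative size of this bad union along a F{\o}lner sequence $(F_n)$ can be made negligible, so after extending the refinement across the bad part using the ambient cover $\cU$, one obtains $D(\bigvee_{s\in F_n}s^{-1}\cU)/|F_n| \to 0$.

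The main obstacle, were one to insist on a fully self-contained direct proof straight from almost finiteness in measure, would be organizing this ``boundary-avoiding'' refinement together with a tiling argument that converts the upper-density control of the boundary into an order bound scaling sublinearly in $|F_n|$. But since the equivalence with SBP is already in hand through Theorem~\ref{T-af in measure}, the corollary reduces to invoking the classical Lindenstrauss--Weiss theorem and no new technology is required; the role of the hypothesis that $G$ be infinite is simply to ensure that one has a F{\o}lner sequence along which to define mean dimension.
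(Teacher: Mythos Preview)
Your proposal is correct and follows essentially the same approach as the paper: apply Theorem~\ref{T-af in measure} to pass from almost finiteness in measure to the small boundary property, then invoke Lindenstrauss--Weiss \cite{LinWei00} (noting, as the paper also does in a footnote, that the argument there extends from $\Zb$ to general countable amenable groups). Your additional sketch of the Lindenstrauss--Weiss argument is more detail than the paper provides, but the logical structure is identical.
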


\begin{proof}
By Theorem~\ref{T-af in measure},
the action has the small boundary property. This in turn implies mean dimension zero
by Theorem~5.4 of \cite{LinWei00}.\footnote{The arguments there are carried out for $G=\Zb$ 
but the authors make a point that they also apply more generally to all amenable groups.}
\end{proof}

\section{Comparison and almost finiteness}\label{S-comparison af}

In Theorem~\ref{T-af comparison} below
we strengthen Theorem~9.2 of \cite{Ker17}
using almost finiteness in measure via Theorem~\ref{T-af in measure}.
The novelty is the implication (iii)$\Rightarrow$(i)
under the hypothesis of the small boundary property, which generalizes the
corresponding implication in Theorem~9.2 of \cite{Ker17} and is established
below in the same way, only this time using Theorem~\ref{T-af in measure}
instead of the original form of the Ornstein--Weiss tower theorem.

Recall the definitions of almost finiteness and $m$-comparison from Section~\ref{S-notation}.
By Theorem~9.2 of \cite{Ker17}, almost finiteness implies comparison, 
and from this it is readily seen that the action $G\curvearrowright X$ is almost finite if and only if 
it is almost finite in measure (Definition~\ref{D-af in measure}) and has comparison.

\begin{theorem}\label{T-af comparison}
Suppose that $G$ is amenable.
Let $G\curvearrowright X$ be a free action on a compact metrizable space. 
Then the following are equivalent:
\begin{enumerate}
\item the action is almost finite,

\item the action has the small boundary property and comparison,

\item the action has the small boundary property and $m$-comparison for some $m\geq 0$.
\end{enumerate}
\end{theorem}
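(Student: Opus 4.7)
The plan is to close the loop $(i)\Rightarrow(ii)\Rightarrow(iii)\Rightarrow(i)$. The forward implications are the easy ones: the paragraph preceding the theorem already records that almost finiteness decomposes as almost finiteness in measure together with comparison, and combined with Theorem~\ref{T-af in measure} (which identifies almost finiteness in measure with SBP under freeness) this gives $(i)\Rightarrow(ii)$. The implication $(ii)\Rightarrow(iii)$ is immediate since comparison is the case $m=0$.

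The substance lies in $(iii)\Rightarrow(i)$. Following the announcement in the text, I would transcribe the proof of Theorem~9.2 of \cite{Ker17}, replacing its invocation of the Ornstein--Weiss tower theorem with the topological analogue provided by Theorem~\ref{T-af in measure}. Given $n\in\Nb$, a finite set $K\subseteq G$, and $\delta>0$, I would first fix auxiliary parameters $K'\supseteq K$, $\delta'\in(0,\delta)$, and an auxiliary density threshold $\eta>0$ small enough in terms of $n$ and $m$, and then use SBP together with Theorem~\ref{T-af in measure} to produce an open castle $\{(V_i,S_i)\}_{i\in I}$ with $(K',\delta')$-invariant shapes, diameters less than $\delta$, and remainder $R$ of upper density less than $\eta$. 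Within each shape $S_i$ I would carve out a subset $S_i'$ of relative size slightly below $1/n$, arranged Ornstein--Weiss-style so that $S_i\setminus S_i'$ remains $(K,\delta)$-invariant and so that $S_i'$ admits $m+1$ pairwise disjoint right-translates of a fixed size inside $S_i$.

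The hard step is converting the $m$-comparison hypothesis into the $\prec$-subequivalence demanded by almost finiteness. After using Proposition~\ref{P-portmanteau} together with the uniform-in-measure bounds from Proposition~\ref{P-D sup} to enlarge $R$ slightly to an open neighbourhood $R^+$ whose upper density still lies strictly below the lower density of the reserved set $\bigsqcup_i S_i' V_i$, the hypothesis of $m$-comparison furnishes $R^+\prec_m \bigsqcup_i S_i' V_i$, that is, a covering of closed subsets of $R^+$ by open sets partitioned into $m+1$ classes, each translating disjointly into $\bigsqcup_i S_i' V_i$. Using the $m+1$ disjoint translates manufactured inside each $S_i$, I would absorb each of these $m+1$ classes into a distinct translate of the reserved set, collapsing the multiplicity-$(m+1)$ cover into a genuine multiplicity-one cover and yielding $R\prec \bigsqcup_i S_i' V_i$. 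This is the principal obstacle, and it is precisely where the combinatorics inside the shapes intervene to neutralize the extra multiplicity permitted by $m$-comparison; the remainder of the argument is bookkeeping inherited from \cite[Thm.~9.2]{Ker17}, now available in the present topological generality thanks to Theorem~\ref{T-af in measure}.
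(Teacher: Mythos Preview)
Your approach is the paper's. The one slip is in the de-multiplexing combinatorics: you target $m$-comparison at the full reserved set $\bigsqcup_i S_i' V_i$ with $|S_i'|<|S_i|/n$ and then route the $m+1$ colour classes to $m+1$ disjoint translates of $S_i'$, but that yields $R\prec\bigsqcup_i\big(\bigcup_{j=0}^m g_j S_i'\big)V_i$ rather than $R\prec\bigsqcup_i S_i'V_i$, and the union of the $m+1$ translates has relative size near $(m+1)/n$, too large for the almost finiteness condition. The paper repairs this by setting $q=(m+1)n$, choosing $m+1$ pairwise disjoint pieces $S_{i,0},\dots,S_{i,m}\subseteq S_i$ each of common cardinality strictly between $|S_i|/(2q)$ and $|S_i|/q$, applying $m$-comparison only to the \emph{single} copy $B=\bigsqcup_i S_{i,0}V_i$ (whose measure then exceeds that of the remainder for every invariant measure), and routing colour $j$ from $S_{i,0}$ to $S_{i,j}$ via an arbitrary bijection $\varphi_{i,j}:S_{i,0}\to S_{i,j}$; the final target $\bigsqcup_{j=0}^m S_{i,j}$ then automatically has size below $|S_i|/n$.

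Two side notes. Your $R^+$ enlargement via Proposition~\ref{P-portmanteau} is a careful step that the paper leaves implicit (the remainder is closed, so one does pass to a slightly larger open set before invoking $m$-comparison, which is stated for open sets). The requirement that $S_i\setminus S_i'$ be $(K,\delta)$-invariant is superfluous---almost finiteness only asks the full shapes $S_i$ to be approximately invariant---and the paper also disposes of the case of finite $G$ separately at the outset.
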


\begin{proof}
The implications (i)$\Rightarrow$(ii)$\Rightarrow$(iii) are part of Theorem~9.2 of \cite{Ker17}, with the small boundary property coming from Theorem~\ref{T-af in measure}.
Let us prove then (iii)$\Rightarrow$(i). 

Note first that if $G$ is finite then the small boundary property implies that $X$ is totally disconnected, 
in which case the freeness of the action implies that one has an equivariant decomposition
$X\cong Y\times G$, where $Y\subseteq X$ is a clopen subset and $G$ acts trivially on the first component and by left translation on the second component.
In this case the action is obviously almost finite. We may thus assume that $G$ is infinite.

By hypothesis the action has $m$-comparison for some $m\geq 0$.
Let $K$ be a finite subset of $G$, $\delta > 0$, and $n\in\Nb$. 
Set $q = (m+1)n$.
Since $G$ is infinite there are a finite set $K' \subseteq G$
with $K\subseteq K'$ and a $\delta' > 0$ with $\delta' \leq\delta$ such that
every nonempty $(K' , \delta' )$-invariant finite set $F\subseteq G$
is large enough so that there exists a set $S\subseteq F$ satisfying
$|F|/(2q) < |S| < |F|/q$.
Since the action has the small boundary property, by Theorem~\ref{T-af in measure} 
it is almost finite in measure, and so
we can find an open castle $\{ (V_i , T_i ) \}_{i\in I}$ with $(K',\delta' )$-invariant shapes
such that the remainder $A = X\setminus\bigsqcup_{i\in I} T_i V_i$ satisfies
$\overline{D} (A) < 1/(2q+1)$. By our choice of $K'$ and $\delta'$ we can find
pairwise disjoint sets $S_{i,0} , \dots , S_{i,m} \subseteq T_i$ 
such that each has the same cardinality $\kappa$ satisfying 
$|T_i|/(2q) < \kappa < |T_i|/q$.
Set $T_i' = S_{i,0} \sqcup\dots\sqcup S_{i,m}$ and note that
\begin{align}\label{E-n}
|T_i' | = (m+1)\kappa < \frac{m+1}{q} |T_i| = \frac1n |T_i | .
\end{align}

Next set $B = \bigsqcup_{i\in I} S_{i,0} V_i$.
By Proposition~\ref{P-D sup},
for every $\mu\in M_G (X)$ we have $\mu (A) \leq \overline{D} (A) < 1/(2q+1)$ 
and hence
\begin{align*}
\mu (B) 
\geq \sum_{i\in I} \frac{|S_{i,0} |}{|T_i|} \mu (T_i V_i ) 
> \frac{1}{2q} \mu \bigg( \bigsqcup_{i\in I} T_i V_i \bigg) 
= \frac{1}{2q} (1-\mu (A)) 
> \mu (A) .
\end{align*}
Therefore $A\prec_m B$ by our hypothesis of $m$-comparison, and so there exist a finite
collection $\cU$ of open subsets of $X$ covering $A$, an $s_U \in G$ for every $U\in\cU$,
and a partition $\cU = \cU_0 \cup\cdots\cup \cU_m$ such that for every $i=0,\dots ,m$
the sets $s_U U$ for $U\in\cU_i$ are pairwise disjoint and contained in $B$.

For every $i\in I$ and $j=0,\dots ,m$ choose a bijection $\varphi_{i,j} : S_{i,0} \to S_{i,j}$.
For $U\in\cU$, $i\in I$, and $t\in S_{i,0}$, write $W_{U,i,t} = U\cap s_U^{-1} tV_i$. Note that
each fixed $U$ is partitioned by the open sets $W_{U,i,t}$ for $i\in I$ and $t\in S_{i,0}$.
Also, if for each $U$ we write $j_U$ for the $j$ such that $U\in\cU_j$, then the sets
$\varphi_{i,j_U} (t)t^{-1} s_U W_{U,i,t}$ for $U\in\cU$, $i\in I$, and $t\in S_{i,0}$
are pairwise disjoint and contained in $\bigsqcup_{i\in I} T_i' V_i$.
We have consequently verified that $A\prec \bigsqcup_{i\in I} T_i' V_i$. In conjunction
with (\ref{E-n}), this establishes almost finiteness.
\end{proof}

Recalling the notion of tower dimension for actions from Definition~4.3 of \cite{Ker17},
we obtain the following strengthening of Theorem~7.2 in \cite{Ker17}.
Note that the assumption of finite covering dimension cannot be removed, as Example~12.5 in \cite{Ker17} illustrates.

\begin{corollary}\label{C-tow dim af}
Suppose that $G$ is amenable and that $X$ is a compact metrizable space with finite covering dimension.
Let $G\curvearrowright X$ be a free minimal action with finite tower dimension.
Then the action is almost finite.
\end{corollary}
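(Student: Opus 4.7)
The plan is to deduce this directly from Theorem~\ref{T-af comparison}, which characterizes almost finiteness as the conjunction of the small boundary property and $m$-comparison for some $m \geq 0$. Thus it suffices to verify these two hypotheses separately under the assumptions on the action.

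For the small boundary property, I would invoke the fact (referenced in the footnote to Theorem~B and attributed to Theorem~3.8 of \cite{Sza15}) that every free action of a countable amenable group on a compact metrizable space of finite covering dimension automatically satisfies the (topological) small boundary property. Since our $X$ is assumed finite-dimensional and the action is free, this applies directly and gives condition (ii) of Theorem~\ref{T-af comparison} without having to revisit the Ornstein--Weiss machinery.

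For the $m$-comparison hypothesis, this is where finite tower dimension enters. The plan is to appeal to the core content of Theorem~7.2 of \cite{Ker17}, where it is shown that a free minimal action on a compact metrizable space of finite covering dimension and with finite tower dimension satisfies $m$-comparison for some $m \geq 0$ (originally used there to deduce $\cZ$-stability of the crossed product via the methods of \cite{Ker17}). The interplay is that a finite upper bound on tower dimension allows one to produce castles with $(K,\delta)$-invariant shapes whose remainders can be covered, after disjointification controlled by the covering dimension of $X$, using a bounded number of disjoint families drawn from a small portion of the towers. This is precisely the combinatorial ingredient yielding $m$-comparison.

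Once both hypotheses are in hand, Theorem~\ref{T-af comparison} (iii)$\Rightarrow$(i) immediately upgrades the conclusion from $m$-comparison plus the small boundary property to full almost finiteness. The main conceptual point, and what makes this a genuine strengthening of Theorem~7.2 of \cite{Ker17}, is that the earlier argument used these same two ingredients only to reach $\cZ$-stability of the crossed product, whereas the new equivalence obtained via Theorem~\ref{T-af in measure} lets us extract the stronger dynamical conclusion at the level of the action itself. There is no real obstacle here beyond citing the right two results; the work has been done in the preceding sections, and the role of this corollary is to package it.
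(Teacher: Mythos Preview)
Your proposal is correct and follows essentially the same route as the paper's own proof: invoke Theorem~3.8 of \cite{Sza15} to obtain the small boundary property from the finite-dimensionality of $X$, use Theorem~7.2 of \cite{Ker17} to deduce $m$-comparison from finite tower dimension together with finite covering dimension, and then apply Theorem~\ref{T-af comparison}(iii)$\Rightarrow$(i). The only minor slip is your reference to ``condition (ii)'' when you mean the small boundary part of condition (iii); the argument you actually run uses (iii)$\Rightarrow$(i), as you correctly note at the end.
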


\begin{proof}
Finite tower dimension and the finite-dimensionality of $X$ together imply,
by Theorem~7.2 of \cite{Ker17}, that the the action has $m$-comparison for some integer $m\geq 0$.
Since the action has the small boundary property in view of the finite-dimensionality of $X$
(as follows for example from Theorem~3.8 of \cite{Sza15}),
we can then apply Theorem~\ref{T-af comparison} to obtain the conclusion.
\end{proof}

\section{Almost finiteness and the topological small boundary property}

In Section~\ref{S-extn} we studied extensions which are measure-isomorphic 
over singleton fibres and used them in Section~\ref{S-SBP af in meas} to establish  
the equivalence between almost finiteness in measure
and the small boundary property. Here we apply such extensions
in the more purely topological setting of the so-called topological small boundary
property to obtain a result about almost finiteness (Theorem~\ref{thm:TSBP-almost-finite})
that sets the stage for Theorems~\ref{T-subexp} and \ref{T-classification} 
in the next section.

\begin{definition} \label{def:top-small}
Let $G\curvearrowright X$ be an action.
A set $A\subseteq X$ is said to be {\it topologically small} if there exists a constant $L\in\Nb$ such that whenever $s_0 , \dots , s_L$ are distinct elements of $G$ one has $s_0 A\cap\dots\cap s_L A = \emptyset$.
We say that the action has the {\it topological small boundary property} if there is a basis for the topology on $X$ consisting of open sets the boundaries of which are topologically small.
\end{definition}

\begin{remark}
It is clear from the definition that every topologically small set has vanishing upper density when $G$ is infinite.
In particular, the topological small boundary property implies the small boundary property for actions of infinite groups. Note also that the union of finitely many topologically small sets is again topologically small, a fact which will be used without further mention. 
\end{remark}

By closely examining the proof of \ref{thm:SBP:1}$\Rightarrow$\ref{thm:SBP:2} in Theorem \ref{thm:SBP} one sees that an analogous construction can be used to obtain the following.
Since the proof is very similar to before, we omit the details.

\begin{theorem} \label{thm:TSBP-extension}
Let $\alpha: G\curvearrowright X$ be an action with the topological small boundary property.
Then there exists an extension $\pi: (Z,\gamma)\to (X,\alpha)$ which is measure-isomorphic over singleton fibres such that $Z$ is totally disconnected and for all disjoint clopen sets $W_1, W_2\subseteq Z$ the intersection $\pi(W_1)\cap\pi(W_2)$ is topologically small.
\end{theorem}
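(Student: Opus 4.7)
The plan is to mirror the construction from the proof of \ref{thm:SBP:1}$\Rightarrow$\ref{thm:SBP:2} of Theorem \ref{thm:SBP}, upgrading the measure-theoretic smallness of partition boundaries at every stage to topological smallness. Concretely, for each $n \in \Nb$ I would invoke the topological small boundary property to pick an open cover $\{U_{n,1},\dots,U_{n,m_n}\}$ of $X$ by sets of diameter at most $1/n$ with each $\partial U_{n,j}$ topologically small, and then define $\cP_n$ by the same recursive difference-closure procedure, so that $\partial \cP_n \subseteq \bigcup_j \partial U_{n,j}$. The refinement partitions $\cQ_k = \cP_k \vee \bigvee_{s \in F_{k-1}} \alpha_s(\cQ_{k-1})$, the inverse-limit space $Z = \varprojlim(\cQ_k, r_k)$, the factor map $\pi: Z \to X$ (arising from the decreasing one-point intersections $\bigcap_k C_k$), and the action $\gamma$ are then defined verbatim; continuity, surjectivity, $G$-equivariance, and total disconnectedness of $Z$ (with canonical clopen basis $\cB = \{r_{\infty,k}^{-1}(A) : k \geq 1,\, A \in \cQ_k\}$) carry over without change.

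The crucial new ingredient is to show that each $\partial \cQ_k$ is itself topologically small. For this I would record two stability properties of the class of topologically small subsets of $X$: it is closed under finite unions (by a pigeonhole argument, so that if $A$ and $B$ have smallness constants $L_A$ and $L_B$ then $A \cup B$ has constant $L_A + L_B + 1$) and under translation by arbitrary elements of $G$ (since right multiplication is a bijection of $G$). Combined with the elementary inclusion $\partial(\cP \vee \cP') \subseteq \partial \cP \cup \partial \cP'$ for regular closed partitions, which follows from $\interior(C_1 \cap C_2) = \interior(C_1) \cap \interior(C_2)$, an induction on $k$ then gives that $\partial \cQ_k$ is contained in a finite union of $G$-translates of the topologically small sets $\partial \cP_j$ for $j \leq k$, and hence is itself topologically small.

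Both assertions of the theorem follow cleanly. Given disjoint clopen $W_1, W_2 \subseteq Z$, compactness lets me choose $k$ large enough so that $W_i = \bigsqcup_{A \in \cA_i} r_{\infty,k}^{-1}(A)$ for some disjoint subsets $\cA_1, \cA_2 \subseteq \cQ_k$; using $\pi(r_{\infty,k}^{-1}(A)) = A$ this yields
\[
\pi(W_1) \cap \pi(W_2) \ \subseteq\ \bigcup_{A_1 \in \cA_1,\, A_2 \in \cA_2} A_1 \cap A_2 \ \subseteq\ \partial \cQ_k,
\]
which is topologically small. For measure-isomorphism over singleton fibres I would verify condition (ii) of Proposition \ref{P-bijective dyn} on the basis $\cB$: for $W = r_{\infty,k}^{-1}(A)$ the set $\pi(W) \cap \pi(Z \setminus W)$ again lies in $\partial \cQ_k$, and (in the substantive case of infinite $G$) the remark following Definition \ref{def:top-small} combined with Proposition \ref{P-D sup} forces $\nu(\partial \cQ_k) = 0$ for every $\nu \in M_G(X)$, so Proposition \ref{P-bijective dyn} applies. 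The main technical point is the inductive bookkeeping of topological-smallness constants in the middle paragraph; beyond this, no new conceptual ingredient over the proof of Theorem \ref{thm:SBP} is required.
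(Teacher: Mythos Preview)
Your proposal is correct and takes essentially the same approach as the paper, which simply refers back to the construction in the proof of \ref{thm:SBP:1}$\Rightarrow$\ref{thm:SBP:2} of Theorem~\ref{thm:SBP} and omits the details. You have correctly identified and supplied the one additional ingredient needed beyond that proof---the inductive verification that each $\partial\cQ_k$ remains topologically small via closure under finite unions, $G$-translates, and the inclusion $\partial(\cP\vee\cP')\subseteq\partial\cP\cup\partial\cP'$---and your derivation of both conclusions from this fact is accurate.
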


\begin{lemma} \label{lem:almost-divisible}
Suppose that $G$ is infinite and amenable.
Let $G\curvearrowright X$ be a free action with the small boundary property.
Let $U\subseteq X$ be an open set with $\underline{D}(U)>0$.
Then for every $m\in\mathbb N$ and $\eta>0$ there exist pairwise disjoint open sets $U_1,\dots,U_m\subseteq U$ satisfying 
\[
\underline{D}(U_i) \geq \frac1m \underline{D}(U)-\eta
\] 
for all $i=1,\dots,m$. 
\end{lemma}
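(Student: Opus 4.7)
The plan is to lift the problem to a zero-dimensional extension, where almost finiteness in measure yields clopen castles and open sets admit uniform clopen approximations, carry out the partition tower-by-tower there, and then push the construction down to $X$ via the measure-isomorphism over singleton fibres.

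First, by the implication \ref{thm:SBP:1}$\Rightarrow$\ref{thm:SBP:2} of Theorem~\ref{thm:SBP}, I would fix an extension $\pi\colon(Z,\gamma)\to(X,\alpha)$ which is measure-isomorphic over singleton fibres with $Z$ zero-dimensional; the action $\gamma$ is then automatically free. Put $\tilde U=\pi^{-1}(U)$, which is open in $Z$, and observe that the surjectivity of $\pi_*\colon M_G(Z)\to M_G(X)$ recorded at the end of Section~\ref{S-extn} combined with Proposition~\ref{P-D sup} gives $\underline D_Z(\tilde U)=\underline D(U)$. Writing $\tilde U$ as an increasing union $\tilde C_1\subseteq\tilde C_2\subseteq\cdots$ of clopen sets, I would verify that $\inf_{\tilde\mu\in M_G(Z)}\tilde\mu(\tilde C_l)\to\underline D(U)$ as $l\to\infty$: the continuity of each $\tilde\mu\mapsto\tilde\mu(\tilde C_l)$ on the compact space $M_G(Z)$, combined with monotone convergence inside each fixed measure, forces any weak$^*$ cluster point of approximate minimisers to witness the infimum of $\tilde\mu(\tilde U)$. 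Picking an $l$ accordingly, the clopen set $\tilde V:=\tilde C_l\subseteq\tilde U$ then satisfies $\underline D_Z(\tilde V)\geq\underline D(U)-\eta/2$.

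Next, for a small $\eps>0$ and for a finite $K\subseteq G$ and $\delta>0$ chosen so that every $(K,\delta)$-invariant subset of $G$ has cardinality at least $M/\eps$, where $M:=\max(m,\lceil 1/\eps\rceil)$ (possible since $G$ is infinite amenable), I would apply Theorem~\ref{T-zero dim} to $(Z,\gamma)$ to obtain a clopen castle $\{(W_j,T_j)\}_{j\in J}$ with $(K,\delta)$-invariant shapes whose remainder $R$ has $\overline D_Z(R)<\eps$. Each $W_j$ would then be refined into clopen pieces $W_{j,\sigma}$ indexed by the partition of $W_j$ according to membership in the sets $s^{-1}\tilde V$ for $s\in T_j$, so that on each refined tower $(W_{j,\sigma},T_j)$ every level $sW_{j,\sigma}$ is either contained in or disjoint from $\tilde V$; let $T_j^\sigma\subseteq T_j$ collect those $s$ for which the first alternative holds. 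For each $(j,\sigma)$ with $|T_j^\sigma|\geq M$, I would partition $T_j^\sigma$ into $m$ blocks $T_{j,1}^\sigma,\dots,T_{j,m}^\sigma$ of cardinality at least $(1/m-\eps)|T_j^\sigma|$ and set $\tilde U_k:=\bigsqcup_{(j,\sigma):\,|T_j^\sigma|\geq M}T_{j,k}^\sigma W_{j,\sigma}$ for $k=1,\dots,m$.

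A straightforward accounting now yields $\tilde\mu(\tilde U_k)\geq\underline D(U)/m-\eta$ for every $\tilde\mu\in M_G(Z)$: for the discarded pairs $(j,\sigma)$ with $|T_j^\sigma|<M$ the ratio $|T_j^\sigma|/|T_j|$ is below $\eps$ by our lower bound on $|T_j|$, so their total $\tilde\mu$-contribution to $\tilde V$ is at most $\eps$, and combined with $\tilde\mu(R)<\eps$ this leaves $\tilde\mu(\tilde V)$ concentrated, up to a contribution of order $\eps$, on the good subtowers, where the cardinality-balanced partition of $T_j^\sigma$ distributes the mass almost equally across the $m$ indices. Finally I would define $U_k:=X\setminus\pi(Z\setminus\tilde U_k)$, which is open because $Z\setminus\tilde U_k$ is clopen with closed image under the continuous map $\pi$; it is pairwise disjoint across $k$ by the disjointness of the $\tilde U_k$ together with the surjectivity of $\pi$, and contained in $U$ because $\pi^{-1}(U_k)\subseteq\tilde U_k\subseteq\pi^{-1}(U)$. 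Applying Proposition~\ref{P-bijective dyn}(v) to the closed set $Z\setminus\tilde U_k$ for any lift $\tilde\mu$ of a given $\mu\in M_G(X)$ gives $\mu(U_k)=\tilde\mu(\tilde U_k)$, which transfers the density estimate from $Z$ to $X$. I expect the main technical obstacle to be the uniform clopen approximation of $\tilde U$ in the first step, where the compactness of $M_G(Z)$ does the essential work; the tower partitioning step that follows is then largely a bookkeeping exercise.
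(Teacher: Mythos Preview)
Your argument is correct, but it takes a different route from the paper's proof. You lift the problem to the zero-dimensional extension $(Z,\gamma)$ of Theorem~\ref{thm:SBP}, work entirely with clopen sets there (clopen approximation of $\tilde U$, clopen castle from Theorem~\ref{T-zero dim}, clopen refinement of towers against $\tilde V$), and then push the resulting clopen sets $\tilde U_k$ back down to $X$ using Proposition~\ref{P-bijective dyn}(v). The paper instead stays in $X$ throughout: it invokes Theorem~\ref{T-af in measure} directly to get an \emph{open} castle in $X$ with levels of small diameter, and uses Proposition~\ref{P-portmanteau} to choose an open $W$ with $\overline W\subseteq U$ and $\underline D(W)$ close to $\underline D(U)$; the diameter bound on the levels then guarantees that each level either misses $W$ or lies inside $U$, which replaces your clopen refinement step. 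Your approach has the virtue of reducing everything to clopen-set bookkeeping upstairs, at the cost of re-traversing the extension machinery that is already packaged into Theorem~\ref{T-af in measure}; the paper's approach is shorter because the diameter condition in Definition~\ref{D-af in measure} is designed exactly to make this kind of ``level-versus-open-set'' dichotomy available without passing to a cover.
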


\begin{proof}
Let $m\geq 2$ and $\eta>0$ be given with $\eta<\underline{D}(U)$.
Setting $\theta = \underline{D}(U)-\eta/2$, by Proposition~\ref{P-portmanteau} (taking $A = X\setminus U$ there)
we may find an open set $W$ with $\overline{W} \subseteq U$ such that 
\begin{equation} \label{eq:W-theta}
\underline{D}(W)\geq \theta.
\end{equation}
Let $\delta>0$ be so small that the open $\delta$-neighbourhood of $\overline{W}$ is contained in $U$.
As $G$ is infinite, by our choice of $W$ we may find an $\varepsilon>0$ and a finite set $K\subseteq G$ such that every $(K,\varepsilon)$-invariant finite set $S\subseteq G$ has cardinality greater than $4m(1+\eta)/(\theta\eta )$ and satisfies 
\[
\inf_{x\in X} \frac{|Sx\cap W|}{|S|} \geq \frac{\theta}{1+\eta} ,
\]
in which case
\[
\inf_{x\in X} |Sx\cap W| \geq \frac{\theta}{1+\eta}|S| \geq \frac{4m}{\eta} .
\]

Since the action is almost finite in measure by Theorem~\ref{T-af in measure}, we can find an open castle consisting of towers $(V_j , S_j)$ for $j=1,\dots,N$ such that the shapes $S_j$ are $(K,\varepsilon)$-invariant, the levels all have diameter less than $\delta$, and such that
\begin{equation} \label{eq:castle-ld}
\underline{D}\big( \bigsqcup_{j=1}^N S_j V_j \big) \geq 1-\eta /4.
\end{equation}
It follows from the diameter condition that every level of the castle is either disjoint from $W$ or lies entirely in $U$.
Set
\begin{equation} \label{eq:Sj-prime}
S_j' = \{ s\in S_j : sV_j \cap W \neq \emptyset \} 
\end{equation}
and observe that
\[
|S_j'| \geq \inf_{x\in X} |S_jx\cap W| \geq \frac{4m}{\eta}.
\]
We may then find a partition
\[
S_j' = S_{j,1}'\sqcup\dots\sqcup S_{j,m}'
\]
with $|S_{j,i}'|\geq \lfloor |S_j'|/m \rfloor$, so that 
\[
\frac{|S_{j,i}'|}{|S_j'|} \geq \frac{1}{|S_j'|}\Big(\frac{|S_j'|}{m}-1\Big) \geq \frac1m - \frac{\eta}{4}
\]
for each $i=1,\dots,m$.
Set
\[
U_i = \bigsqcup_{j=1}^N \bigsqcup_{s\in S_{j,i}'} sV_j .
\]
By construction, we have for all $i=1,\dots,m$ and $\mu\in M_G (X)$  that
\begin{align*}
\mu(U_i) 
= \sum_{j=1}^N |S_{j,i}'| \mu(V_j) 
&= \sum_{j=1}^N \frac{|S_{j,i}'|}{|S_j'|} |S_j'| \mu(V_j) \\
&\geq \Big( \frac1m-\frac{\eta}{4} \Big) \sum_{j=1}^N |S_j'|\mu(V_j) \\
&\stackrel{\eqref{eq:Sj-prime},\eqref{eq:castle-ld}}{\geq} \Big( \frac1m-\frac{\eta}{4} \Big) \Big( \mu(W)-\frac{\eta}{4} \Big) .
\end{align*}
Appealing to Proposition~\ref{P-D sup} we thus conclude that
\begin{align*}
\underline{D}(U_i) 
\geq \Big( \frac1m-\frac{\eta}{4} \Big)\Big( \underline{D}(W)-\frac{\eta}{4} \Big) 
&\stackrel{\eqref{eq:W-theta}}{\geq} \Big( \frac1m-\frac{\eta}{4} \Big)\Big( \underline{D}(U)-\frac{3\eta}{4} \Big) \\
&\geq \frac1m \underline{D}(U) - \eta. \qedhere
\end{align*}
\end{proof}

\begin{lemma} \label{lem:top-small-implies-thin}
Suppose that $G$ is amenable.
Let $G\curvearrowright X$ be a free action with the small boundary property.
Let $U\subseteq X$ be an open set with $\underline{D}(U)>0$ and let $C\subseteq X$ be
a closed set which is topologically small. Then $C\prec U$.
\end{lemma}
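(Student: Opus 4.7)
Let $L$ denote the constant from Definition~\ref{def:top-small} witnessing that $C$ is topologically small, so that $|\{t\in G:tx\in C\}|\leq L$ for every $x\in X$. In particular $\overline{D}(C)=0$ when $G$ is infinite, which I will assume (the finite case being trivial under the small boundary property, since it forces $X$ to be totally disconnected). The plan is to combine this pointwise $L$-boundedness with Lemma~\ref{lem:almost-divisible} and the almost finiteness in measure provided by Theorem~\ref{T-af in measure}.

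First I would apply Lemma~\ref{lem:almost-divisible} with $m=L+1$ and a small auxiliary parameter to split $U$ into pairwise disjoint open subsets $U_0,\ldots,U_L\subseteq U$, each of lower density at least some common $\alpha>0$. Next, invoking almost finiteness in measure, I would extract an open castle $\{(V_j,S_j)\}_{j\in J}$ whose shapes are so $(K,\delta)$-invariant that Lemma~\ref{L-density Folner} forces $|S_jx\cap U_i|\geq(\alpha/2)|S_j|$ uniformly in $x\in X$ and $i\in\{0,\ldots,L\}$, and whose levels have so small a diameter that the $U_i$- and $C$-patterns on $S_j$ are locally constant on each level. Using Proposition~\ref{P-portmanteau} I would also place $C$ inside an open neighborhood of arbitrarily small upper density and shrink the level diameter further so that every castle level meeting $C$ lies inside this neighborhood.

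Inside each tower I would form the open refinement
\[
V_j^H=V_j\cap\bigcap_{s\in S_j\setminus H}s^{-1}(X\setminus C),\qquad H\subseteq S_j,\ |H|\leq L,
\]
which covers $V_j$ since for each $y\in V_j$ the hit set $\{s\in S_j:sy\in C\}$ has at most $L$ elements. On each $V_j^H$ only the levels indexed by $s\in H$ can meet $C$, so one can choose an injection $c:H\to\{0,\ldots,L\}$ and then, for each $s\in H$, greedily pick a landing spot $\sigma(s)\in S_j$ with $\sigma(s)V_j^H\subseteq U_{c(s)}$. The Folner lower bound provides $(\alpha/2)|S_j|$ candidates per color while the number of $(H,s)$-tickets per tower is bounded uniformly by a Lebesgue number argument on the cover of $X$ by the neighborhoods where these patterns are locally constant, so for $|S_j|$ sufficiently large one can keep all chosen $\sigma$-values distinct within each tower. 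The open sets $sV_j^H$ together with the translations $\sigma(s)s^{-1}$ then cover the portion of $C$ inside $\bigsqcup_j S_jV_j$ and send it into $U$ with pairwise disjoint images: disjointness within a tower follows from the distinctness of the $\sigma$-values (different $\sigma$ land in different tower levels $\sigma V_j$, which are pairwise disjoint), and across towers from the pairwise disjointness of the tower saturations.

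The main obstacle is the combinatorial bookkeeping just outlined: one has to fix the Lebesgue number (and hence the uniform bound on the number of refinement pieces per tower) before choosing the Folner shapes, so that the greedy selection of distinct $\sigma$-values is guaranteed. A secondary obstacle is to ensure that no portion of $C$ is stranded in the castle's remainder; this is overcome by choosing the remainder's upper density strictly smaller than that of the prescribed neighborhood of $C$ and using the diameter condition together with compactness to force $C$ to sit entirely inside $\bigsqcup_j S_jV_j$.
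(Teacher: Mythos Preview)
Your proposal has a genuine gap in the combinatorics of the landing spots. The open sets $V_j^H$ are not pairwise disjoint: any $y\in V_j$ whose hit set is empty (the generic situation, since $\overline D(C)=0$) lies in \emph{every} $V_j^H$. Hence for the images $\sigma(s)V_j^H$ to be pairwise disjoint within a tower you must choose a \emph{distinct} value of $\sigma\in S_j$ for every relevant pair $(H,s)$ with $|H|\le L$ and $s\in H$. There is no uniform bound on the number of such pairs: the number of hit patterns that can occur in a tower is, a priori, of order $\binom{|S_j|}{L}$, while your pool of landing spots in each colour has size only about $(\alpha/2)|S_j|$. The Lebesgue number argument you invoke cannot rescue this, because the ``patterns'' you would need to freeze are patterns over the shape $S_j$, and $S_j$ must be chosen \emph{after} the level diameter (and hence after any Lebesgue number) so as to be sufficiently invariant and large; the circularity you flag at the end is real and not resolved by the sketch. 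A second, independent gap is the treatment of the remainder: comparing the upper density of the remainder with that of an open neighbourhood of $C$ yields no containment whatsoever---a single point of $C$ (topologically small with $L=1$) can lie in a closed remainder of arbitrarily small density.

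The paper avoids both difficulties by inducting on the smallness constant $L$ rather than working inside a single castle. The base case $L=1$ is elementary: the $G$-translates of $C$ are pairwise disjoint, and finitely many translates of $U$ cover $X$, so one covers $C$ by the sets $sU\cap V$ for $s$ in this finite set and $V$ a small neighbourhood of $C$. For the inductive step one uses Lemma~\ref{lem:almost-divisible} to split $U$ into countably many disjoint open pieces $U_s$ ($s\in G$) of positive lower density, observes that each $C\cap sC$ with $s\ne e$ has smallness constant at most $L-1$, and applies the induction hypothesis to get open $V_s\supseteq C\cap sC$ with $V_s\prec U_s$; the leftover closed set $C\setminus\bigcup_{s\ne e}V_s$ then has smallness constant $1$ and is handled by the base case. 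No tower combinatorics are needed.
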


\begin{proof}
Let $L\in\mathbb N$ be the smallness constant for $C$ as in Definition \ref{def:top-small}.
We will prove the assertion by induction in $L$.
First suppose that $L=1$, which means that the images of $C$ under the action of $G$ are pairwise disjoint.
As $\underline{D}(U)>0$, there exists some finite set $F\subseteq G$ with $\bigcup_{s\in F} sU=X$.
We may assume that $F=F^{-1}$ by replacing $F$ with $F\cup F^{-1}$.
Choose an open neighbourhood $V$ of $C$ such that the sets $sV$ for $s\in F$ are pairwise disjoint.
Then define $V_s=sU\cap V$ for $s\in F$, which yields an open cover of $C$ with the property that 
the sets $s^{-1}V_s$ for $s\in F$ are pairwise disjoint and contained in $U$. 
This verifies $C\prec U$ for the base step in the induction.

Now assume that the assertion holds for sets with smallness constant at most some given $L\in\Nb$
and let us show that it holds for sets with smallness constant $L+1$.
Let $C\subseteq X$ be a closed topologically small set with smallness constant $L+1$.

Applying Lemma \ref{lem:almost-divisible} repeatedly, we find pairwise disjoint open sets $U_s\subset U$ indexed by $s\in G$ with $\underline{D}(U_s)>0$ for all $s$.
For every $s\neq e$, the set $C\cap sC$ is clearly topologically small with constant $L$, and so by the induction hypothesis we have $C\cap sC\prec U_s$ for all $s\neq e$.
For every $s\neq e$ take an open neighbourhood $V_s$ of $C\cap sC$ such that $V_s \prec U_s$,
for example the union of a collection of open sets covering $C\cap sC$ which witness the subequivalence $C\cap sC\prec U_s$.

We note that the complement $C\setminus\bigcup_{s\in G\setminus\{e\}} sC$ is topologically small with constant $L=1$, and so the same is true of the closed set $C\setminus\bigcup_{s\in G\setminus\{e\}} V_s$, which it contains.
It follows by the base case of the induction hypothesis that $C\setminus \bigcup_{s\in G\setminus\{e\}} V_s \prec U_e$. Since
\begin{align*}
C &= \Big( \bigcup_{s\in G\setminus\{e\}} C\cap sC \Big)
\cup\Big( C\setminus \Big( \bigcup_{s\in G\setminus\{e\}} sC \Big) \Big) \\
&\subseteq \Big(\bigcup_{s\in G\setminus\{e\}} V_s \Big) \cup\Big( C\setminus\bigcup_{s\in G\setminus\{e\}} V_s \Big)
\end{align*}
we conclude using the compactness of $C$ that 
$C\prec \bigcup_{s\in G} U_s \subseteq U$.
\end{proof}

\begin{theorem}\label{thm:TSBP-almost-finite}
Suppose that $G$ is amenable,
and that every free action of $G$ on a zero-dimensional compact metrizable space is almost finite.
Then every free action of $G$ with the topological small boundary property is almost finite.
\end{theorem}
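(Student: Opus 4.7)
The plan is to reduce via Theorem~\ref{thm:TSBP-extension} to the zero-dimensional hypothesis, use almost finiteness upstairs to build a clopen castle, and push it down to $X$, absorbing the topologically small residual of the descent using Lemma~\ref{lem:top-small-implies-thin}.

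Let $\alpha:G\curvearrowright X$ be free with the topological small boundary property and fix parameters $n\in\Nb$, finite $K\subseteq G$, $\delta>0$. By Theorem~\ref{thm:TSBP-extension}, take an extension $\pi:(Z,\gamma)\to(X,\alpha)$ that is measure-isomorphic over singleton fibres, with $Z$ totally disconnected and $\pi(A_1)\cap\pi(A_2)$ topologically small for all disjoint clopens $A_1,A_2\subseteq Z$; then $\gamma$ is free and hence almost finite by hypothesis. Fix $n_0>2n$ and apply almost finiteness of $\gamma$ with suitably small parameters (using Lemma~\ref{L-almost invt} to control shape invariance) to obtain a clopen castle $\{(V_i,S_i)\}_i$ in $Z$ with $(K,\delta)$-invariant shapes, levels whose $\pi$-images have diameter $<\delta$, and $T_i\subseteq S_i$ with $|T_i|<|S_i|/n_0$ witnessing $R_Z:=Z\setminus\bigsqcup_iS_iV_i\prec\bigsqcup_iT_iV_i$. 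Choose $P_i\subseteq S_i\setminus T_i$ with $|P_i|\geq c|S_i|$ for some fixed $c>0$ and $|T_i|+|P_i|<|S_i|/n$, and set $S_i':=T_i\sqcup P_i$. Then $\{(W_i,S_i)\}$ with $W_i:=X\setminus\pi(Z\setminus V_i)$ is an open castle in $X$ of the required form; write $R_X:=X\setminus\bigsqcup_iS_iW_i$.

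The remaining task is to show $R_X\prec\bigsqcup_iS_i'W_i$. Apply $R_Z\prec\bigsqcup_iT_iV_i$ to the clopen set $R_Z$ itself to obtain a finite clopen cover $\{U_j\}$ of $R_Z$ with $t_j\in G$ such that the $t_jU_j$ are pairwise disjoint; after subdivision assume each $t_jU_j\subseteq s_jV_{i_j}$ for some $s_j\in T_{i_j}$ and that $\{U_j\}$ is a clopen partition of $R_Z$, so $\mathcal{P}:=\{U_j\}\cup\{sV_i\}_{i,\,s\in S_i}$ is a clopen partition of $Z$. The set
\[
B:=\bigcup_{Q\neq Q'\in\mathcal{P}}\pi(Q)\cap\pi(Q')\subseteq X
\]
is a finite union of topologically small closed sets, hence closed and topologically small. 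Setting $O_Q:=X\setminus\pi(Z\setminus Q)$ for $Q\in\mathcal{P}$, one checks $sW_i=O_{sV_i}$ and $X=B\sqcup\bigsqcup_{Q\in\mathcal{P}}O_Q$, giving $R_X=B\sqcup\bigsqcup_jO_{U_j}$. Since $\pi^{-1}(t_jO_{U_j})\subseteq t_jU_j\subseteq s_jV_{i_j}$, each $t_jO_{U_j}\subseteq s_jW_{i_j}\subseteq\bigsqcup_iT_iW_i$, and these translates are pairwise disjoint.

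To cover $B$, I use that measure-isomorphism over singleton fibres (Proposition~\ref{P-bijective dyn}(v), applied to clopens) yields $\mu(W_i)=\nu(V_i)$ whenever $\mu=\pi_*\nu$; combined with $\pi_*M_G(Z)=M_G(X)$, this gives $\mu\bigl(\bigsqcup_iS_iW_i\bigr)\geq 1-1/n_0$ for every $\mu\in M_G(X)$ and hence $\underline{D}\bigl(\bigsqcup_iP_iW_i\bigr)\geq c(1-1/n_0)>0$. Lemma~\ref{lem:top-small-implies-thin} then provides an open cover $\{U'_\ell\}$ of $B$ with $t'_\ell\in G$ such that the $t'_\ell U'_\ell$ are pairwise disjoint inside $\bigsqcup_iP_iW_i$. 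The combined family $\{O_{U_j}\}\cup\{U'_\ell\}$ is an open cover of $R_X$ whose distinguished translates lie pairwise-disjointly in the disjoint union $\bigsqcup_iT_iW_i\sqcup\bigsqcup_iP_iW_i=\bigsqcup_iS_i'W_i$, establishing almost finiteness of $\alpha$. The main obstacle is the descent step: the naive pushforward of the $Z$-cover misses those $x\in X$ whose $\pi$-fibres straddle distinct cells of $\mathcal{P}$, and the argument only closes because this residual is precisely $B$, which is topologically small by Theorem~\ref{thm:TSBP-extension} and can be absorbed into the reserve $\bigsqcup_iP_iW_i$, whose positive lower density is secured by the extra slack $n_0>2n$.
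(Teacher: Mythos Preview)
Your proof is correct, but it takes a more circuitous route than the paper's. The key simplification you are missing is that for free actions on zero-dimensional spaces, almost finiteness is equivalent to the existence of clopen castles with $(K,\varepsilon)$-invariant shapes that \emph{partition} the space (see the last paragraph of Section~\ref{S-notation}). The paper exploits this: once the clopen castle $\{(V_j,S_j)\}$ partitions $Z$, there is no remainder $R_Z$ to worry about, and the entire remainder $R_X$ downstairs is contained in the topologically small set $\bigcup_{j,s}\pi(sV_j)\cap\pi(Z\setminus sV_j)$. A single application of Lemma~\ref{lem:top-small-implies-thin} with target $\bigcup_j B_j$ (the union of bases, which has positive lower density) then finishes the proof, with $S_j'=\{e\}$.

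Your argument instead carries along the upstairs remainder $R_Z$, pushes the witnessing subequivalence $R_Z\prec\bigsqcup_iT_iV_i$ down through the map $Q\mapsto O_Q$, and reserves an extra block of levels $P_i$ (with the slack $n_0>2n$) to absorb the topologically small set $B$ separately. This works---the decomposition $X=B\sqcup\bigsqcup_{Q\in\mathcal P}O_Q$ is correct, the translates $t_jO_{U_j}$ are indeed pairwise disjoint in $\bigsqcup_iT_iW_i$, and the measure computation giving $\underline{D}(\bigsqcup_iP_iW_i)>0$ is valid via Proposition~\ref{P-bijective dyn} and the surjectivity of $\pi_*$---but the machinery of $T_i$, $P_i$, the clopen refinement of the subequivalence witnesses, and the two-target bookkeeping is all avoidable. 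The paper's route buys a substantially shorter and cleaner argument; yours buys nothing extra, though it does illustrate that one need not invoke the partitioning characterization of almost finiteness in the zero-dimensional case.
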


\begin{proof}
Let $X$ be a compact metrizable space
and $\alpha : G\curvearrowright X$ a free action with the topological small boundary property.
We apply Theorem \ref{thm:TSBP-extension} to obtain an extension $\pi: (Z,\gamma)\to(X,\alpha)$ which is measure-isomorphic over singleton fibres such that $Z$ is zero-dimensional and $\pi(W_1)\cap\pi(W_2)$ is topologically small for all pairs of disjoint clopen sets $W_1, W_2\subseteq Z$.

As $\alpha$ is free, it is clear that $\gamma$ is free.
By assumption, $\gamma$ is almost finite. Since $Z$ is zero-dimensional, it follows by the comments in the last paragraph 
of Section~\ref{S-notation} that given an $\varepsilon>0$ and a finite set $K\subseteq G$
we can find a clopen castle in $Z$ consisting of towers $(V_j , S_j)$ for $j=1,\dots,m$ so that each shape $S_j$ is $(K,\varepsilon)$-invariant and the castle partitions $Z$.
Without loss of generality, we may assume that $e\in S_j$ for all $j$.
Choose $\delta>0$ small enough so that sets of diameter at most $\delta$ in $Z$ are mapped under $\pi$ to sets of diameter at most $\varepsilon$ in $X$.
By subdividing the castle if necessary, we may assume that all of the tower levels have diameter at most $\delta$.

Define
\[
B_j = \pi(V_j)\setminus\pi(Z\setminus V_j).
\]
Then $(B_j , S_j )$ is an open tower in $X$ for the action $\alpha$, and together these towers form a castle
whose levels have diameter at most $\varepsilon$.
The remainder of the castle is contained in the closed set
\[
R = \bigcup_{j=1}^m \bigcup_{s\in S_j} \pi(sV_j )\cap\pi(Z\setminus sV_j ),
\]
which by our choice of $\pi$ is topologically small.
Since $\pi$ is measure-isomorphic over singleton fibres, by Proposition~\ref{P-D sup} and the fact that 
every $G$-invariant Borel probability measure on $Z$ assigns the levels of an open tower the same values we obtain
\[
\underline{D}\Big(\bigcup_{j=1}^m B_j\Big) = \underline{D}\Big(\bigcup_{j=1}^m V_j\Big) 
\geq \min_{1\leq j\leq m} \frac{1}{|S_j|} > 0 ,
\]
so that $R \prec \bigcup_{j=1}^m B_j$ by Lemma \ref{lem:top-small-implies-thin}. Thus $\alpha$ is almost finite.
\end{proof}

\begin{corollary}\label{cor:findim-almost-finite}
Suppose that $G$ is amenable,
and that every free action of $G$ on a zero-dimensional compact metrizable space is almost finite.
Then every free action of $G$ on a finite-dimensional compact metrizable space is almost finite.
\end{corollary}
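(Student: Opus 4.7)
The plan is to deduce this corollary directly from Theorem~\ref{thm:TSBP-almost-finite} by verifying its hypothesis, namely that the action in question has the topological small boundary property. Indeed, Theorem~\ref{thm:TSBP-almost-finite} already establishes that under the standing assumption that every free action of $G$ on a zero-dimensional compact metrizable space is almost finite, every free action with the topological small boundary property is almost finite. So all that remains is to show that freeness together with finite covering dimension of $X$ automatically forces the topological small boundary property.

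For this I would invoke Theorem~3.8 of \cite{Sza15}, which asserts precisely that every free action of a countable group on a compact metrizable space of finite covering dimension has the topological small boundary property (in the terminology of Definition~\ref{def:top-small}). This is the only nontrivial external input needed: the smallness constant $L$ in Definition~\ref{def:top-small} can be taken to depend linearly on the covering dimension, essentially because freeness lets one separate a point and any finite orbit segment by sufficiently small open neighbourhoods, and finite covering dimension bounds the multiplicity of refinements of any open cover.

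Given this, the proof is a one-line combination: if $\alpha:G\curvearrowright X$ is a free action with $X$ of finite covering dimension, then by Theorem~3.8 of \cite{Sza15} the action $\alpha$ has the topological small boundary property, and hence by Theorem~\ref{thm:TSBP-almost-finite} it is almost finite. There is no real obstacle here beyond referencing the external result correctly; the substantive work has already been done in Theorem~\ref{thm:TSBP-extension} (producing the zero-dimensional measure-isomorphic extension) and in Theorem~\ref{thm:TSBP-almost-finite} (using Lemma~\ref{lem:top-small-implies-thin} to absorb the topologically small remainder via dynamical subequivalence).
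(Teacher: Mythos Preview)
Your proposal is correct and matches the paper's own proof exactly: the paper simply cites Theorem~\ref{thm:TSBP-almost-finite} together with Theorem~3.8 of \cite{Sza15}, which is precisely the combination you describe.
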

\begin{proof}
This is a consequence of Theorem~\ref{thm:TSBP-almost-finite} and Theorem~3.8 of \cite{Sza15}.
\end{proof}

\section{Applications to classifiability}\label{S-classification}

Recently in \cite{DowZha17} Downarowicz and Zhang showed that if every finitely generated
subgroup of $G$ has subexponential growth then every free action of $G$ on
a zero-dimensional compact metrizable space is almost finite, i.e., 
there exist clopen castles which have F{\o}lner shapes and partition the space
(see the last paragraph of Section~\ref{S-notation}).
Conley, Jackson, Marks, Seward, and Tucker-Drob also observed independently that the tiling argument
of \cite{ConJacKerMarSewTuc18} can be adapted to the topological 
setting to establish this fact. In this case one first notes,
using the clopen version of the Ornstein--Weiss tower theorem (Theorem~\ref{T-zero dim}),
that the problem can be reduced to a matching argument on a bipartite graph. 
The graph is constructed from a clopen Ornstein--Weiss castle 
by using a fixed symmetric F{\o}lner set to connect points in the leftover part of the space, 
whose Banach density $d$ is taken to be extremely small, 
to points in some proportionally small collection of tower levels 
whose union has Banach density at least $2d$. 
With this graph one can progressively build better and better matchings through a process
of simultaneously flipping a given matching along a pairwise disjoint collection
of augmenting paths. Since subexponential growth implies that there is a 
uniform upper bound on the length of (minimal) augmenting paths, this process
will terminate after finitely many steps, resulting in a clopen matching. One thus obtains
a clopen castle which partitions the space, with the apppoximate invariance of the shapes
of the original castle being approximately preserved due to the proportional smallness 
of the collection of tower levels used to build the bipartite graph.

Combined with Corollary~\ref{cor:findim-almost-finite}, this result on almost finiteness
yields the following theorem. Note that the conclusion is still valid if we replace the hypothesis 
$\dim(X)<\infty$ with the weaker assumption that the action has the
topological small boundary, as we can appeal to Theorem~\ref{thm:TSBP-almost-finite}
instead.

\begin{theorem}\label{T-subexp}
Suppose that every finitely generated
subgroup of $G$ has subexponential growth. 
Let $X$ be a compact metrizable space with finite covering dimension.
Then every free action $G\curvearrowright X$ is almost finite.
\end{theorem}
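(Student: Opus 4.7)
The plan is to assemble the theorem as a direct consequence of two ingredients: the zero-dimensional case supplied by Downarowicz--Zhang (or the Conley--Jackson--Marks--Seward--Tucker-Drob matching argument sketched at the beginning of this section), and the dimension-reduction result of Corollary~\ref{cor:findim-almost-finite}. Before invoking these, I would first note that the hypothesis on $G$ forces amenability: subexponential growth implies amenability, and the class of amenable groups is closed under directed unions, so the amenability of every finitely generated subgroup propagates to $G$ itself. This puts us in the setting of the preceding sections.

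Next, I would cite the Downarowicz--Zhang result from \cite{DowZha17}, which asserts that under the subexponential growth hypothesis on finitely generated subgroups, every free action of $G$ on a zero-dimensional compact metrizable space is almost finite. Equivalently, one may appeal to the clopen adaptation of the tiling/matching argument from \cite{ConJacKerMarSewTuc18} as outlined at the start of this section: the clopen form of the Ornstein--Weiss tower theorem (Theorem~\ref{T-zero dim}) reduces the problem to a bipartite matching where subexponential growth bounds the length of minimal augmenting paths, so iterated augmentation terminates in finitely many steps and yields a clopen castle with F{\o}lner shapes that partitions the space.

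With the zero-dimensional hypothesis of Corollary~\ref{cor:findim-almost-finite} thereby verified, I would apply that corollary to conclude that every free action of $G$ on a finite-dimensional compact metrizable space is almost finite, which is exactly the statement of the theorem. For the stronger variant alluded to in the footnote-style remark following the theorem statement (replacing $\dim(X)<\infty$ by the topological small boundary property), I would substitute Theorem~\ref{thm:TSBP-almost-finite} in place of Corollary~\ref{cor:findim-almost-finite}, since that theorem accepts precisely the topological small boundary property as hypothesis on the action.

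The assembly itself is essentially formal; the hard part is not ours to do here, and it lives entirely in the zero-dimensional matching argument. Within that argument, the sole place where subexponential growth is essential is in producing a uniform upper bound on the length of minimal augmenting paths through balls in $G$; without this bound, the inductive augmentation procedure need not terminate. Everything else in our proof is a transfer of structure from a zero-dimensional extension down to $X$, which has already been carried out via the measure-isomorphic-over-singleton-fibres extensions developed in Section~\ref{S-extn}.
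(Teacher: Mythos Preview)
Your proposal is correct and follows essentially the same approach as the paper: cite the Downarowicz--Zhang (or CJMST-D) zero-dimensional result and then apply Corollary~\ref{cor:findim-almost-finite}. Your explicit observation that the subexponential-growth hypothesis forces amenability of $G$ is a worthwhile addition, since the paper leaves this implicit even though amenability is a stated hypothesis of Corollary~\ref{cor:findim-almost-finite}.
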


For a free minimal action $G\curvearrowright X$ on a compact metrizable space, 
almost finiteness implies that $C(X)\rtimes G$ is $\cZ$-stable by Theorem~12.4 of \cite{Ker17}
and hence that $C(X)\rtimes G$ has finite nuclear dimension by Theorem~A of \cite{CasEviTikWhiWin18}.
In this case $C(X)\rtimes G$, given that it satisfies the UCT \cite{Tu99}, 
falls under the scope of the classification theorem recorded
as Corollary~D in \cite{TikWhiWin17}. This gives us the following result,
which again applies equally well if one assumes the topological small boundary property
on the action instead of $\dim(X)<\infty$.
The reader may compare this result to Theorem~8.8 of \cite{SzaWuZac17}.

\begin{theorem}\label{T-classification}
The crossed products of free minimal actions $G\curvearrowright X$
where $X$ is a compact metrizable space with finite covering dimension and
$G$ is such that each of its finitely generated subgroups has subexponential growth
are classified by the Elliott invariant (ordered $K$-theory paired with tracial states)
and are simple ASH algebras of topological dimension at most 2.
\end{theorem}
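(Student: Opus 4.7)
The plan is simply to assemble the chain of implications foreshadowed in the discussion preceding the statement, using Theorem~\ref{T-subexp} as the dynamical input. First, by Theorem~\ref{T-subexp}, every free action $G \curvearrowright X$ satisfying the hypotheses is almost finite. Since the action is free and minimal, Theorem~12.4 of \cite{Ker17} then shows that the crossed product $C(X) \rtimes G$ is $\cZ$-stable. The subexponential growth hypothesis on finitely generated subgroups implies that $G$ itself is amenable (amenability is a local property), so $C(X) \rtimes G$ is nuclear; freeness and minimality give simplicity; and the algebra is clearly unital and separable.

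Next, I would invoke Theorem~A of \cite{CasEviTikWhiWin18}, which upgrades $\cZ$-stability to finite nuclear dimension for simple separable unital nuclear C$^*$-algebras. Combined with the fact that crossed products by amenable groups satisfy the UCT by \cite{Tu99}, this places $C(X) \rtimes G$ squarely within the scope of Corollary~D of \cite{TikWhiWin17}, which asserts that any simple separable unital nuclear C$^*$-algebra of finite nuclear dimension satisfying the UCT is classified up to isomorphism by the Elliott invariant (ordered $\K$-theory paired with tracial states). This delivers the classification statement.

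For the final claim that these crossed products are simple ASH algebras of topological dimension at most $2$, I would appeal to the range-of-invariant and model-construction side of the classification programme: the Elliott invariants realized by simple separable unital nuclear $\cZ$-stable C$^*$-algebras satisfying the UCT are exhausted by simple inductive limits of subhomogeneous algebras with topological dimension at most $2$, so the uniqueness half of classification identifies $C(X) \rtimes G$ with such a model. The main obstacle, if any, is purely bookkeeping: one must check that all hypotheses of the cited black-box results (unitality, separability, nuclearity, simplicity, UCT, $\cZ$-stability) are genuinely satisfied by the crossed product, and that the dynamical input from Theorem~\ref{T-subexp} is correctly matched to the regularity assumption in \cite{Ker17}. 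Everything else is citation.
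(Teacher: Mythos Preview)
Your proposal is correct and follows essentially the same route as the paper: Theorem~\ref{T-subexp} gives almost finiteness, Theorem~12.4 of \cite{Ker17} gives $\cZ$-stability, Theorem~A of \cite{CasEviTikWhiWin18} gives finite nuclear dimension, the UCT comes from \cite{Tu99}, and Corollary~D of \cite{TikWhiWin17} delivers both the classification and the ASH model. The paper's own proof is just the paragraph preceding the theorem statement, and your write-up is, if anything, slightly more careful about checking the standing hypotheses.
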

 
In the case that $G$ is finitely generated and has polynomial growth,
one can alternatively establish Theorem~\ref{T-subexp} by substituting
the following simpler proof of 
almost finiteness for free actions on zero-dimensional compact metrizable spaces.
We first observe, by considering the word-length metric with respect 
to a fixed finite generating set and using balls of sufficiently large radius, 
that such a $G$ satisfies the following property:
\begin{itemize}
\item[($\ast$)] there is a constant $c>0$
such that for every finite set $K\subseteq G$ and $\delta > 0$ there exists a nonempty 
finite set $F\subseteq G$ with $|F^{-1} F| \leq c|F|$ such that both
$F$ and $F^{-1} F$ are $(K,\delta )$-invariant.
\end{itemize}
It then suffices to combine the following two lemmas to obtain the desired conclusion.

\begin{lemma}\label{L-strong T}
Suppose that $G$ has property ($\ast$) with constant $c>0$. 
Let $G\curvearrowright X$ be a free action on a zero-dimensional compact metrizable space.
Then for all clopen sets $A,B\subseteq X$ satisfying $c\overline{D} (A) < \underline{D} (B)$
one has $A\prec B$.
\end{lemma}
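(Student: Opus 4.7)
The plan is to use property ($\ast$) to select a F{\o}lner set $F\subseteq G$ with $|F^{-1}F|\leq c|F|$ that uniformly witnesses both densities, construct a clopen Ornstein--Weiss castle with shape subordinate to $F$, realize most of $A\prec B$ by an injective matching of $A$-levels into $B$-levels inside each tower, and use the density slack encoded in the $c$-factor to absorb what remains of $A$ into the unused portion of $B$.

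For the setup, fix $\eta>0$ small enough that $c(\overline{D}(A)+\eta)<\underline{D}(B)-\eta$ and combine property ($\ast$) with Lemma~\ref{L-density Folner} to select a finite $F\subseteq G$ that satisfies $|F^{-1}F|\leq c|F|$, has both $F$ and $F^{-1}F$ being $(K,\delta)$-invariant for appropriately chosen $K$ and $\delta$, and uniformly witnesses the densities in the sense that
\[
\sup_{x\in X}\frac{|F\cap Ax^{-1}|}{|F|}\leq \overline{D}(A)+\eta\quad\text{and}\quad \inf_{x\in X}\frac{|F\cap Bx^{-1}|}{|F|}\geq \underline{D}(B)-\eta.
\]
The uniform witnessing is obtained by running the $F$ from property ($\ast$) through increasing invariance, which produces a F{\o}lner sequence along which Lemma~\ref{L-density Folner} applies.

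Next I apply Lemma~\ref{L-clopen castle}, iterated in the fashion of the proof of Theorem~\ref{T-zero dim}, to obtain a clopen castle $\{(V_i,S_i)\}_{i\in I}$ with shapes $S_i\subseteq F$ of size at least $(1-\eta)|F|$ and remainder $R=X\setminus\bigsqcup_i S_iV_i$ of upper density less than $\eta$. Using the clopenness of $A$ and $B$ together with the zero-dimensionality of $X$, I refine each base $V_i$ into finitely many clopen pieces so that within every refined tower the incidence subsets $S_{i,A}=\{s\in S_i : sV_i\subseteq A\}$ and $S_{i,B}=\{s\in S_i : sV_i\subseteq B\}$ are well-defined. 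The uniform density bounds yield $|S_{i,A}|\leq(\overline{D}(A)+\eta)|F|$ and $|S_{i,B}|\geq(\underline{D}(B)-2\eta)|F|$, so $|S_{i,A}|<|S_{i,B}|$ and I may fix an injection $\phi_i\colon S_{i,A}\to S_{i,B}$. The clopen collection $\{sV_i:s\in S_{i,A},\,i\in I\}$ covers $A\setminus R$, and the group elements $\phi_i(s)s^{-1}\in FF^{-1}$ translate these pieces into $B$ with pairwise disjoint images.

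The remaining task, which I expect to be the main technical obstacle, is to absorb $A\cap R$ into the unused portion $B_0=B\setminus\bigsqcup_i\phi_i(S_{i,A})V_i$ of $B$. Integrating against any invariant $\mu$ and using Proposition~\ref{P-D sup} together with $\bigsqcup_i S_{i,A}V_i\subseteq A$ gives $\mu(B_0)\geq \mu(B)-\mu(A)\geq\underline{D}(B)-\overline{D}(A)$, hence $\underline{D}(B_0)\geq \underline{D}(B)-\overline{D}(A)>(c-1)\overline{D}(A)$ — this is precisely the density gap that the $c$-factor hypothesis is designed to create, and it strictly dominates $\overline{D}(A\cap R)<\eta$ for small $\eta$. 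To convert this density gap into an honest clopen subequivalence $A\cap R\prec B_0$ I would run a second tower-matching argument on a finer castle whose shape is drawn from $F^{-1}F$, so that the property-($\ast$) bound $|F^{-1}F|\leq c|F|$ simultaneously controls the number of $B_0$-levels available per tower and the reach of the translations used. Concatenating the two matchings then produces the required clopen witness to $A\prec B$.
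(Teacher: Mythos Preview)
Your approach via castle constructions is fundamentally different from the paper's, and the final step has a genuine gap. The paper's argument is direct and avoids castles entirely: after fixing $\eps>0$ with $c(\overline{D}(A)+\eps)<\underline{D}(B)-\eps$ and choosing $F$ via property~($\ast$) so that both $F$ and $F^{-1}F$ witness the relevant density bounds, one enumerates $F=\{s_1,\dots,s_n\}$ and recursively sets
\[
A_1=A\cap s_1^{-1}B,\qquad A_k=\bigl(A\setminus(A_1\cup\cdots\cup A_{k-1})\bigr)\cap s_k^{-1}\bigl(B\setminus(s_1A_1\cup\cdots\cup s_{k-1}A_{k-1})\bigr).
\]
The sets $s_kA_k$ are pairwise disjoint in $B$ by construction. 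The entire content is then a counting argument showing the $A_k$ exhaust $A$: if some $x\in A$ were left over, then for each index $i$ with $s_ix\in B$ the failure $x\notin A_i$ forces a distinct point of $A$ to lie in $F^{-1}Fx$, giving $|Fx\cap B|\leq |A\cap F^{-1}Fx|\leq (\overline{D}(A)+\eps)|F^{-1}F|\leq c(\overline{D}(A)+\eps)|F|<(\underline{D}(B)-\eps)|F|\leq |Fx\cap B|$, a contradiction. This is where the $(K,\delta)$-invariance of $F^{-1}F$ and the bound $|F^{-1}F|\leq c|F|$ are actually used, and it closes the argument in one stroke.

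Your castle-and-matching scheme handles $A\setminus R$ correctly, but the plan for $A\cap R$ is circular as written. You reduce to showing $A\cap R\prec B_0$, observe the density inequality $c\,\overline{D}(A\cap R)<\underline{D}(B_0)$ can be arranged, and then propose ``a second tower-matching argument on a finer castle whose shape is drawn from $F^{-1}F$.'' But a second castle produces a second remainder $R'$, and you are back to needing $A\cap R\cap R'\prec B_0'$; nothing in the outline forces termination, and invoking the statement of the lemma itself to finish is not permitted. The vague appeal to the bound $|F^{-1}F|\leq c|F|$ controlling ``the number of $B_0$-levels available per tower'' does not translate into an argument: $B_0$ need not interact with an $F^{-1}F$-castle any more cleanly than $B$ did with the first castle. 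The paper's greedy construction sidesteps the remainder issue altogether, because it never leaves any part of $A$ unassigned.
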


\begin{proof}
Let $G\curvearrowright X$ be a free action on a zero-dimensional compact metrizable space and let
$A$ and $B$ be clopen subsets of $X$ satisfying $c\overline{D} (A) < \underline{D} (B)$.
Choose an $\eps > 0$ such that $c(\overline{D} (A) + \eps ) < \underline{D} (B) - \eps $.

Take a finite set $K\subseteq G$ and $\delta > 0$ such that for every $(K,\delta )$-invariant
finite set $E\subseteq G$ one has
$\overline{D} (A) + \eps \geq \sup_{x\in X} |A\cap Ex|/|E|$ and
$\underline{D} (B) - \eps \leq \inf_{x\in X} |B\cap Ex|/|E|$. By our choice of $c$,
there is a nonempty finite set $F\subseteq G$ such that $|F^{-1} F| \leq c|F|$ and  
both $F$ and $F^{-1} F$ are $(K,\delta )$-invariant.
Fix an enumeration $s_1 , \dots , s_n$ of the elements of $F$.
Set $A_1 = A\cap s_1^{-1} B$ and for $k=2,\dots , n$ recursively define 
\begin{gather*}
A_k = (A\setminus (A_1 \cup\cdots\cup A_{k-1} )) 
\cap s_k^{-1} (B\setminus (s_1 A_1 \cup\cdots\cup s_{k-1} A_{k-1} )) . 
\end{gather*}
Note that the sets $A_1 , \dots , A_k$ are clopen, pairwise disjoint, and contained in $A$. 

We claim that the union of the sets $A_1 , \dots , A_k$ is equal to $A$.
Suppose to the contrary that there exists an $x\in A\setminus\bigsqcup_{k=1}^n A_k$.
Set $I = \{ 1\leq i\leq n : s_i x \in B \}$. Then 
\begin{align}\label{E-lower I}
|I| = |Fx \cap B| \geq (\underline{D} (B) - \eps )|F|
\end{align}
Since $x$ does not belong to any of the sets $A_1 , \dots , A_n$, for every $i\in I$
there is a $k_i \in \{ 1,\dots ,i-1 \}$ and an $x_i \in A_{k_i}$ such that $s_i x = s_{k_i} x_i$,
which we can also write as $x_i = s_{k_i}^{-1} s_i x$.
Since the points $s_1 x, \dots , s_n x$ are distinct by freeness 
and the sets $A_1 , \dots , A_n$ are pairwise disjoint, the points
$x_i$ for $i\in I$ are distinct, which implies that the group elements $s_{k_i}^{-1} s_i$
for $i\in I$ are distinct. Thus
\begin{align*}
|I| 
\leq |A\cap F^{-1} Fx| 
&\leq (\overline{D} (A) + \eps ) |F^{-1} F| \\
&\leq (\overline{D} (A) + \eps ) c|F| \\ 
&< (\underline{D} (B) - \eps )|F| ,
\end{align*}
contradicting (\ref{E-lower I}). We must therefore have $A = \bigsqcup_{k=1}^n A_k$.
Since the sets $s_1 A_1 , \dots , s_n A_n$ are pairwise disjoint and contained in $B$,
we have thus shown that $A\prec B$.
\end{proof}

\begin{lemma}\label{L-zero af}
Suppose that $G$ is amenable.
Let $G\curvearrowright X$ be a free action 
on a zero-dimensional compact metrizable space.
Suppose that there exists a $c > 0$ such that for all clopen sets 
$A,B\subseteq X$ satisfying $c\overline{D} (A) \leq\underline{D} (B)$ one has $A\prec B$. 
Then the action is almost finite.
\end{lemma}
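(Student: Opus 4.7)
The plan is to apply Theorem~\ref{T-zero dim} to produce a clopen castle with approximately invariant shapes and small upper-density remainder, to mark off a sub-castle inside it that is small on each tower but still has uniformly positive lower density, and then to subequate the remainder into the sub-castle via the hypothesis, thereby verifying condition~(ii) in the definition of almost finiteness. Condition~(i) will be built into the castle from the outset.

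More precisely, given $n\in\Nb$, a finite $K\subseteq G$, and $\delta>0$, I would first enlarge $K$ to contain the identity and have cardinality at least $8n$, and fix $\eta\leq 1/(4nc+1)$ and $\delta'\in(0,\min\{\delta,1\}]$. Since $X$ is zero-dimensional, the proof of Theorem~\ref{T-zero dim} (via Lemma~\ref{L-clopen castle}) in fact delivers clopen castles, so I obtain a clopen castle $\{(V_i,S_i)\}_{i\in I}$ with $(K,\delta')$-invariant (hence $(K,\delta)$-invariant) shapes, levels of diameter less than $\delta$, and remainder $R=X\setminus\bigsqcup_i S_iV_i$ satisfying $\overline{D}(R)\leq\eta$. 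The enlargement of $K$ combined with $e\in K$ yields $|K|\leq|KS_i|<(1+\delta')|S_i|$, so $|S_i|>|K|/2\geq 4n$ for every $i$.

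For each $i$ I then take $S_i'\subseteq S_i$ of cardinality $\lfloor|S_i|/(2n)\rfloor$, so that $|S_i'|<|S_i|/n$ while also $|S_i'|\geq|S_i|/(4n)$ by virtue of $|S_i|\geq 4n$. Since every $G$-invariant probability measure equidistributes across the levels of a tower, for any $\mu\in M_G(X)$ we have
\[
\mu\Big(\bigsqcup_i S_i'V_i\Big) = \sum_i|S_i'|\mu(V_i) \geq \frac{1}{4n}\sum_i|S_i|\mu(V_i) = \frac{1-\mu(R)}{4n}.
\]
Proposition~\ref{P-D sup}, applied to the clopen sets above, then gives
\[
\underline{D}\Big(\bigsqcup_i S_i'V_i\Big) \geq \frac{1-\overline{D}(R)}{4n} \geq \frac{1-\eta}{4n} \geq c\eta \geq c\,\overline{D}(R),
\]
so the hypothesis of the lemma, applied to the clopen sets $R$ and $\bigsqcup_i S_i'V_i$, yields $R\prec\bigsqcup_i S_i'V_i$. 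This completes the verification of almost finiteness.

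The delicate point is balancing the two competing constraints on $S_i'$: it must be small enough that $|S_i'|<|S_i|/n$ (for almost finiteness) yet large enough that $|S_i'|/|S_i|$ is bounded below by a universal constant (so that the density inequality activates the hypothesis). This is achieved by first forcing the shapes to be large through strong F\o{}lner invariance, which is the sole purpose of the initial enlargement of $K$; beyond this I do not anticipate any further obstacle.
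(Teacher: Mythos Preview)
Your proof is correct and follows essentially the same approach the paper indicates: it spells out the adaptation of the (iii)$\Rightarrow$(i) argument of Theorem~\ref{T-af comparison}, substituting the density-comparison hypothesis for $m$-comparison and using that the proof of Theorem~\ref{T-zero dim} actually produces clopen castles. One trivial omission is the case of finite $G$ (your enlargement $|K|\geq 8n$ may be impossible), but this is handled separately at the start of Theorem~\ref{T-af comparison}: a free action of a finite group on a zero-dimensional space decomposes equivariantly as $Y\times G$ and is automatically almost finite.
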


\begin{proof}
It is clear that the hypothesis in the theorem statement can be substituted for
$m$-comparison in the proof of (iii)$\Rightarrow$(i) in Theorem~\ref{T-af comparison}
given that the castle there can be taken to be clopen, as the proof of Theorem~\ref{T-zero dim}
demonstrates.
\end{proof}

\begin{remark}
Finally we point out that, for finitely generated groups, property ($\ast$)
is actually equivalent to polynomial growth. Indeed for finitely generated groups
polynomial growth is equivalent to virtual nilpotence by Gromov's theorem \cite{Gro81},
and property ($\ast$) (or even a weaker version of it, 
known as the Tempelman condition, which does not require $F^{-1} F$ to be $(K,\delta )$-invariant)
implies virtual nilpotence
by Corollary~11.2 in \cite{BreGreTao12}. This corollary shows that
if $G$ is a finitely generated group for which there is a $b>0$ such that 
for every finite set $E\subseteq G$ there exists a finite set $F\subseteq G$ 
satisfying $|F^{-1} F| \leq b|F|$ and $E\subseteq F$, then $G$ is virtually nilpotent 
(the corollary is stated in \cite{BreGreTao12} with $F^2$
in place of $F^{-1} F$, but the authors' application of their Corollary~1.7 in the proof
works equally well in the latter case). While the containment condition $E\subseteq F$
for a given $E$ does not appear in the definition of property ($\ast$),
we can arrange for the set $F$ in the definition
to satisfy this condition, 
for if $F$ is $(E,\delta )$-invariant for a sufficiently small $\delta > 0$ 
then it contains $Es$ for some $s\in G$, as is easy to check, 
in which case $Fs^{-1}$ contains $E$ and
satisfies $|(Fs^{-1})^{-1} Fs^{-1}| = |F^{-1} F|$ and $|Fs^{-1} | = |F|$.
\end{remark}

\section{Crossed products and the Toms--Winter conjecture}\label{S-TW}

The Toms--Winter conjecture asserts the equivalence of the following
three conditions for simple separable infinite-dimensional unital nuclear C$^*$-algebras:
\begin{enumerate}
\item finite nuclear dimension,

\item $\cZ$-stability,

\item strict comparison.
\end{enumerate}
Recently in \cite{Casetal18} the conjecture was settled in the case that the C$^*$-algebra 
has uniform property $\Gamma$, as defined below. Since $\cZ$-stability implies uniform property $\Gamma$
this yields the implication (ii)$\Rightarrow$(i) in full generality.
As finite nuclear dimension was known to imply $\cZ$-stability \cite{Win12}
one thereby obtains the equivalence of (i) and (ii).

\begin{definition}
A unital C$^*$-algebra $A$ with nonempty tracial state space
is said to have {\it uniform property $\Gamma$} if for every
finite set $\Omega\subseteq A$ and $\eps > 0$ there are two orthogonal
positive contractions $e_1, e_2 \in A$ such that
for all $a\in\Omega$ and $k=1,2$ one has
\begin{enumerate}
\item $\| e_k a - ae_k \| < \eps$, and

\item $|\tau (e_k a) - \frac{1}{2} \tau (a) | < \eps$ for every tracial state $\tau$ on $A$. 
\end{enumerate}
\end{definition}

In condition (i) above one can equivalently use the uniform trace norm $\| \cdot \|_{2,{\rm u}}$
(the supremum of the trace norms over all tracial states), as follows for example from Propositions~4.5 and 4.6 in \cite{KirRor14}.

Making use of tiling techniques as in the proofs of Theorem~5.3 in \cite{ConJacKerMarSewTuc18}
and Theorem~12.4 in \cite{Ker17} we will verify uniform property $\Gamma$ for crossed products
of free actions with the small boundary property, i.e., which are almost finite in measure.
The basic tool is the following disjointified version of the Ornstein--Weiss quasitiling theorem.
We say that a set $E\subseteq G$
is {\it tiled} by a collection $\cT$ of subsets of $G$
if $E$ can be partitioned into sets of the form $Tc$ where $T\in\cT$ and $c\in G$.

\begin{lemma}\label{L-tiling}
Suppose that $G$ is amenable. 
Let $K$ be a finite subset of $G$ and $\delta > 0$.
Let $0 < \eps < \frac12$. Then there are a finite set $K' \subseteq G$,
a $\delta' > 0$, and a finite collection $\cT$ of $(K,\delta )$-invariant finite subsets of $G$
such that for every $(K' , \delta' )$-invariant finite set $E\subseteq G$ 
there is a set $E' \subseteq E$ such that $|E'| \geq (1-\eps )|E|$ and $E'$
can be tiled by $\cT$.
\end{lemma}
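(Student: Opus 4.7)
The plan is to apply the classical Ornstein--Weiss quasitiling theorem and then disjointify the resulting $\eta$-disjoint covering by passing to its disjoint ``cores,'' while ensuring that all such cores already belong to a prescribed finite collection $\cT$ of $(K,\delta)$-invariant shapes. First, by Lemma~\ref{L-almost invt} I fix $\eta > 0$ satisfying (a) $(1-\eta)^2 \geq 1-\eps$, and (b) whenever $F \subseteq G$ is a $(K,\eta)$-invariant finite set and $F' \subseteq F$ has $|F'| \geq (1-\eta)|F|$, the set $F'$ is $(K,\delta)$-invariant. Applying the Ornstein--Weiss quasitiling theorem (cf.~\cite{OrnWei87}) with parameters $(K, \eta, \eta)$ yields finite sets $T_1, \ldots, T_N \subseteq G$, each $(K,\eta)$-invariant, together with a finite set $K' \subseteq G$ and $\delta' > 0$ such that for every $(K',\delta')$-invariant finite $E \subseteq G$ there exist $C_1, \ldots, C_N \subseteq G$ with $T_i C_i \subseteq E$ for all $i$, the family $\{T_i c : 1 \leq i \leq N,\ c \in C_i\}$ is $\eta$-disjoint, and $\big|\bigcup_{i,c} T_i c\big| \geq (1-\eta)|E|$. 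I then set
\[
\cT = \bigcup_{i=1}^N \{\, T' \subseteq T_i : |T'| \geq (1-\eta)|T_i| \,\} ,
\]
which is finite and, by (b), consists of $(K,\delta)$-invariant sets.

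Given $E$ together with its $\eta$-disjoint quasitiling, for each pair $(i,c)$ with $c\in C_i$ the definition of $\eta$-disjointness supplies an $A_{i,c} \subseteq T_i$ with $|A_{i,c}| \geq (1-\eta)|T_i|$ such that the translates $\{A_{i,c} c\}_{i,c}$ are pairwise disjoint. Since each $A_{i,c}$ lies in $\cT$, the set $E' := \bigsqcup_{i,c} A_{i,c} c$ is contained in $E$ and tiled by $\cT$, and a direct count gives
\[
|E'| = \sum_{i,c} |A_{i,c}| \geq (1-\eta) \sum_{i,c} |T_i| \geq (1-\eta) \Big|\bigcup_{i,c} T_i c\Big| \geq (1-\eta)^2 |E| \geq (1-\eps)|E| ,
\]
as required.

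The only genuine subtlety is the construction of $\cT$: the disjoint cores $A_{i,c}$ furnished by $\eta$-disjointness depend on the center $c$, so different translates of the same shape $T_i$ may need to be trimmed along different subsets, forcing $\cT$ to contain \emph{all} sufficiently large subsets of each $T_i$ rather than a single canonical truncation. Since each $T_i$ has only finitely many subsets, the resulting collection is still finite, and the argument reduces to chaining the standard Ornstein--Weiss estimate with the invariance stability encoded in Lemma~\ref{L-almost invt}.
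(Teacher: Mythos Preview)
Your proof is correct and follows essentially the same approach as the paper: invoke the Ornstein--Weiss quasitiling theorem to obtain $(K,\eta)$-invariant shapes $T_1,\dots,T_N$, then let $\cT$ consist of all subsets of each $T_i$ of proportion at least $1-\eta$, using Lemma~\ref{L-almost invt} to guarantee these remain $(K,\delta)$-invariant. The only cosmetic difference is that the paper cites a version of the quasitiling theorem (Theorem~4.36 of \cite{KerLi16}) which already delivers pairwise disjoint cores $T_{i,c}c$ whose union equals $\bigcup_i T_i C_i$, so the $(1-\eps)$ bound comes directly; you instead quote the classical $\eta$-disjoint formulation and perform the disjointification by hand, compensating for the extra $(1-\eta)$ loss via the condition $(1-\eta)^2\geq 1-\eps$.
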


\begin{proof}
As is simple to check, there exists an $\eta > 0$ such that for every
$(K,\eta )$-invariant finite set $T\subseteq G$ and every $T' \subseteq T$
with $|T' | \geq (1-\eta )|T|$ the set $T'$ is $(K,\delta )$-invariant.
By shrinking $\eps$ if necessary we may assume that $\eps\leq\eta$.
By the Ornstein--Weiss quasitiling theorem (see Theorem~4.36 of \cite{KerLi16}),
there are a finite set $K' \subseteq G$, a $\delta' > 0$,
and $(K,\eta )$-invariant finite sets $T_1 , \dots , T_m \subseteq G$
such that for every $(K' ,\delta' )$-invariant finite set $E\subseteq G$ there exist
$C_1 , \dots , C_m \subseteq G$ and pairwise disjoint sets $T_{i,c} \subseteq T_i$ 
with $|T_{i,c} | \geq (1-\eta )|T_i |$ for $i=1,\dots ,m$ and $c\in C_i$ such that 
\begin{enumerate}
\item $\bigcup_{i=1}^m T_i C_i$ is a subset of $E$ with cardinality at least $(1-\eps )|E|$, and 

\item $\bigsqcup_{i=1}^m \bigsqcup_{c\in C_i} T_{i,c} c = \bigcup_{i=1}^m T_i C_i$.
\end{enumerate}
By our choice of $\eta$, each of the sets $T_{i,c}$ is $(K,\delta )$-invariant.
We can thus take $\cT$ to be the union of all sets $T\subseteq G$ for which 
there is an $1\leq i\leq n$ such that $T\subseteq T_i$ and $|T| \geq (1-\eta )|T_i |$.
\end{proof}

\begin{lemma}\label{L-levels}
Let $X$ be a compact metrizable space
and $G\curvearrowright X$ a free action which is almost finite in measure.
Let $\cP$ be a finite regular closed partition of $X$ whose boundary $\partial\cP$ 
(Definition~\ref{D-regular}) has
zero upper density. Let $K$ be a finite subset of $G$ and $\delta > 0$. 
Then there is an open castle $\{ (V_i , S_i ) \}_{i\in I}$ such that
\begin{enumerate}
\item each shape $S_i$ is $(K,\delta )$-invariant,

\item each level of the castle is contained in the interior of 
some member of $\cP$ and has boundary of upper density zero,
 
\item $\underline{D} (\bigsqcup_{i\in I} S_i V_i ) \geq 1-\eps$.
\end{enumerate} 
\end{lemma}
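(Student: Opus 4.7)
The plan is to combine Lemma~\ref{L-tiling} with a zero-dimensional extension and a final refinement by $\cP$. First I would apply Lemma~\ref{L-tiling} with parameters $K$, $\delta$, and some small $\eps_1 > 0$ (to be fixed later in terms of $\eps$) to obtain a finite collection $\cT$ of $(K,\delta)$-invariant shapes together with refinement parameters $K_0 \supseteq K$ and $\delta_0 \leq \delta$ so that every $(K_0,\delta_0)$-invariant finite subset of $G$ can be almost entirely tiled by translates of elements of $\cT$.

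Next, since the action is almost finite in measure it has the small boundary property (Theorem~\ref{T-af in measure}), so by Theorem~\ref{thm:SBP} I can pass to a zero-dimensional extension $\pi : (Z,\gamma) \to (X,\alpha)$ which is measure-isomorphic over singleton fibres. The action $\gamma$ remains free, and thus Theorem~\ref{T-zero dim} produces a clopen castle $\{(U_j,S_j)\}$ in $Z$ with $(K_0,\delta_0)$-invariant shapes whose union of levels has lower density at least $1-\eps_2$ for a chosen small $\eps_2 > 0$. Setting $V_j^0 := \pi(U_j) \setminus \pi(Z \setminus U_j)$, as in the proof of Theorem~\ref{T-af in measure}, yields an open castle in $X$ with the same shapes and the same density bound; Propositions~\ref{P-bijective dyn} and \ref{P-D sup} will moreover force each base $V_j^0$, and each of its translates, to have boundary of upper density zero, since $\overline{V_j^0} \setminus V_j^0 \subseteq \pi(U_j) \cap \pi(Z \setminus U_j)$.

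With this castle in hand I will refine in two further stages. First, within each shape $S_j$, use Lemma~\ref{L-tiling} to tile a $(1-\eps_1)$-proportion of $S_j$ by right-translates of elements of $\cT$; this decomposes each original tower $(V_j^0,S_j)$ into sub-towers $(c_{j,\ell} V_j^0 , T_{j,\ell})$ whose shapes are now $(K,\delta)$-invariant. Second, for each such sub-tower with base $V = c_{j,\ell} V_j^0$ and shape $T = T_{j,\ell}$, I will split the base according to the location of its levels relative to $\cP = \{P_1,\dots,P_m\}$: for each function $\sigma : T \to \{1,\dots,m\}$ set
\[
V_\sigma := V \cap \bigcap_{s\in T} s^{-1} \interior(P_{\sigma(s)}),
\]
which is open, pairwise disjoint in $\sigma$, and automatically satisfies $s V_\sigma \subseteq \interior(P_{\sigma(s)})$ for every $s \in T$. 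The final castle will be indexed by those triples $(j,\ell,\sigma)$ for which $V_\sigma$ is nonempty.

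The main obstacle is the boundary condition in (ii). For this I expect to use the inclusion $\partial V_\sigma \subseteq \partial V \cup \bigcup_{s \in T} s^{-1} \partial(\interior P_{\sigma(s)})$: the first piece has upper density zero by the extension-based construction, while the second piece is controlled by the regular closedness of $\cP$, since $P_k = \overline{\interior P_k}$ implies $\partial(\interior P_k) \subseteq \partial P_k \subseteq \partial \cP$, and $\overline{D}(\partial\cP) = 0$ by hypothesis together with $G$-invariance of upper density. Condition (iii) will then follow by tallying losses: the refined castle misses only the sub-tiling remainder (contributing at most $\eps_1$ to the upper density) and the set $V \cap \bigcup_{s \in T} s^{-1} \partial\cP$ (contributing zero), so a choice of $\eps_1,\eps_2 < \eps/2$ at the outset yields a lower density of at least $1-\eps$, completing the construction.
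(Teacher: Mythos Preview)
Your argument is correct, but it takes an unnecessarily long route compared with the paper. The paper simply applies almost finiteness in measure directly with the target parameters $K,\delta$ to obtain an open castle with $(K,\delta)$-invariant shapes and remainder of upper density at most $\eps$; it then cites the proof of Theorem~\ref{T-af in measure} (which is exactly your zero-dimensional extension step) to arrange that every level already has boundary of upper density zero, and finally carries out the same $\cP$-refinement $V_{i,\sigma}=\{x\in V_i:\sigma_x=\sigma\}$ that you describe. Your invocation of Lemma~\ref{L-tiling} is entirely superfluous here: there is no need to first build towers with $(K_0,\delta_0)$-invariant shapes and then sub-tile them into $(K,\delta)$-invariant pieces, since the definition of almost finiteness in measure hands you $(K,\delta)$-invariant shapes from the outset. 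Dropping that detour collapses your argument to the paper's, with the only remaining cosmetic difference being that you spell out the extension explicitly while the paper points to the proof of Theorem~\ref{T-af in measure}. Your boundary and density estimates are handled the same way as in the paper (via $\partial V_\sigma\subseteq\partial V\cup\bigcup_{s}s^{-1}\partial\cP$ and the fact that the discarded part lies in translates of $\partial\cP$), so nothing is lost by streamlining.
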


\begin{proof}
By almost finiteness in measure there is an open castle $\{ (V_i , S_i ) \}_{i\in I}$ such that
each shape $S_i$ is $(K,\delta )$-invariant 
and $\underline{D} (\bigsqcup_{i\in I} S_i V_i ) \geq 1-\eps$.
The proof of Theorem~\ref{T-af in measure} shows that we may assume that the boundary
of each level of the castle has upper density zero, which is equivalent to saying that
$\overline{D} (\partial\bigsqcup_{i\in I} S_i V_i ) = 0$.
Write the interiors of the members of $\cP$ as $A_1 , \dots , A_n$.
Set $A_0 = X\setminus (A_1 \sqcup\cdots\sqcup A_n )$. Then the
sets $A_0 , \dots , A_n$ partition $X$.
Let $i\in I$. For each $x\in V_i$ consider the function 
$\sigma_x \in \{ 0,\dots , n \}^{S_i}$ such that $sx \in A_{\sigma_x (s)}$ for every $s\in S_i$.
Write $\Sigma_i = \{ 1,\dots , n \}^{S_i}$. For every $\sigma\in\Sigma_i$ 
define the set $V_{i,\sigma} = \{ x\in V_i : \sigma_x = \sigma \}$, which is open
because the sets $A_1 , \dots , A_n$ are open. The pairs $(V_{i,\sigma} , S_i )$
for $\sigma\in\Sigma_i$ thus form an open castle each of whose levels belongs
to the interior of some member of $\cP$, and the complement
$S_i V_i \setminus \bigsqcup_{\sigma\in\Sigma_i} S_i V_{i,\sigma}$
has upper density zero given that it is contained in $S_i S_i^{-1} \partial\cP$. 
Having done this for each $i\in I$, we obtain
an open castle $\{ (V_{i,\sigma} , S_i ) : i\in I, \, \sigma\in\Sigma_i \}$
such that the complement of the union of its levels is contained in the union of
$X\setminus \bigsqcup_{i\in I} S_i V_i$ and $\bigcup_{i\in I} S_i S_i^{-1} \partial\cP$,
and since the latter has upper density zero we deduce that
$\underline{D} (\bigsqcup_{i\in I} \bigsqcup_{\sigma\in\Sigma_i} S_i V_{i,\sigma} ) 
= \underline{D} (\bigsqcup_{i\in I} S_i V_i ) \geq 1-\eps$.
Finally, since the boundary of the set 
$\bigsqcup_{i\in I} \bigsqcup_{\sigma\in\Sigma_i} S_i V_{i,\sigma}$ is contained in
the union of $\partial\bigsqcup_{i\in I} S_i V_i$ and $\bigcup_{i\in I} S_i S_i^{-1} \partial\cP$,
it must have upper density zero, so that the boundary of each level of the castle we have 
constructed has upper density zero. Our requirements are thus fulfilled.
\end{proof}

In the proof of the following theorem we will abuse notation and use the same
symbol $\mu$ to denote a measure and the integral of a function with respect to that measure,
as well as the induced tracial state on the crossed product.

\begin{theorem}\label{T-Gamma}
Suppose that $G$ is infinite and amenable.
Let $X$ be a compact metrizable space and $G\curvearrowright X$ a free action 
with the small boundary property.
Then $C(X)\rtimes G$ has uniform property $\Gamma$.
\end{theorem}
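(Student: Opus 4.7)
The plan is to construct, for a given finite $\Omega\subseteq C(X)\rtimes G$ and $\varepsilon>0$, explicit orthogonal positive contractions $e_1,e_2\in C(X)$ witnessing uniform property $\Gamma$. I will exploit the equivalent formulation of this property using the uniform trace $2$-norm $\|\cdot\|_{2,{\rm u}}$ noted just after the definition, which turns the commutator requirement into an $L^2$-estimate amenable to F\o lner averaging. By a standard density argument, it suffices to treat $\Omega=\Omega_0\cup\{u_s:s\in K\}$ for some finite $\Omega_0\subseteq C(X)$ and finite $K\subseteq G$. With $e_k\in C(X)$, commutators with functions in $C(X)$ vanish. Every tracial state on $C(X)\rtimes G$ is of the form $\tau_\mu(a)=\int E(a)\,d\mu$ for some $\mu\in M_G(X)$ (where $E$ is the canonical conditional expectation), so the trace condition reduces to $\bigl|\int e_k f\,d\mu-\tfrac{1}{2}\int f\,d\mu\bigr|<\varepsilon$ uniformly in $\mu\in M_G(X)$ and $f\in\Omega_0\cup\{1\}$.

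Using the small boundary property, I pick a regular closed partition $\cP$ of $X$ with $\overline{D}(\partial\cP)=0$ on which each $f\in\Omega_0$ has oscillation less than $\varepsilon$. Lemma~\ref{L-tiling} yields a finite collection $\cF$ of $(K,\varepsilon)$-invariant shapes together with parameters $(K',\delta')$ so that any $(K',\delta')$-invariant finite set can be almost entirely tiled by translates of elements of $\cF$. Since the action is almost finite in measure by Theorem~\ref{T-af in measure}, Lemma~\ref{L-levels} produces an open castle $\{(V_i,S_i)\}_{i\in I}$ with each $S_i$ being $(K',\delta')$-invariant, with each level $tV_i$ lying in the interior of a unique $C(i,t)\in\cP$, and with remainder $R=X\setminus\bigsqcup_{i,t}tV_i$ of upper density at most $\varepsilon$. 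For each $i$ I tile $S_i$ almost entirely by translates $F_\alpha c_\alpha$ of elements of $\cF$, then partition the tile index set into two groups $I_i^{(1)}\sqcup I_i^{(2)}$ such that $S_i^{(k)}:=\bigsqcup_{\alpha\in I_i^{(k)}}F_\alpha c_\alpha$ satisfies for $k=1,2$:
\begin{enumerate}
\item[(a)] $\bigl||S_i^{(k)}|-\tfrac{1}{2}|S_i|\bigr|\leq \varepsilon|S_i|$; and
\item[(b)] for each $C\in\cP$, $\bigl||S_i^{(k)}\cap \sigma_i^{-1}(C)|-\tfrac{1}{2}|\sigma_i^{-1}(C)|\bigr|\leq \varepsilon|S_i|$, where $\sigma_i(t):=C(i,t)$.
\end{enumerate}
This simultaneous balancing is carried out by a standard combinatorial matching argument on the tile index set, possible because each tile is of bounded size while the number of tiles in $S_i$ is large. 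Crucially, since each tile shape is itself $(K,\varepsilon)$-invariant, the two-layer structure forces $|sS_i^{(k)}\,\Delta\, S_i^{(k)}|\leq \varepsilon|S_i|$ for every $s\in K$.

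Set $B_k=\bigsqcup_{i\in I} S_i^{(k)}V_i$; these are disjoint open subsets of $X$. Since each level has boundary of zero upper density, Urysohn's lemma produces continuous positive contractions $e_1,e_2\in C(X)$ with disjoint supports inside small neighborhoods of $\overline{B_1},\overline{B_2}$ and agreeing with $1_{B_k}$ outside a set of arbitrarily small upper density. Orthogonality $e_1e_2=0$ is immediate. The trace condition reduces, using the fineness of $\cP$, the per-type balance (b), Proposition~\ref{P-D sup}, and the smallness of $\mu(R)$ and $\mu(\partial\cP)=0$, to controlling $\mu(B_k\cap C)-\tfrac{1}{2}\mu(C)$ uniformly for $C\in\cP$, which is handled directly by (b). The commutator estimate in uniform trace $2$-norm expands via $G$-invariance of $\mu$ to
\[
\|\alpha_s(e_k)-e_k\|_{2,\mu}^2 \leq \mu(sB_k\,\Delta\, B_k) + o(1) \leq \sum_i |sS_i^{(k)}\,\Delta\, S_i^{(k)}|\,\mu(V_i)+2\mu(R) \leq 3\varepsilon,
\]
uniformly in $\mu\in M_G(X)$, where the tile-level F\o lner estimate from the previous paragraph is applied termwise. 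The principal obstacle is the construction of the balanced halvings satisfying (a) and (b) while preserving approximate $K$-invariance; this is the crux of the tile-alternating argument, resolved by exploiting the hierarchy of scales (tiles much smaller than $S_i$ but still $(K,\varepsilon)$-invariant) to have enough flexibility in the pairing to balance the $\cP$-type profiles without disturbing the F\o lner property that the halves inherit from the individual tiles.
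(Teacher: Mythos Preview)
Your proposal is correct and follows the same skeleton as the paper: reduce via the small boundary property to a regular closed partition $\cP$ with null boundary, invoke Lemma~\ref{L-tiling} and Lemma~\ref{L-levels} to obtain a castle whose shapes are almost tiled by $(K,\varepsilon)$-invariant tiles, then split the tiles into two balanced halves. The differences are in execution.

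For the balancing step (your conditions (a) and (b)), the paper does not appeal to a generic matching argument. Instead it groups the tiles $T_jc$ by their full $\cP$-colour pattern $\sigma\in\cP^{T_j}$ (recording for each $t\in T_j$ which member of $\cP$ contains $tcV_i$) and simply halves each colour class $C_{i,j,\sigma}$. This makes (b) automatic level by level, and the error from odd-cardinality classes is controlled by arranging $|S_i|\geq 6JM|\cP|^M/\varepsilon$. Your ``standard combinatorial matching'' is presumably this, but you leave the mechanism implicit; the colour-pattern grouping is the clean way to realize it.

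For the commutator estimate, you exploit the uniform $2$-norm formulation and the fact that each $S_i^{(k)}$ is a disjoint union of $(K,\varepsilon)$-invariant tiles, giving $|sS_i^{(k)}\,\Delta\,S_i^{(k)}|\lesssim\varepsilon|S_i|$ directly. The paper instead works in \emph{operator norm}: it builds, on each tile, a Lipschitz bump function via nested shells $T_{j,q}=\bigcap_{s\in L^{Q-q}}sT_j\setminus\cdots$ weighted by $q/Q$, so that conjugation by $u_s$ shifts the weights by at most $1/Q$ in sup-norm. Your $L^2$ shortcut is a genuine simplification, avoiding this construction entirely, at the price of invoking the equivalence between the norm and $2$-norm versions of uniform property $\Gamma$ noted after the definition.
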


\begin{proof}
To prove the theorem we will show that for all finite sets 
$\Omega \subseteq C(X)\rtimes G$ and $\eps > 0$
there exist positive contractions $f_1 , f_2\in C(X)$ with disjoint supports such that 
\begin{align}\label{E-half}
|\mu (f_k a) - \textstyle\frac12 \mu (a)| < \eps 
\end{align}
for all $k=1,2$, $a\in\Omega$, and $\mu\in M_G (X)$, and
\begin{align}\label{E-central}
\| f_k a - a f_k \| < \eps 
\end{align}
for all $k=1,2$ and $a\in\Omega$.

First of all, we claim that it is enough to show that for every finite regular closed partition 
$\cP$ of $X$ whose boundary has upper density zero (Definition~\ref{D-regular}), 
every finite set $L\subseteq G$,
and every $\eps > 0$ we can find positive 
contractions $f_1 , f_2\in C(X)$ with disjoint supports such that 
\begin{align}\label{E-half 2}
|\mu (f_k \unit_A) - \textstyle\frac12 \mu (A)| < \eps
\end{align}
for all $k=1,2$, $A\in\cP$, and $\mu\in M_G (X)$, and, denoting by $u_s$
the canonical unitary in $C(X)\rtimes G$ associated to a given group element $s$,
\begin{align}\label{E-central 2}
\| f_k u_s - u_s f_k \| < \eps
\end{align}
for all $k=1,2$ and $s\in L$.

Indeed first note that to verify (\ref{E-half})
it is enough that $\Omega$ be a subset of $C(X)$, for if $E$ denotes the 
canonical conditional expectation from $C(X)\rtimes G$ onto $C(X)$
then given any $f\in C(X)$ and $a\in C(X)\rtimes G$ we have
$\mu (fa) = \mu (E(fa)) = \mu (fE(a))$. 
Note next that by Theorem~\ref{thm:SBP} 
there is an extension $G\curvearrowright Z$ of $G\curvearrowright X$ which is measure-isomorphic over singleton fibres
such that $Z$ is zero-dimensional. Accordingly we can view $C(X)$ as a $G$-invariant 
unital C$^*$-subalgebra of $C(Z)$ and $C(X)\rtimes G$ as a unital C$^*$-subalgebra 
of $C(Z)\rtimes G$. By zero-dimensionality, 
for every finite collection of functions in $C(Z)$,
and in particular for every finite collection of functions in $C(X)$, there is a clopen
partition $\cP$ of $Z$ such that each function in the collection is approximately equal
in norm, to within a prescribed tolerance,
to a linear combination of the indicator functions of the members of $\cP$.
Given that the factor map $Z\to X$ induces a bijection $M_G (Z)\to M_G (X)$ via its push-forward,
a simple approximation argument then shows
that to verify (\ref{E-half}) we may in fact go outside
of $C(X)$ within the larger algebra $C(Z)$ and instead quantify $\Omega$ over the collections 
$\{ \unit_A : A\in\cP \}$ where $\cP$ ranges over the clopen partitions of $Z$.
But this then means that it is enough to verify (\ref{E-half 2}) with $\cP$
ranging, as in the claim, over the finite regular closed partitions
of $X$ whose boundaries have upper density zero, for 
under the factor map $Z\to X$ the image of every clopen partition of $Z$
is a regular closed partition of $X$ whose boundary has upper density zero,
as the proof of Proposition~\ref{thm:SBP}\ref{thm:SBP:2}$\Rightarrow$\ref{thm:SBP:3} shows. 
Finally, given that $C(X)$ is a commutative C$^*$-algebra one can replace
(\ref{E-central}) by (\ref{E-central 2}) via a simple approximation argument,
thereby establishing the claim.

So now let $\cP$ be a regular closed partition of $X$ with 
$\overline{D} (\partial\cP ) = 0$, $L$ a finite symmetric subset of $G$, and $0 < \eps < 1$.
Choose an integer $Q > 1/\eps$. 
By amenability there are a finite set $K\subseteq G$ and a $\delta > 0$ such that
every $(K,\delta )$-invariant finite set $E\subseteq G$
satisfies $|\bigcap_{s\in L^Q} sE | \geq (1-\eps /4) |E|$.
By Lemma~\ref{L-tiling} there are a finite set $K' \subseteq G$,
a $\delta' > 0$, and a finite collection $\cT = \{ T_1 , \dots , T_J \}$ 
of $(K,\delta )$-invariant finite subsets of $G$
such that for every $(K' , \delta' )$-invariant finite set $E\subseteq G$ 
there is a set $E' \subseteq E$ such that $|E'| \geq (1-\eps /6 )|E|$ and $E'$
can be tiled by $\cT$. Set $M = \max_{1\leq j\leq J} |T_j |$.

By Theorem~\ref{T-af in measure}, the action $G\curvearrowright X$ is almost finite in measure,
so by Lemma~\ref{L-levels} there is an open castle $\{ (V_i , S_i ) \}_{i\in I}$ such that
\begin{enumerate}
\item each shape $S_i$ is $(K',\delta' )$-invariant,

\item each level of the castle is contained in the interior of some member of $\cP$ 
and has boundary of upper density zero, and

\item $\underline{D} (\bigsqcup_{i\in I} S_i V_i ) \geq 1-\eps /6$. 
\end{enumerate} 
As $G$ is infinite,
we may demand even stronger approximate invariance in (i) so as to additionally force 
each of the shapes $S_i$ to have cardinality at least $6JM|\cP |^M /\eps$.

Let $i\in I$. By our invocation of Lemma~\ref{L-tiling},
there is a set $S_i' \subseteq S_i$ with $|S_i' | \geq (1-\eps /6)|S_i |$ and
sets $C_{i,1} , \dots , C_{i,J} \subseteq G$ such that the collection
$\{ T_j c : 1\leq j\leq J , \, c\in C_{i,j} \}$ partitions $S_i'$.
Let $1\leq j\leq J$. For each $c\in C_{i,j}$ write $\sigma_c$ for the element of
$\cP^{T_j}$ such that $tcV_i \subseteq \sigma_c (t)$ for all $t\in T_j$.
This element is unique by (ii) and the fact that the interiors of the members of $\cP$
are pairwise disjoint by the definition of regular closed partition.
For each $\sigma\in\cP^{T_j}$ write $C_{i,j,\sigma}$ for the set
of all $c\in C_{i,j}$ such that $\sigma_c = \sigma$, and choose two disjoint subsets
$C_{i,j,\sigma}^{(1)}$ and $C_{i,j,\sigma}^{(2)}$ of $C_{i,j,\sigma}$ with equal cardinality 
such that the complement of their union in $C_{i,j,\sigma}$, which we denote by $C_{i,j,\sigma}^{(0)}$, 
is either empty or a singleton, depending on the parity of the cardinality of $C_{i,j,\sigma}$.

Set $T_{j,Q} = \bigcap_{s\in L^Q} sT_j$, and note that, by our choice of $K$ and $\delta$,
\begin{align}\label{E-core}
|T_{j,Q} | \geq (1-\eps /4) |T_j| .
\end{align}
Next recursively define, for $q=0,\dots ,Q-1$,
\[
T_{j,q} = L^{Q-q} T_{j,Q} \setminus L^{Q-q-1} T_{j,Q} .
\]
The sets $T_{j,q}$ for $q=0,\dots ,Q$ are pairwise disjoint and contained in $T_j$,
and for $s\in L$ we have
\begin{align}\label{E-one}
sT_{j,Q} \subseteq T_{j,Q-1} \cup T_{j,Q}
\end{align}
and, for $q=1,\dots ,Q$,
\begin{align}\label{E-one 2}
sT_{j,q} \subseteq T_{j,q-1} \cup T_{j,q} \cup T_{j,q+1}
\end{align}

Let $i\in I$, $1\leq j\leq J$, and $c\in C_{i,j}$ be given.
Since the boundaries of our towers have upper density zero, by Proposition~\ref{P-portmanteau} 
there is an $\eta> 0$ such that the set $B_{i,j,c} := \{ x\in cV_i : d(x,X\setminus cV_i ) < \eta \}$
satisfies
\begin{align}\label{E-density SBP}
\mu (B_{i,j,c} ) < \frac{\eps}{4\sum_{i\in I} \sum_{j=1}^J |C_{i,j} |}  
\end{align}
for all $\mu\in M_G (X)$. Set $W_{i,j,c} = cV_i \setminus B_{i,j,c}$, which is closed. 
By Urysohn's lemma there is a continuous function $g_{i,j,c} : X \to [0,1]$ which is zero on the complement of $cV_i$
and one on $W_{i,j,c}$.
Given $1\leq j\leq J$ and $c\in C_{i,j}$ and
writing $\alpha_t$ for the automorphism of $C(X)$ that composes functions
with the transformation $x\mapsto t^{-1} x$, 
we then define the function
\begin{align*}
\tilde{g}_{i,j,c} &= \sum_{q=0}^Q \sum_{t\in T_{j,q}} \frac{q}{Q} \alpha_t (g_{i,j,c} ) .
\end{align*}
Now set
\begin{align*}
f_1 = \sum_{i\in I} \sum_{j=1}^J \sum_{c\in C_{i,j,\sigma}^{(1)}} \tilde{g}_{i,j,c} , 
\hspace*{8mm}
f_2 = \sum_{i\in I} \sum_{j=1}^J \sum_{c\in C_{i,j,\sigma}^{(2)}} \tilde{g}_{i,j,c} .
\end{align*}
For every $s\in L$ we have
\begin{align*}
u_s \tilde{g}_{i,j,c} u_s^{-1} - \tilde{g}_{i,j,c}
= \sum_{q=0}^Q \sum_{t\in T_{j,q}} \frac{q}{Q} \alpha_{st} (g_{i,j,c} )
-  \sum_{q=0}^Q \sum_{t\in T_{j,q}} \frac{q}{Q} \alpha_t (g_{i,j,c} )
\end{align*}
and so the function $u_s \tilde{g}_{i,j,c} u_s^{-1} - \tilde{g}_{i,j,c}$ has norm at most 
$1/Q$ by (\ref{E-one}) and (\ref{E-one 2}). Since these functions for different $i$, $j$ and $c$
have pairwise disjoint supports, we deduce that
\begin{align*}
\| u_s f_k u_s^{-1} - f_k \| \leq \frac{1}{Q} < \eps 
\end{align*}
for $k=1,2$, verifying the requirement (\ref{E-central 2}).

Finally, we check (\ref{E-half 2}). For $k=1,2$ we set 
\begin{align*}
Z_k = \bigsqcup_{i\in I} \bigsqcup_{j=1}^J \bigsqcup_{\sigma\in\cP^{T_j}} 
\bigsqcup_{c\in C_{i,j,\sigma}^{(k)}} T_j cV_i , \hspace*{7mm}
\tilde{Z}_k = \bigsqcup_{i\in I} \bigsqcup_{j=1}^J \bigsqcup_{\sigma\in\cP^{T_j}} 
\bigsqcup_{c\in C_{i,j,\sigma}^{(k)}} T_j cW_{i,j,c} .
\end{align*}
Let $A\in\cP$, $\mu\in M_G (X)$, and $k\in \{ 1,2 \}$.
Note first that $\mu (A \cap Z_1 ) = \mu (A\cap Z_2 )$ as we chose the sets
$C_{i,j,\sigma}^{(1)}$ and $C_{i,j,\sigma}^{(2)}$ to always have equal cardinality.
Thus
\begin{align}\label{E-ind}
\mu (f_k \unit_A )
\leq \mu (A\cap Z_k )
= \frac12 \mu (A\cap (Z_1 \sqcup Z_2 ))
\leq \frac12 \mu (A) .
\end{align}
Next set $R_0 = \bigsqcup_{i\in I} \bigsqcup_{j=1}^J \bigsqcup_{\sigma\in \cP^{T_j}}
\bigsqcup_{c\in C_{i,j,\sigma}^{(0)}} T_j c V_i$
and observe that for every $i\in I$ we have
\begin{align*}
\bigg| \bigsqcup_{j=1}^J \bigsqcup_{\sigma\in\cP^{T_j}} 
\bigsqcup_{c\in C_{i,j,\sigma}^{(0)}} T_j c \bigg|
\leq JM|\cP |^M
\leq \frac{\eps}{6} |S_i |
\end{align*}
and hence $\mu (R_0 ) \leq \eps /6$. Therefore
\begin{align*}
\mu (X\setminus (Z_1 \sqcup Z_2 )) 
&\leq \mu (R_0 ) + \mu \Big(\bigsqcup_{i\in I} (S_i \setminus S_i' )V_i \Big) 
+ \mu \Big( X\setminus \bigsqcup_{i\in I} S_i V_i \Big) \\
&\leq \frac{\eps}{6} + \frac{\eps}{6} + \frac{\eps}{6} \ \leq \ \frac{\eps}{2}
\end{align*}
and so
\begin{align*}
\mu (A\cap Z_k )
= \frac12 \mu (A\cap (Z_1 \sqcup Z_2 )) 
\geq \frac12 \mu (A) - \frac{\eps}{2} .
\end{align*}
Since the set
$R_k = \bigsqcup_{i\in I} \bigsqcup_{j=1}^J \bigsqcup_{\sigma\in \cP^{T_j}} 
\bigsqcup_{c\in C_{i,j,\sigma}^{(k)}} (T_j \setminus T_{j,Q} )c V_i$
satisfies $\mu (R_k ) \leq \eps /4$ by (\ref{E-core}), and we have
$\mu (Z_k \setminus \tilde{Z}_k ) < \eps /4$ by (\ref{E-density SBP}), it follows that
\begin{align*}
\mu (f_k \unit_A )
\geq \mu ((A\cap\tilde{Z}_k )\setminus R_k ) 
&\geq \mu (A\cap Z_k ) - \mu (Z_k \setminus \tilde{Z}_k ) - \mu (R_k ) \\
&\geq \frac12 \mu (A) - \eps .
\end{align*}
Combined with (\ref{E-ind}) 
this yields $|\mu (f_k \unit_A ) - \frac12 \mu (A) | < \eps$, verifying (\ref{E-half 2}).
This finishes the proof.
\end{proof}

In view of \cite{Casetal18} we deduce the following corollary.

\begin{corollary}\label{C-TW}
The Toms--Winter conjecture holds for the simple crossed product C$^*$-algebras 
in the statement of Theorem~\ref{T-Gamma}.
\end{corollary}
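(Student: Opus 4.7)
The plan is to combine Theorem~\ref{T-Gamma} with the main result of \cite{Casetal18} in a direct way. First I would verify that the crossed products in question fall within the scope of the Toms--Winter conjecture as formulated in \cite{Casetal18}. For a free minimal action $G \curvearrowright X$ of a countably infinite amenable group on a compact metrizable space, the crossed product $C(X) \rtimes G$ is simple (by freeness and minimality), unital, separable, and nuclear (since amenability of $G$ makes the reduced and full crossed products coincide and ensures nuclearity). Moreover, $C(X) \rtimes G$ has tracial states (they arise from the invariant probability measures in $M_G(X)$, which is nonempty by amenability) and is infinite-dimensional provided $X$ is infinite, which is automatic when $G$ is infinite and the action is free.

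Next I would apply Theorem~\ref{T-Gamma}, which yields that $C(X) \rtimes G$ has uniform property $\Gamma$ as soon as the action is free and has the small boundary property. The hypothesis of Corollary~\ref{C-TW} (via its reference to Theorem~\ref{T-Gamma}) supplies exactly this.

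Finally, the main result of \cite{Casetal18} asserts that for simple separable infinite-dimensional unital nuclear C$^*$-algebras with uniform property $\Gamma$, the three conditions of the Toms--Winter conjecture — finite nuclear dimension, $\cZ$-stability, and strict comparison — are mutually equivalent. Combining this with the preceding two steps immediately yields the corollary. I do not anticipate any obstacle here beyond ensuring that all standing hypotheses of the cited classification-style result are satisfied; the content of the corollary is entirely a packaging of Theorem~\ref{T-Gamma} with \cite{Casetal18}.
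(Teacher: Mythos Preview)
Your proposal is correct and matches the paper's own approach, which is simply the one-line remark ``In view of \cite{Casetal18} we deduce the following corollary.'' You have merely made explicit the routine verification that the crossed products in question are simple, separable, unital, nuclear, infinite-dimensional, and have tracial states, so that the main result of \cite{Casetal18} applies once Theorem~\ref{T-Gamma} supplies uniform property~$\Gamma$.
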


We expect that the conditions in the Toms--Winter conjecture are actually satisfied
by all of the simple crossed products in the statement of Theorem~\ref{T-Gamma}.
This is known to be the case when $G=\Zb$ by the work of Elliott and Niu \cite{EllNiu17}.

\begin{remark}
It is worth pointing out that the proof of Theorem~\ref{T-Gamma}
verifies a version of uniform property $\Gamma$ 
for the Cartan pair given by the inclusion $C(X) \subseteq C(X)\rtimes G$.
In other words, the elements witnessing uniform property $\Gamma$ for the crossed products in Theorem~\ref{T-Gamma} are actually coming from the diagonal subalgebra $C(X)$.

Stuart White has pointed out to us that the analogous property for Cartan pairs in von Neumann algebras has been studied before, and in fact played a crucial role in the first example of a II$_1$-factor with two nonconjugate Cartan subalgebras, due to Connes and Jones \cite{ConnesJones82}.
In particular, it is known in the von Neumann algebra context
that property $\Gamma$ for a Cartan pair can fail even if the ambient algebra itself has property $\Gamma$.

This gives rise to the speculation that uniform property $\Gamma$ for the C$^*$-algebraic Cartan pair $C(X)\subseteq C(X)\rtimes G$ may be closely tied to the small boundary property of the action, whereas we would expect that uniform property $\Gamma$ for the crossed product C$^*$-algebra alone may hold without any assumptions on the action used to construct it besides freeness.
\end{remark}

\end{document}